\newtheorem{theorem}{Theorem}[section]
\newtheorem{lemma}[theorem]{Lemma}
\newtheorem{proposition}[theorem]{Proposition}
\newtheorem{definition}{Definition}[section]
\newtheorem{remark}[theorem]{Remark}
\newcommand{\R}{\mathbb{R}}
\newcommand{\T}{\top}
\DeclareMathOperator*{\argmax}{argmax}
\begin{document}
	\title{An Optimal High-Order Tensor Method for Convex Optimization}

\author{
Bo JIANG
\thanks{Research Institute for Interdisciplinary Sciences, School of Information Management and Engineering, Shanghai University of Finance and Economics, Shanghai 200433, China. Email: isyebojiang@gmail.com. Research of this author was supported in part by NSFC Grants 11771269 and 11831002, and
Program for Innovative Research Team of Shanghai University of Finance and Economics.} \and
Haoyue WANG
\thanks{Operations Research Center, Massachusetts Institute of Technology, Cambridge, MA 02139, USA, (email: haoyuew@mit.edu)} \and
Shuzhong ZHANG
\thanks{Department of Industrial and Systems Engineering, University of Minnesota, Minneapolis, MN 55455, USA (email: zhangs@umn.edu); joint appointment with
Institute of Data and Decision Analytics, The Chinese University of Hong
Kong, Shenzhen, China (email: zhangs@umn.edu).
Research of this author was supported in part by the National Science
Foundation (Grant CMMI-1462408). %, and in part by Shenzhen Research Fund under Grant No.\ KQTD2015033114415450.
}}

\maketitle

\begin{abstract}

This paper is concerned with finding an {\em optimal}\/ algorithm for minimizing a composite convex objective function. The basic setting is that the objective is the sum of two convex functions: the first function is smooth with up to the $d$-th order derivative information available, and the second function is possibly non-smooth, but its proximal tensor mappings can be computed approximately in an efficient manner. The problem is to find -- in that setting -- the best possible (optimal) iteration complexity for convex optimization. Along that line, for the smooth case (without the second non-smooth part in the objective) Nesterov proposed (\cite{Nesterov-1983-Accelerated}, 1983) an optimal algorithm for the first-order methods ($d=1$) with iteration complexity $O\left( 1 / k^2 \right)$, { while high-order tensor algorithms (using up to general $d$th order tensor information) with iteration complexity $O\left( 1 / k^{d+1} \right)$ were recently established in \cite{Baes-2009, Nesterov-2018}.} In this paper, we propose a new high-order tensor algorithm for the general composite case, with the iteration complexity of $O\left( 1 / k^{(3d+1)/2} \right)$, which matches the lower bound for the $d$-th order methods as established in \cite{Nesterov-2018, Shamir-2017-Oracle}, and hence is optimal. Our approach is based on the {\em Accelerated Hybrid Proximal Extragradient}\/ (A-HPE) framework proposed by Monteiro and Svaiter in \cite{Monteiro-Svaiter-2013}, where a bisection procedure is installed for each A-HPE iteration. At each bisection step a proximal tensor subproblem is approximately solved, and the total number of bisection steps per A-HPE iteration is shown to be bounded by a logarithmic factor in the precision required.

\vspace{1cm}

\noindent {\bf Keywords:} convex optimization; tensor method; acceleration; iteration complexity. %; optimal algorithm.

\vspace{1cm}

\noindent {\bf Mathematics Subject Classification:} 90C60, 90C25, 47H05

\end{abstract}

\newpage

d\section{Introduction}

In this paper, we consider the following composite unconstrained
convex optimization:
\begin{equation}\label{Prob:main}
\min_{x\in \mathbb{R}^n} F(x):=f(x)+h(x),
\end{equation}
where $f$ is differentiable and convex, and $h$ is convex but possibly non-smooth. In this context, we assume that convex tensor (polynomial) proximal mappings regarding $h$ can be approximately computed efficiently.
Given that structure, a fundamental quest is to find an {\it optimal}\/ algorithm that solves the above problem, using the available derivative information of the smooth part $f$.

In case $F(x)=f(x)$, and only the gradient information of $f$ is available, Nesterov  \cite{Nesterov-1983-Accelerated} proposed a gradient-type algorithm, which achieves the overall iteration complexity of $O(1/k^2)$, matching the lower bound on the iteration complexity of this class of solution methods, hence is known to be an {\em optimal}\/ algorithm among all the first-order methods. Since Nesterov's seminal work \cite{Nesterov-1983-Accelerated}, especially in the recent years when the large scale machine learning applications have come under the spotlight, there has been a surge of research effort to extend Nesterov's approach to more general settings; see e.g.~\cite{Beck-2009-Fast, Cotter-2011-Better, Lan-2012-Optimal, Drori-2014-Performance, Shalev-2014-Accelerated}, and/or to incorporate certain adaptive strategies to enhance the practical performances of the acceleration; see e.g.~\cite{Lin-2014-Adaptive, Scheinberg-Goldfarb-Bai-2014, Calatroni-Chambolle-2017}. At the same time, there has also been a considerable research effort to fully understand the underpinning mechanism of the first-order acceleration phenomenon; see e.g.~\cite{Bubeck-2015-Geometric, Su-2016-Differential, Wibisono-2016-Variational, Wilson-2016-Lyapunov}.

When the Hessian information is available, Nesterov \cite{Nesterov-2008-Accelerating} proposed an acceleration scheme for cubic regularized Newton's method, and he showed that the iteration complexity bound improves from $O\left( 1 / k^2 \right)$ to $O\left( 1 / k^3 \right)$. A few years later,
Monteiro and Svaiter \cite{Monteiro-Svaiter-2013} proposed a totally
different acceleration scheme, which they termed as {\em Accelerated Hybrid Proximal Extragradient Method}\/ (A-HPE) framework, and they proved that if the second-order information is incorporated into the A-HPE framework then the corresponding accelerated Newton proximal extragradient method
has a superior iteration complexity bound of $O\left( 1 / k^{7/2} \right)$ over $O\left( 1 / k^3 \right)$.
%However, whether a faster second-order method exists is unknown until last
In 2018, Arjevani, Shamir and Shiff \cite{Shamir-2017-Oracle} showed that $O\left( 1 / k^{7/2} \right)$ is actually a lower bound for the oracle complexity of the second-order methods for convex smooth optimization. This shows that the accelerated Newton proximal extragradient method is an optimal second-order method.

%As we can see that by taking advantage of the higher order information, the associated algorithms should have lower iteration complexity.
As evidenced by the special cases $d=1$ and $d=2$, there is a clear tradeoff between the level of derivation information required and the overall iteration complexity improved.
Therefore, a natural and important question arises:
\begin{quote}
	{\bf What is the {\em exact}\/ tradeoff relationship between $d$ and the worst-case iteration complexity?}
\end{quote}
Such question has been in fact raised and addressed in some way in recent works \cite{Birgin-Gardenghi-Martinez-Santos-Toint-2017, Cartis2017, Cartis-Gould-Toint-2018, MARTINEZ2017} in the context of nonconvex optimization. For convex optimization, {the accelerated cubic regularized Newton method was generalized to
	the general high-order case \cite{Baes-2009, Nesterov-2018} with the iteration complexity being $O\left( 1 / k^{d+1} \right)$, where $d$ is the order of derivative information used in the algorithm.
} Jiang, Lin and Zhang \cite{Jiang-2017-Unified} extended Nesterov's approach to accommodate the composite optimization \eqref{Prob:main} and relaxed the requirement on the knowledge of problem parameters such as the Lipschitz constants and the requirement on the exact solutions of the subproblems while maintaining the same iteration bound as in \cite{Baes-2009, Nesterov-2018}.
Along the line of bounding the worst case iteration complexity using up to the $d$-th order derivative information, there have also been significant progresses as well.
Arjevani, Shamir and Shiff~\cite{Shamir-2017-Oracle} showed that the worst case iteration complexity of any algorithm in that setting cannot be better than $O\left( 1 / k^{(3d+1)/2} \right)$. A simplified analysis of the bound can be found in Nesterov~\cite{Nesterov-2018}. So, there was a gap between the achieved iteration bound $O\left( 1 / k^{d+1} \right)$ and the best possible bound of $O\left( 1 / k^{(3d+1)/2} \right)$. Clearly at least one of the two bounds is improvable.
%Unfortunately, such iteration  bound does not match the lower bound for $d$-th order method, which was first obtained in \cite{Shamir-2017-Oracle} and later in \cite{Nesterov-2018}.
In this paper, we aim to
%fill in the gap by presenting an optimal high-order tensor method with an iteration complexity of $O\left( 1 / k^{(3d+1)/2} \right)$.
settle the above theoretical quest by providing a new implementable algorithm whose iteration complexity is precisely $O\left( 1 / k^{(3d+1)/2} \right)$.
As a result, the tradeoff relationship discussed above is pinned down to be exactly $O\left( 1 / k^{(3d+1)/2} \right)$.

Our algorithm is based on the A-HPE framework of Monteiro and Svaiter \cite{Monteiro-Svaiter-2013}, which is presented as Algorithm \ref{A-HPE framework} in this paper. In fact, our algorithm specifies a way to generate an approximate solution through the use of high order derivative information by Taylor expansion. In each iteration, such approximate solution is computed by means of a bisection process. At each bisection step, a regulated convex tensor (polynomial) optimization subproblem is approximately solved. Moreover, we show that, to implement one A-HPE iteration, the number of bisection steps -- each calling to solve a convex tensor subproblems -- is upper bounded by a logarithmic factor in the inverse of the required precision.
Our bisection procedure is similar to the one proposed in \cite{Monteiro-Svaiter-2013} for the case $d=2$; however, a key modification is applied which enables the removal of the so-called ``bracketing stage'' used in \cite{Monteiro-Svaiter-2013}.
{
	 After submitting the first version of the paper, we became aware of two other independent works \cite{Gasnikov-2018-Optimal,Bubeck-2018-Near-optimal} establishing similar iteration bounds as ours, with the main difference being that the focus of \cite{Gasnikov-2018-Optimal,Bubeck-2018-Near-optimal} is on the smooth case: $F(x) = f(x)$, while our method accommodates a composite objective function. The common theoretical development by the three groups was subsequently jointly announced %in an abstract 
	in the form of abstract at Conference on Learning Theory (COLT) \cite{Gasnikov-etal-2019}. It is also worth mentioning that other than the afore-mentioned three papers there are some other related works on high-order optimization methods  \cite{Bullins-2018,Alves-2014-primaldual,Alves-2016-Iteration} based on large-step A-HPE framework.
}

The rest of the paper is organized as follows. In Section \ref{Sec:pre}, we introduce some preliminaries
including the assumptions and the high-order oracle model used throughout this paper. Then we present our optimal tensor method and its iteration complexity analysis in Section \ref{Sec:alg}. The line search subroutine being used in the main procedure of our optimal tensor method is presented and analyzed in
Section \ref{Line search}. Finally, some technical proofs and lemmas are provided in the appendix.

\section{Preliminaries}\label{Sec:pre}

\subsection{Notations}
We denote $\nabla^d f(x)$ to be the  $d$-th order derivative tensor at point $x$ of function $f$ with the $(i_1,...,i_d)$ component given as:
\begin{equation*}
\nabla^d f(x)_{i_1,\dots,i_d}  = \frac{\partial^d f}{\partial x_{i_1}\cdots\partial x_{i_d}} (x),\; \forall 1 \le \; i_1,...,i_d \le n.
\end{equation*}
Given a $d$-th order tensor $\mathcal{T}$ and vectors $z^1,\dots ,z^d \in \mathbb{R}^n $, we denote
\begin{equation*}
\mathcal{T}[z^1,\dots,z^d] := \sum_{i_1,\dots,i_d=1}^{n} \mathcal{T}_{i_1,\dots,i_d} z^{1}_{i_1} \dots z^{d}_{i_d}.
\end{equation*}
The operator norm associated with $\mathcal{T}$ is defined as:
\begin{equation*}
\|\mathcal{T}\| := \max\limits_{\|z^i\|= 1,\,i=1,...,d}  \mathcal{T}[z^1,...,z^d].
\end{equation*}
For given $z^{k+1},\dots,z^{d}$, $\mathcal{T}[z^{k+1},\dots,z^{d}]$ is a $k$-th order tensor with the associated $(i_1,\cdots, i_k)$ component defined as:
$$
\mathcal{T}[z^{k+1},\dots,z^{d}]_{i_1,\cdots, i_k} := \sum_{i_{k+1},\dots,i_{d}=1}^{n} \mathcal{T}_{i_1,\dots,i_{k},i_{k+1},\dots,i_d} z^{k+1}_{i_{k+1}} \dots z^{d}_{i_{d}}
$$
for $1 \le i_1,..., i_k \le n$.
Denote
$$(z^{1}_*,\dots,z^{k}_*) := \argmax\limits_{\|y^i\| = 1,\, i=1,...,k} \left(\mathcal{T}[z^{k+1},...,z^d]\right)[y^1,...,y^k].$$
One has
\begin{equation}\label{tensor-norm-bound}
\|\mathcal{T}[z^{k+1},...,z^d] \| = \mathcal{T}[z^{1}_*,\dots,z^{k}_*,z^{k+1},...,z^d] \le \|\mathcal{T} \|  \| z^{k+1}\| \cdots \| z^{d}\|.
\end{equation}
As a matter of convention, for quantities $x$ and $y$, we use the notation $y=\Theta(x)$ to indicate the relation that there are positive constants $a$ and $b$ such that $a x \le  y \le b x$. If $a$ is absent, then we shall indicate the relation as $y=O(x)$.

\subsection{High-Order Oracle Model and Regularized Tensor Approximation}
In this paper, we consider the following high-order oracle model and the algorithm we are going to propose is such oracle model.

\smallskip

\begin{center}
	\fbox {\begin{minipage}{5in}
			{\bf $d$-th Order Oracle Model}
			
			\begin{itemize}
				\item $f$ is $d$ times Lipschitz-continuous and differentiable
				with Lipschitz constant $L_d$ for $d$-th order derivative tensor; i.e.
				\begin{equation}\label{Lipschitz-continuous}
				\|\nabla^{d}f(x)-\nabla^{d}f(y) \| \leq L_d \|x-y\|  \ \ \ \forall x,y \in \mathbb{R}^n,
				\end{equation}
				where the left side is the $d$-th order tensor operator norm.
				\item Given any $x$, the oracle returns $f(x), \nabla f(x), \nabla^2 f(x), ... , \nabla^d f(x) $.
				\item At iteration $k$, $x_k$ is generated from a deterministic function $h$ %$\partial h(\cdot)$
				and the oracle's responses at any linear combination of
				$x_1, x_2, ... , x_{k-1}$ and $\nabla^i f(x_1),\nabla^i f(x_2), ...,\nabla^i f(x_{k-1})$, where $1\le i \le d$.
				
			\end{itemize}
			
	\end{minipage}}
\end{center}

Recall that the exact proximal minimization at point $x$ with stepsize $\lambda>0$ is defined as
\begin{equation}\label{exact proximal}
\min\limits_{y\in \mathbb{R}^n} \  f(y) +h(y) +\frac{1}{2 \lambda} \|y-x\|^2 .
\end{equation}
To utilize all the derivative information, we consider the regularized tensor approximation of $f(y)$ at point $x$:
\begin{equation}\label{g_x}
f_x (y) := f(x) + \nabla f(x)^\T (y-x) + \frac{1}{2} \nabla^2 f(x)[y-x]^2 + \cdots + \frac{1}{d!} \nabla^d f(x) [y-x]^d + \frac{M}{(d+1)!}\|y-x\|^{d+1},
\end{equation}
where $M>0$ is the parameter of the high-order regularization term $\|y-x\|^{d+1}$.
Then, by \eqref{Lipschitz-continuous} and the Taylor expansion, we can bound the gap between $f_x(\cdot)$ and $f(\cdot)$ for any $x$ (see Nesterov~\cite{Nesterov-2018}):
\begin{lemma}\label{Fun-Gap-Bound}
	For every $x,y \in \mathbb{R}^n$,
	\begin{equation*}
	\|\nabla f(y)-\nabla f_x(y)\| \leq \frac{L_d+M}{d!} \|y-x\|^d .
	\end{equation*}
\end{lemma}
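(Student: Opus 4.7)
The plan is to split the quantity $\nabla f(y)-\nabla f_x(y)$ into two pieces via the Taylor polynomial of $\nabla f$ at $x$ and bound each separately: one piece is purely the high-order regularization term $\tfrac{M}{(d+1)!}\|y-x\|^{d+1}$ differentiated, and the other is the Taylor remainder of $\nabla f$, which is controlled by the Lipschitz assumption \eqref{Lipschitz-continuous} on $\nabla^d f$.

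First I would compute $\nabla f_x(y)$ termwise from the definition \eqref{g_x}. The derivative of $\tfrac{1}{k!}\nabla^k f(x)[y-x]^k$ with respect to $y$ equals $\tfrac{1}{(k-1)!}\nabla^k f(x)[y-x]^{k-1}$, and the derivative of the regularizer equals $\tfrac{M}{d!}\|y-x\|^{d-1}(y-x)$. Setting
\[
T_{d-1}(y;x) := \nabla f(x) + \nabla^2 f(x)(y-x) + \cdots + \tfrac{1}{(d-1)!}\nabla^d f(x)[y-x]^{d-1},
\]
one obtains $\nabla f_x(y)-T_{d-1}(y;x) = \tfrac{M}{d!}\|y-x\|^{d-1}(y-x)$, whose norm is exactly $\tfrac{M}{d!}\|y-x\|^d$.

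Next I would apply Taylor's theorem with integral remainder to $\phi(t):=\nabla f(x+t(y-x))$, expanding around $t=0$ up to order $d-2$. Since $\phi^{(d-1)}(t)=\nabla^d f(x+t(y-x))[y-x]^{d-1}$, this gives
\[
\nabla f(y)-T_{d-1}(y;x) = \int_0^1 \tfrac{(1-t)^{d-2}}{(d-2)!}\bigl(\nabla^d f(x+t(y-x))-\nabla^d f(x)\bigr)[y-x]^{d-1}\,dt,
\]
where I have used $\int_0^1\tfrac{(1-t)^{d-2}}{(d-2)!}\,dt=\tfrac{1}{(d-1)!}$ to absorb the term that would otherwise contain $\nabla^d f(x)[y-x]^{d-1}$. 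Taking norms inside the integral, applying the tensor bound \eqref{tensor-norm-bound}, and then invoking the Lipschitz hypothesis \eqref{Lipschitz-continuous} yields
\[
\|\nabla f(y)-T_{d-1}(y;x)\| \le L_d\|y-x\|^d\int_0^1 \tfrac{(1-t)^{d-2}t}{(d-2)!}\,dt = \tfrac{L_d}{d!}\|y-x\|^d,
\]
using the Beta integral $\int_0^1(1-t)^{d-2}t\,dt=\tfrac{(d-2)!}{d!}$.

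Finally I would combine the two estimates via the triangle inequality:
\[
\|\nabla f(y)-\nabla f_x(y)\| \le \|\nabla f(y)-T_{d-1}(y;x)\| + \|T_{d-1}(y;x)-\nabla f_x(y)\| \le \tfrac{L_d+M}{d!}\|y-x\|^d,
\]
which is the claim. There is no real obstacle here; the only subtlety is choosing the correct order of Taylor expansion (stopping at $\phi^{(d-2)}$ so that the remainder involves $\nabla^d f$, the highest derivative assumed Lipschitz, rather than a nonexistent $\nabla^{d+1}f$) and computing the Beta-function constant cleanly.
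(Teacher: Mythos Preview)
Your proof is correct. The paper does not actually provide its own proof of this lemma; it simply states the result with a citation to Nesterov~\cite{Nesterov-2018}, so there is no in-paper argument to compare against. Your decomposition into the Taylor remainder of $\nabla f$ (controlled by the Lipschitz bound on $\nabla^d f$) plus the gradient of the regularizer, followed by the Beta-integral computation, is exactly the standard derivation one finds in Nesterov's work. One minor remark: your integral-remainder formula as written requires $d\ge 2$ (because of the $(d-2)!$), but the case $d=1$ is immediate from the Lipschitz assumption on $\nabla f$ itself, so this is not a gap.
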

Therefore, it is natural to consider the tensor approximation of \eqref{exact proximal}:
\begin{equation}\label{inexact proximal}
\min\limits_{y\in \mathbb{R}^n} \  f_x(y) +h(y) +\frac{1}{2 \lambda} \|y-x\|^2 .
\end{equation}
{
	In fact, \eqref{inexact proximal} is the subproblem to be solved in the Optimal Tensor Method that will be introduced later. Note that similar subproblems have appeared in \cite{Monteiro-Svaiter-2013} and \cite{Nesterov-2018}. Specifically, the one used in \cite{Monteiro-Svaiter-2013} corresponds to $d=2$ in \eqref{inexact proximal} without
	the term involving $\|y-x\|^{d+1}$ (i.e., $M=0$), while \cite{Nesterov-2018} uses the subproblem that only minimizes $f_x(y) $ (i.e., without the nonsmooth term $h(y)$ and the quadratic regularization term $\frac{1}{2 \lambda} \|y-x\|^2$).
	In contrast, our above subproblem installs both the high-order and quadratic regularization terms.
}

Note that the unique solution $y$ of \eqref{inexact proximal} is characterized by the following optimality condition:
\begin{equation}\label{inexact proximal opt cond.}
u \in  (\nabla f_x + \partial h)(y),\qquad \lambda u + y -x =0.
\end{equation}
For a scalar $\epsilon \ge 0$, the $\epsilon$-subdifferential of a proper closed convex function $h$ is defined as:
$$
\partial_{\epsilon} h (x) := \{ u \;|\;h(y) \ge h(x) + \langle y-x, u \rangle - \epsilon,
\; \forall\; y \in \mathbb{R}^n  \}.
$$
With the above notion in mind, let us consider the following approximate solution for \eqref{inexact proximal opt cond.} (hence \eqref{inexact proximal}).
\begin{definition}\label{Newton approximate def}
	Given $(\lambda,x) \in \mathbb{R}_{++}\times \mathbb{R}^n$ and $\hat{\sigma}\ge 0$, the triplet $(y,u,\epsilon) \in \mathbb{R}^n\times\mathbb{R}^n\times\mathbb{R}_+ $ is called a $\hat{\sigma}$-approximate solution of (\ref{inexact proximal}) at $(\lambda,x)$ if
	\begin{equation}\label{Appro-Sol}
	u \in (\nabla f_x +\partial_{\epsilon} h)(y) \ \ \mbox{ and } \ \ \|\lambda u+y-x\|^2+2\lambda \epsilon \leq \hat{\sigma}^2\|y-x\|^2.
	\end{equation}
\end{definition}
Obviously, if $(y, u)$ is the solution pair of \eqref{inexact proximal opt cond.}, then $(y, u, 0)$ is a $\hat{\sigma}$-approximate solution of (\ref{inexact proximal}) at $(\lambda,x)$ for any $\hat \sigma \ge 0$. In the rest of our analysis, we assume the availability of a subroutine which, for given $(\lambda,x)$ and $\hat{\sigma}> 0$, returns  a $\hat{\sigma}$-approximate solution $(y,u,\epsilon)$. Let us call this subroutine {\bf ATS} { (Approximate Tensor Subroutine).
	Different from \cite{Nesterov-2018}, where a similar subproblem as \eqref{inexact proximal} without a possible nonsmooth function $h(\cdot)$ and regularization term $\frac{1}{2 \lambda}\|y - x\|^2$ is exactly solved, we only assume an approximate solution in the form of 
	\eqref{Appro-Sol} is available and no further assumption on $h(\cdot)$ is required. Note that the possibly nonsmooth function $h(\cdot)$ can be viewed as a fixed parameter in {\bf ATS}. Once $h(\cdot)$ is given,  
	{\bf ATS} could be called in each step of the bisection search, which itself is a subroutine in the main procedure of our algorithm.
}

%In the case $F(x)=f(x)$, {\bf ATS} is invoked in every iteration of the algorithm proposed by Nesterov \cite{Nesterov-2018}. In this paper, we consider the general composite case $F(x)=f(x)+h(x)$, and a proximal version of {\bf ATS} is called in each step of the bisection search, which itself is a subroutine in the main procedure of our algorithm.

\section{The Optimal Tensor Method} \label{Sec:alg}

\subsection{The tensor algorithm and its iteration complexity}
Our bid to the optimal tensor algorithm is based on the so-called {\em Accelerated Hybrid Proximal Extragradient}\/ (A-HPE) framework proposed by Monteiro and Svaiter \cite{Monteiro-Svaiter-2013} for problem \eqref{Prob:main}, whose main steps can be schematically sketched below:

\begin{algorithm}[H]
	\caption{A-HPE framework}
	\label{A-HPE framework}
	\begin{algorithmic}
		\STATE \textbf{STEP 1.} Let $x_0,y_0 \in \mathbb{R}^n$, $0< \sigma<1 $ be given, and set $A_0=0$ and $k=0$.\vspace{0.2cm}
		\STATE \textbf{STEP 2.} If $0 \in \partial F(y_k)$, then \textbf{STOP}. \vspace{0.2cm}
		\STATE \textbf{STEP 3.} Otherwise, find $\lambda_{k+1}>0$ and a triplet $(\tilde{y}_{k+1},v_{k+1},\epsilon_{k+1})$ such that
		\begin{equation}\label{triple relation 1}
		v_{k+1} \in \partial_{\epsilon_{k+1}} F(\tilde{y}_{k+1}),
		\end{equation}
		\begin{equation}\label{triple relation 2}
		\|\lambda_{k+1} v_{k+1} +\tilde{y}_{k+1} -\tilde{x}_k \|^2 +2 \lambda_{k+1}\epsilon_{k+1}
		\leq \sigma^2 \|\tilde{y}_{k+1} -\tilde{x}_k\|^2
		\end{equation}
		\STATE where
		\begin{eqnarray}\label{tilde x}
		\tilde{x}_k &=&\frac{A_k}{A_k+a_{k+1}} y_k + \frac{a_{k+1}}{A_k + a_{k+1} } x_k, \nonumber\\
		a_{k+1} &=& \frac{\lambda_{k+1}+\sqrt{\lambda_{k+1}^2 + 4 \lambda_{k+1} A_k}}{2} \label{a_k+1}. \nonumber
		\end{eqnarray}
		
		\STATE \textbf{STEP 4.} Choose $y_{k+1}$ such that $F(y_{k+1}) \leq F(\tilde{y}_{k+1})$ and let
		\begin{eqnarray}\label{A_k+1}
		A_{k+1} &= &A_k + a_{k+1}, \nonumber \\
		x_{k+1}& =& x_k -a_{k+1} v_{k+1}  \label{x_k+1}. \nonumber
		\end{eqnarray}
		
		\STATE \textbf{STEP 5.} Set $k\leftarrow k+1$, and go to STEP 2.
	\end{algorithmic}
\end{algorithm}

{ 
	Note that in STEP 2, the stopping condition is $0 \in \partial F(y_k)$. However, in practice, the condition is replaced by an approximate version of it (see Algorithm \ref{Alg:Opt-High-Order} below).
}
In the following, we quote some technical results derived in \cite{Monteiro-Svaiter-2013} for A-HPE. Since our proposed algorithm is within that framework, the results in Lemma~\ref{convergence speed_general} hold true for our method as well, and they will be used in the subsequent analysis.

\begin{lemma}\label{convergence speed_general}
	Suppose the sequence $\{x_k, y_k, \tilde{x}_k, \tilde{y}_k
	\}$	is genernated from Algorithm \ref{A-HPE framework}.
	Let {$x_*$} be the projection of $x_0$ onto the set of optimal value points $X_*$, $F_*$ be the optimal value, and $D$ be the distance from $x_0$ to $X_*$. Then for any 	integer $k \geq 1$, it holds that (Theorem 3.6 in \cite{Monteiro-Svaiter-2013}),
	\begin{equation}\label{Bound: Ak-f-yk-D}
	\frac{1}{2} \|{x_*}-x_k \|^2 + A_k(F(y_k)-F_{*})+\frac{1-\sigma^2}{2}\sum^{k}_{j=1} \frac{A_j}{\lambda_j} \|\tilde{y}_j-\tilde{x}_{j-1} \|^2 \leq \frac{1}{2}D^2.
	\end{equation}
	Therefore,
	\begin{equation}\label{Bound: sum-Ak}
	\sum^{k}_{j=1} \frac{A_j}{\lambda_j} \|\tilde{y}_j-\tilde{x}_{j-1} \|^2 \leq \frac{D^2}{1-\sigma^2} .
	\end{equation}
	Furthermore, $A_k$ and $\lambda_k$ has the following relation (Lemma 3.7 in \cite{Monteiro-Svaiter-2013})
	\begin{equation}\label{relation of A and lambda}
	A_k \ge \frac{1}{4} \left(\sum_{j=1}^{k} \sqrt{\lambda_j}\right)^2.
	\end{equation}
	{ If $y_k$ is chosen as $\tilde y_k$ for all k,} the distance between $y_k$ and {$x_*$} can be bounded as follows (Theorem 3.10 in \cite{Monteiro-Svaiter-2013}),
	\begin{equation}\label{bound of yk ineq}
	\|y_k- {x_*}\| \leq \left( \frac{2}{\sqrt{1-\sigma^2}}+1\right)D.
	\end{equation}
\end{lemma}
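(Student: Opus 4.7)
The plan is to follow the Lyapunov/potential function strategy of Monteiro and Svaiter. Define the potential
$$\Phi_k \;=\; \frac{1}{2}\|x_k - x_*\|^2 \,+\, A_k\bigl(F(y_k)-F_*\bigr),$$
and establish the per-iteration descent
$$\Phi_{k+1} \,+\, \frac{1-\sigma^2}{2}\cdot\frac{A_{k+1}}{\lambda_{k+1}}\,\|\tilde{y}_{k+1}-\tilde{x}_k\|^2 \;\le\; \Phi_k.$$
Since $A_0=0$ and $\|x_0-x_*\|=D$, we have $\Phi_0=\tfrac{1}{2}D^2$; telescoping from $j=1$ to $k$ and using $F(y_{k+1})\le F(\tilde{y}_{k+1})$ (so the potential only drops further in STEP 4) immediately yields \eqref{Bound: Ak-f-yk-D}. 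Dropping the two nonnegative terms $\tfrac{1}{2}\|x_k-x_*\|^2$ and $A_k(F(y_k)-F_*)$ from the left-hand side of \eqref{Bound: Ak-f-yk-D} then gives \eqref{Bound: sum-Ak}.

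To obtain the descent itself, I would start from \eqref{triple relation 1}, which, via the definition of the $\epsilon$-subdifferential, yields the two inequalities $F(y_k)\ge F(\tilde{y}_{k+1})+\langle v_{k+1},y_k-\tilde{y}_{k+1}\rangle-\epsilon_{k+1}$ and $F_*\ge F(\tilde{y}_{k+1})+\langle v_{k+1},x_*-\tilde{y}_{k+1}\rangle-\epsilon_{k+1}$. Forming the convex combination with weights $A_k/A_{k+1}$ and $a_{k+1}/A_{k+1}$ and invoking the definition of $\tilde{x}_k$ to simplify, one rewrites $A_{k+1}(F(\tilde{y}_{k+1})-F_*)-A_k(F(y_k)-F_*)$ in terms of $a_{k+1}\langle v_{k+1},x_*-\tilde{y}_{k+1}\rangle$ plus an $\epsilon_{k+1}$ error. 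Next, the update $x_{k+1}=x_k-a_{k+1}v_{k+1}$ and the expansion $\|x_{k+1}-x_*\|^2=\|x_k-x_*\|^2-2a_{k+1}\langle v_{k+1},x_k-x_*\rangle+a_{k+1}^2\|v_{k+1}\|^2$, combined with the identity $a_{k+1}^2=\lambda_{k+1}A_{k+1}$ (a consequence of the definition of $a_{k+1}$), convert the inner products into squared norms. The HPE condition \eqref{triple relation 2} is then used to dominate the residual $\|\lambda_{k+1}v_{k+1}+\tilde{y}_{k+1}-\tilde{x}_k\|^2+2\lambda_{k+1}\epsilon_{k+1}$, which produces precisely the $(1-\sigma^2)$ factor on the dissipation term.

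The bound \eqref{relation of A and lambda} is purely algebraic. Squaring the definition of $a_{k+1}$ gives $a_{k+1}^2=\lambda_{k+1}(A_k+a_{k+1})=\lambda_{k+1}A_{k+1}$, so $(\sqrt{A_{k+1}}-\sqrt{A_k})(\sqrt{A_{k+1}}+\sqrt{A_k})=a_{k+1}=\sqrt{\lambda_{k+1}A_{k+1}}\ge \sqrt{\lambda_{k+1}}\cdot\tfrac{1}{2}(\sqrt{A_{k+1}}+\sqrt{A_k})$, hence $\sqrt{A_{k+1}}-\sqrt{A_k}\ge \tfrac{1}{2}\sqrt{\lambda_{k+1}}$; summing with $A_0=0$ gives $\sqrt{A_k}\ge \tfrac{1}{2}\sum_{j=1}^k\sqrt{\lambda_j}$. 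For \eqref{bound of yk ineq}, note that \eqref{Bound: Ak-f-yk-D} already implies $\|x_k-x_*\|\le D$; when $y_k=\tilde{y}_k$, a triangle inequality through $\tilde{x}_{k-1}$ (itself a convex combination of $y_{k-1}$ and $x_{k-1}$) together with Cauchy--Schwarz applied to $\sum_j \sqrt{A_j/\lambda_j}\cdot\|\tilde{y}_j-\tilde{x}_{j-1}\|$, and the normalization $a_j^2=\lambda_j A_j$, bounds the cumulative deviation by $2D/\sqrt{1-\sigma^2}$, adding to $\|x_*-x_*\|$ plus $D$ to produce the constant $\tfrac{2}{\sqrt{1-\sigma^2}}+1$.

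The main obstacle is the per-iteration Lyapunov descent: one must carefully combine the two $\epsilon$-subgradient inequalities with the $\|x_{k+1}-x_*\|^2$ expansion so that, after using $a_{k+1}^2=\lambda_{k+1}A_{k+1}$, the remaining cross term rearranges exactly into a multiple of $\|\lambda_{k+1}v_{k+1}+\tilde{y}_{k+1}-\tilde{x}_k\|^2+2\lambda_{k+1}\epsilon_{k+1}$, which is then killed by \eqref{triple relation 2} with the correct constant $(1-\sigma^2)$. The other items then drop out as essentially bookkeeping consequences of this single estimate and of the scalar recursion for $a_{k+1}$.
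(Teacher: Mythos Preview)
The paper does not prove this lemma at all; it simply quotes the four inequalities from \cite{Monteiro-Svaiter-2013} (Theorems~3.6, 3.10 and Lemma~3.7). Your sketch is therefore more than what the paper provides, and for \eqref{Bound: Ak-f-yk-D}, \eqref{Bound: sum-Ak}, \eqref{relation of A and lambda} it is exactly the standard Monteiro--Svaiter argument: the Lyapunov descent via $a_{k+1}^2=\lambda_{k+1}A_{k+1}$ collapses the cross terms into $\|\lambda_{k+1}v_{k+1}+\tilde y_{k+1}-\tilde x_k\|^2+2\lambda_{k+1}\epsilon_{k+1}$, and the scalar recursion $\sqrt{A_{k+1}}-\sqrt{A_k}\ge\tfrac12\sqrt{\lambda_{k+1}}$ follows from $a_{k+1}=\sqrt{\lambda_{k+1}A_{k+1}}$ and $A_{k+1}\ge A_k$.

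Your treatment of \eqref{bound of yk ineq} is garbled, however. The expression ``$\|x_*-x_*\|$'' is a slip, and ``a triangle inequality through $\tilde x_{k-1}$'' is not the correct mechanism: a direct triangle inequality would force you to bound $\|y_{k-1}-x_*\|$ and the induction does not close. The actual MS13 argument multiplies the HPE relation $\tilde y_j=\tilde x_{j-1}-\lambda_j v_j+e_j$ (with $\|e_j\|\le\sigma\|\tilde y_j-\tilde x_{j-1}\|$) by $A_j$, uses $A_j\tilde x_{j-1}=A_{j-1}y_{j-1}+a_j x_{j-1}$ and $A_j\lambda_j=a_j^2$ to obtain the \emph{telescoping identity}
\[
A_j(y_j-x_*)=A_{j-1}(y_{j-1}-x_*)+a_j(x_j-x_*)+A_j e_j,
\]
sums from $j=1$ to $k$ (with $A_0=0$), and then bounds $\sum_j a_j\|x_j-x_*\|\le DA_k$ and, via Cauchy--Schwarz exactly as you indicate (splitting $A_j=\sqrt{A_j\lambda_j}\cdot\sqrt{A_j/\lambda_j}$ and using $\sum a_j^2\le A_k^2$), the error sum by a multiple of $A_kD/\sqrt{1-\sigma^2}$. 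So your ingredients (Cauchy--Schwarz, $a_j^2=\lambda_j A_j$, the bound $\|x_j-x_*\|\le D$) are right, but the skeleton is a telescoping identity rather than a one-step triangle inequality.
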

Now we are ready to propose our optimal tensor method in Algorithm \ref{Alg:Opt-High-Order}.

\begin{algorithm}[!h]
	\caption{The optimal tensor method}
	\label{Alg:Opt-High-Order}
	\begin{algorithmic}
		\STATE \textbf{STEP 1.}  Let $x_0=y_0 \in \mathbb{R}^n$, $v_0 \in \partial f(x_0) $, $\epsilon_0=0$, $k =0$ and set $0 < \bar \epsilon, \bar \rho < 1$, $M \ge L_d$. Let $\hat{\sigma} \ge 0$, $0 < \sigma_l < \sigma_u < 1$ such that
		$\sigma:=\hat{\sigma}+\sigma_u<1$ and $\sigma_l(1+\hat{\sigma})^{d-1}<\sigma_u(1-\hat{\sigma})^{d-1}$.
		
		\vspace{0.2cm}
		
		\STATE \textbf{STEP 2.}
		If {$\|v_{k}\| \leq \bar{\rho} $ and $\epsilon_{k} \leq \bar{\epsilon}$}, then \textbf{STOP}. Else, go to STEP 3.
		
		\vspace{0.2cm}
		
		\STATE \textbf{STEP 3.} Find $\lambda_{k+1}$ and a $\hat{\sigma}$-approximate solution\\
		$(y_{k+1},u_{k+1},\epsilon_{k+1}) \in \mathbb{R}^n\times\mathbb{R}^n\times\mathbb{R}_+$ of
		(\ref{inexact proximal}) at $(\lambda_{k+1},\tilde{x}_k)$ such that either
		\begin{equation}\label{two-side condition}
		\frac{d! \sigma_l}{L_d + M } \leq \lambda_{k+1}\|y_{k+1}-\tilde{x}_{k} \|^{d-1} \leq  \frac{d! \sigma_u}{L_d + M }
		\end{equation}
		or $\| \nabla f(y_{k+1}) + u_{k+1} - \nabla f_{\tilde{x}_k}(y_{k+1}) \| \le \bar \rho$ {and $\epsilon_{k+1} \le \bar{\epsilon}$ hold},
		where
		\begin{equation}\label{rep:x-tilde}
		\tilde{x}_k=\frac{A_k}{A_k+a_{k+1}} y_{k} + \frac{a_{k+1}}{A_k+a_{k+1}} x_k
		\end{equation}
		and
		\begin{equation}\label{rep:a-k}
		a_{k+1}=\frac{\lambda_{k+1}+\sqrt{\lambda_{k+1}^2 +4\lambda_{k+1} A_{k}} }{2} .
		\end{equation}
		{
			(Note that $\lambda_{k+1}$ appears in both \eqref{two-side condition} and \eqref{rep:a-k}, and seeking a proper $\lambda_{k+1}$ requires a bisection procedure, to be called Algorithm \ref{alg:bisection} in Section \ref{Line search}.)
		}
		\STATE \textbf{STEP 4.} Let
		\begin{eqnarray}
		v_{k+1}&=&\nabla f(y_{k+1}) + u_{k+1} - \nabla f_{\tilde{x}_k}(y_{k+1}), \label{vk-Alg2}\\
		A_{k+1} &=& A_k +a_{k+1}, \nonumber \\
		x_{k+1} &=& x_k -a_{k+1} v_{k+1}. \nonumber
		\end{eqnarray}
		\STATE Set $k\leftarrow k+1$ and go to STEP 2.
	\end{algorithmic}
\end{algorithm}

At this point, neither Algorithm \ref{A-HPE framework} nor Algorithm \ref{Alg:Opt-High-Order} has been shown to be implementable. In fact, STEP 3 in both algorithms presented above remain unspecified. { Since $\lambda_{k+1}$ appears in both \eqref{two-side condition} and \eqref{rep:a-k}, }
it is even unclear why such solutions as required by STEP 3 exist at all. {Actually, the double roles played by $\lambda_{k+1}$  in \eqref{two-side condition} and \eqref{rep:a-k} are crucial for the overall $O\left( 1 / k^{(3d+1)/2} \right)$ convergence rate. As a tradeoff, such $\lambda_{k+1}$ is not easy to find}.
In Section \ref{Line search}, we shall { discuss a {\em practical} method to find a proper $\lambda_{k+1}$} (and thus establish a practical implementation of STEP 3 in Algorithm 2) via the
{Approximate Tensor Subroutine ({\bf ATS})}  in combination with a line-search subroutine.

First, let us remark that Algorithm \ref{Alg:Opt-High-Order} is indeed a specialization of A-HPE.
For simplicity, we let $y_{k+1} = \tilde{y}_{k+1}$ in STEP 4 of Algorithm \ref{A-HPE framework}. Because $(y_{k+1},u_{k+1},\epsilon_{k+1})$ is a $\hat{\sigma}$-approximate solution at $(\lambda_{k+1},\tilde{x}_k)$, one has that $u_{k+1} \in (\nabla f_{\tilde{x}_k} +
\partial_{\epsilon_{k+1}}h)(y_{k+1})$, and so we have
\begin{eqnarray*}
	v_{k+1} &\in& \nabla f(y_{k+1}) - \nabla f_{\tilde{x}_k}(y_{k+1}) + (\nabla f_{\tilde{x}_k} +
	\partial_{\epsilon_{k+1}}h)(y_{k+1}) \\
	&=& \nabla f(y_{k+1}) + \partial_{\epsilon_{k+1}}h(y_{k+1}) \subseteq \partial_{\epsilon_{k+1}} (f + h)(y_{k+1})
\end{eqnarray*}
which satisfies \eqref{triple relation 1}. To establish \eqref{triple relation 2}, we need the following proposition.

\begin{proposition}\label{M proposition 7.7}
	Let $(y,u,\epsilon)$ be a $\hat{\sigma}$-approximate solution of (\ref{inexact proximal}) at $(\lambda,{\tilde{x}})$ such that \eqref{two-side condition} holds. Define $v:=\nabla f(y)+u-\nabla f_{\tilde{x}} (y)$. Then,
	\begin{equation}\label{ineq:appr-sol}
	\| \lambda v+y-{\tilde{x}}\|^2+2 \lambda \epsilon \leq \left(\hat{\sigma}+\lambda \frac{L_d+M}{d!} \|y-\tilde{x}\|^{d-1} \right)^2 \|y-\tilde{x}\|^2.
	\end{equation}
	Consequently,
	\begin{equation}\label{ineq:appr-sol2}
	\|\lambda v +{y} -\tilde{x} \|^2 +2 \lambda \epsilon
	\leq \sigma^2 \|{y} -\tilde{x}\|^2 \quad \mbox{with}\quad \sigma=\sigma_u + \hat{\sigma},
	\end{equation}
	{  where $\sigma_u$ is a input paramter in Algorithm \ref{Alg:Opt-High-Order} and also appears in \eqref{two-side condition}.}

	%and
	%\begin{equation}\label{M7.15}
	%  \|v\| \leq \frac{1}{\lambda}\left(1+\hat{\sigma} +\frac{L_d+M}{d!} \lambda \|y-x\|^{d-1}\right)\|y-x\|,\ \ \ \epsilon \leq \frac{\hat{\sigma}^2}{2\lambda}\|y-x\|^2
	%\end{equation}
\end{proposition}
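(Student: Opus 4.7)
The plan is to start from the decomposition
\[
\lambda v + y - \tilde{x} = (\lambda u + y - \tilde{x}) + \lambda\bigl(\nabla f(y) - \nabla f_{\tilde{x}}(y)\bigr),
\]
which follows directly from the definition $v = \nabla f(y) + u - \nabla f_{\tilde{x}}(y)$. By the triangle inequality and Lemma \ref{Fun-Gap-Bound}, I get
\[
\|\lambda v + y - \tilde{x}\| \le \|\lambda u + y - \tilde{x}\| + \lambda \tfrac{L_d+M}{d!}\|y - \tilde{x}\|^d.
\]
This already handles the gradient approximation error and reduces the problem to combining it with the $\hat\sigma$-approximate solution estimate.

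Next, I would denote $A := \|\lambda u + y - \tilde{x}\|$, $B := \lambda\tfrac{L_d+M}{d!}\|y - \tilde{x}\|^d$ and $C := \hat\sigma \|y - \tilde{x}\|$. The approximation condition \eqref{Appro-Sol} reads $A^2 + 2\lambda\epsilon \le C^2$, so in particular $A \le \sqrt{C^2 - 2\lambda\epsilon}$. Squaring the triangle bound gives
\[
\|\lambda v + y - \tilde{x}\|^2 \le \bigl(\sqrt{C^2 - 2\lambda\epsilon} + B\bigr)^2 = C^2 - 2\lambda\epsilon + 2B\sqrt{C^2 - 2\lambda\epsilon} + B^2.
\]
Adding $2\lambda\epsilon$ to both sides and using $\sqrt{C^2 - 2\lambda\epsilon} \le C$ (since $\epsilon \ge 0$), the $-2\lambda\epsilon$ cancels and one obtains
\[
\|\lambda v + y - \tilde{x}\|^2 + 2\lambda\epsilon \le C^2 + 2BC + B^2 = (C+B)^2,
\]
which upon factoring $\|y-\tilde{x}\|^2$ is exactly \eqref{ineq:appr-sol}.

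For \eqref{ineq:appr-sol2}, I simply plug in the right-hand inequality of the two-side condition \eqref{two-side condition}, namely $\lambda \|y - \tilde x\|^{d-1} \le d!\sigma_u/(L_d+M)$, which yields $\lambda \tfrac{L_d+M}{d!}\|y - \tilde x\|^{d-1} \le \sigma_u$, hence $\hat\sigma + \lambda \tfrac{L_d+M}{d!}\|y - \tilde x\|^{d-1} \le \hat\sigma + \sigma_u = \sigma$.

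The only delicate step is the algebraic manipulation that folds the $2\lambda\epsilon$ term into the squared bound; the key trick is recognizing that after squaring $A+B$, the sign of the $-2\lambda\epsilon$ inside the square root lets us cancel the extra $+2\lambda\epsilon$ we add, while the cross term $2B\sqrt{C^2 - 2\lambda\epsilon}$ is tamed by the elementary bound $\sqrt{C^2 - 2\lambda\epsilon}\le C$. Everything else is triangle inequality, Lemma \ref{Fun-Gap-Bound}, and a direct substitution from \eqref{two-side condition}.
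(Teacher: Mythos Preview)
Your proof is correct and follows essentially the same route as the paper: the decomposition $\lambda v + y - \tilde x = (\lambda u + y - \tilde x) + \lambda(\nabla f(y)-\nabla f_{\tilde x}(y))$, the triangle inequality, Lemma~\ref{Fun-Gap-Bound}, and the $\hat\sigma$-approximate condition \eqref{Appro-Sol}, followed by the right-hand side of \eqref{two-side condition}. The only cosmetic difference is in the algebra: the paper avoids the square root by regrouping $(A+B)^2 + 2\lambda\epsilon = (A^2+2\lambda\epsilon) + 2AB + B^2$ and bounding the first bracket by $C^2$ and $A$ by $C$ in the cross term, whereas you bound $A$ by $\sqrt{C^2-2\lambda\epsilon}$ first and then use $\sqrt{C^2-2\lambda\epsilon}\le C$; the two computations are equivalent.
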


\begin{proof}
	First of all, according to Lemma \ref{Fun-Gap-Bound}, it follows that
	\begin{equation*}
	\lambda \|u-v\| = \lambda \|\nabla f(y)-\nabla f_{\tilde{x}} (y)\| \leq \lambda \frac{L_d+M}{d!} \|y-\tilde{x}\|^d.
	\end{equation*}
	Combining the above inequality with \eqref{Appro-Sol}, one has that
	\begin{eqnarray*}
		&& \|\lambda v+y-\tilde{x}\|^2 +2 \lambda \epsilon \\
		& \leq& \left( \|\lambda u+y-\tilde{x}\| +\lambda\|u-v\| \right)^2 + 2\lambda \epsilon \\
		& = &\left( \|\lambda u+y-\tilde{x}\|^2 +2\lambda \epsilon \right) +2 \lambda \|u-v\|\|\lambda u+y-\tilde{x}\| + \lambda^2 \|u-v\|^2 \\
		& \leq& \hat{\sigma}^2 \|y-\tilde{x}\|^2 + 2\left( \lambda \frac{L_d+M}{d!} \|y-\tilde{x}\|^d \right)\hat{\sigma}\|y-\tilde{x}\|
		+ \left( \lambda \frac{L_d+M}{d!} \|y-\tilde{x}\|^d \right)^2 \\
		& =&\left( \hat{\sigma} +\lambda \frac{L_d+M}{d!} \|y-\tilde{x}\|^{d-1} \right)^2 \|y-\tilde{x}\|^2,
	\end{eqnarray*}
	proving the first inequality. Then, by the { right} hand side of \eqref{two-side condition}, $\lambda \frac{L_d+M}{d!} \|y-\tilde{x}\|^{d-1} \le \sigma_u$, and so the second inequality follows.
\end{proof}

We summarize the above discussion in the theorem below.

\begin{theorem}
	Algorithm \ref{Alg:Opt-High-Order} is a manifestation of the A-HPE framework,
	%	{except it could stop early at STEP 2 due to the relaxed stopping condition and the sequence generated by Algorithm \ref{Alg:Opt-High-Order} is a sequence generated by A-HPE framework}
	and thus the results of Lemma \ref{convergence speed_general} hold for the sequence generated by Algorithm \ref{Alg:Opt-High-Order}.
\end{theorem}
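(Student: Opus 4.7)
The plan is to identify Algorithm \ref{Alg:Opt-High-Order} step by step with the A-HPE template of Algorithm \ref{A-HPE framework}, so that the hypotheses of Lemma \ref{convergence speed_general} are met and its conclusions can be invoked verbatim. First I would match the auxiliary iterates: set $\tilde{y}_{k+1} := y_{k+1}$ (as suggested in the discussion preceding the theorem), and observe that the formulas for $\tilde{x}_k$, $a_{k+1}$, $A_{k+1}$, and $x_{k+1}$ in Algorithm \ref{Alg:Opt-High-Order} are literally the same as those in Algorithm \ref{A-HPE framework}. So the only nontrivial matter is to verify the two A-HPE conditions \eqref{triple relation 1} and \eqref{triple relation 2} for the triplet $(y_{k+1}, v_{k+1}, \epsilon_{k+1})$, where $v_{k+1}$ is defined by \eqref{vk-Alg2}.

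For \eqref{triple relation 1}, I would use the fact that $(y_{k+1}, u_{k+1}, \epsilon_{k+1})$ is a $\hat{\sigma}$-approximate solution of \eqref{inexact proximal} at $(\lambda_{k+1},\tilde{x}_k)$, which by Definition \ref{Newton approximate def} gives $u_{k+1} \in (\nabla f_{\tilde{x}_k} + \partial_{\epsilon_{k+1}} h)(y_{k+1})$. Adding and subtracting $\nabla f_{\tilde{x}_k}(y_{k+1})$ and using \eqref{vk-Alg2}, one obtains the chain
\[
v_{k+1} \in \nabla f(y_{k+1}) + \partial_{\epsilon_{k+1}} h(y_{k+1}) \subseteq \partial_{\epsilon_{k+1}} F(y_{k+1}),
\]
exactly as displayed just before Proposition \ref{M proposition 7.7}. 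This part is completely mechanical.

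The main step is \eqref{triple relation 2}, and here I have to deal with the two alternative exits in STEP 3 of Algorithm \ref{Alg:Opt-High-Order}. If the two-sided condition \eqref{two-side condition} holds, then Proposition \ref{M proposition 7.7} (inequality \eqref{ineq:appr-sol2}) directly yields $\|\lambda_{k+1} v_{k+1} + y_{k+1} - \tilde{x}_k\|^2 + 2\lambda_{k+1}\epsilon_{k+1} \leq \sigma^2 \|y_{k+1} - \tilde{x}_k\|^2$ with $\sigma = \sigma_u + \hat{\sigma} < 1$ by the parameter choice in STEP 1. If instead the second alternative holds, i.e. $\|v_{k+1}\| \le \bar{\rho}$ and $\epsilon_{k+1} \le \bar{\epsilon}$, then the stopping test in STEP 2 of the next iteration is met and the algorithm terminates; hence no further A-HPE iteration needs to be checked. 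Consequently, for every iteration that is actually executed past STEP 3, both A-HPE conditions hold with the same constant $\sigma < 1$, and Lemma \ref{convergence speed_general} applies to the sequence $\{x_k, y_k, \tilde{x}_k, \tilde{y}_k\}$ generated by Algorithm \ref{Alg:Opt-High-Order}. I anticipate no real obstacle here; the bulk of the work has already been packaged into Proposition \ref{M proposition 7.7}, and the theorem is essentially a consolidation statement.
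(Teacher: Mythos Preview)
Your proposal is correct and follows essentially the same route as the paper: the paper's proof is precisely the discussion preceding the theorem (verifying \eqref{triple relation 1} via the $\hat\sigma$-approximate solution property) together with Proposition \ref{M proposition 7.7} (supplying \eqref{triple relation 2} when \eqref{two-side condition} holds). Your additional remark that the second alternative in STEP 3 triggers termination at the next STEP 2---and hence need not be checked against the A-HPE conditions---is a small point of care that the paper leaves implicit.
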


Before addressing the implementation of STEP 3 in Algorithm \ref{Alg:Opt-High-Order}, let us first present the overall iteration complexity of Algorithm \ref{Alg:Opt-High-Order}, assuming STEP 3 could be implemented.
%The key here is to obtain a lower bound on $A_k$, as the following theorem stipulates.

%\begin{theorem}\label{convergence speed_large step}
%	Let $D$ be the distance of $x_0$ to $X_{*}$. Suppose that $\{A_k\}_{k=1}^{\infty}$ is generated from Algorithm~\ref{Alg:Opt-High-Order}. Then for any integer $k \ge 1$, it holds that
%
%{
%	\begin{equation}\label{spd A_k}
%	A_k \ge \left(\frac{1}{2}\right)^{d+1} \frac{d!\sigma_l}{L_d+M} \left(\frac{1-(\sigma_u+\hat \sigma)^2}{D^2}\right)^{\frac{d-1}{2}} \left(\frac{2}{d+1}\right)^{\frac{3d+1}{2}} k^{\frac{3d+1}{2}}.
%	\end{equation}
%}
%\end{theorem}

%The next iteration complexity result readily follows from Theorem \ref{convergence speed_large step}, whose proof will be the subject of Subsection~\ref{Proof-Theorem3.4}.

\begin{theorem}\label{Thm:Iteration-Complexity}
	Let $D$ be the distance of $x_0$ to $X_{*}$. Then, for any integer $k \ge 1$, the iterate $y_k$ generated by Algorithm \ref{Alg:Opt-High-Order} satisfies:
	%	\begin{equation*}
	%	F(y_{k})-F_{*} \leq \frac{D^{\frac{4d}{d+1}}}{2} \left( \frac{d! \sigma_l}{L_d + M }\right)^{-\frac{2}{d+1}}\left(\frac{1}{1-(\hat\sigma^2 + \sigma_u)^2}\right)^{\frac{d-1}{d+1}} \left(\frac{d+1}{2}\right)^{\frac{3d+1}{d+1}} k^{-\frac{3d+1}{2}}.
	%	\end{equation*}
	{
		\begin{equation*}
		F(y_{k})-F_{*} \leq  \left( \frac{d+1}{2} \right)^{\frac{3d+1}{2}} \frac{2^d}{ (1-(\hat \sigma + \sigma_u)^2)^{\frac{d-1}{2}}d!\sigma_l } D^{d+1}(L_d+M)  k^{-\frac{3d+1}{2}}.
		\end{equation*}
		
	}
	
\end{theorem}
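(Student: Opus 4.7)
The plan is to lower-bound $A_k$ at rate $k^{(3d+1)/2}$ and then invoke $F(y_k)-F_*\le D^2/(2A_k)$ from (2.10). Throughout, I will assume that the two-sided condition (3.5) is active at every iteration $j\le k$: the alternative branch in STEP 3 produces $v_j=\nabla f(y_j)+u_j-\nabla f_{\tilde x_{j-1}}(y_j)$ with $\|v_j\|\le\bar\rho$ and $\epsilon_j\le\bar\epsilon$, which immediately satisfies the STEP 2 stopping test and terminates the algorithm.

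The first step extracts a ``budget'' inequality. The left half of (3.5) yields $\|y_j-\tilde x_{j-1}\|^2\ge (d!\sigma_l/(L_d+M))^{2/(d-1)}\,\lambda_j^{-2/(d-1)}$. Using the convention $y_j=\tilde y_j$ and substituting into (2.11), and writing $p:=(d+1)/(d-1)$ and $\sigma:=\hat\sigma+\sigma_u$, one obtains
$$\sum_{j=1}^{k}\frac{A_j}{\lambda_j^{p}}\;\le\;K\;:=\;\frac{D^{2}(L_d+M)^{2/(d-1)}}{(d!\sigma_l)^{2/(d-1)}(1-\sigma^{2})}.$$
Combining with $A_j\ge s_j^{2}/4$ from (2.12), where $s_j:=\sum_{i\le j}\sqrt{\lambda_i}$, the inequality sharpens to $\sum_{j=1}^{k} s_j^{2}/\lambda_j^{p}\le 4K$.

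The heart of the argument is now the following purely analytic lemma: for positive reals $\lambda_1,\dots,\lambda_k$ satisfying $\sum_j s_j^{2}/\lambda_j^{p}\le 4K$, one has $s_k^{2}\ge c_d\,k^{(3d+1)/2}K^{-(d-1)/2}$ for an explicit $d$-dependent constant $c_d$. A continuous-time heuristic makes the scaling transparent: minimising $s(k)$ subject to $\int_{0}^{k} s(t)^{2}/(s'(t))^{2p}\,\mathrm dt\le 4K$ is solved by the self-similar ansatz $s(t)=\alpha\, t^{(d+1)/2}$, which makes the integrand $t$-independent and predicts the tight scaling $s(k)\asymp k^{(3d+1)/4}K^{-(d-1)/4}$. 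The discrete version I would prove by applying H\"older's inequality to the decomposition $s_k=\sum_{j}\sqrt{\lambda_j}=\sum_{j}(s_j^{2}/\lambda_j^{p})^{\alpha}\cdot\omega_j$, with exponent $\alpha$ and weights $\omega_j$ dictated by the self-similar profile, and then using monotonicity of $\{s_j\}$ to pass from $s_j$ inside the sum to $s_k$ outside.

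The main obstacle will be choosing the H\"older exponents correctly: because $s_j$ (not $s_k$) appears in the constrained sum, a na\"\i ve application loses an extra factor of $k$; the balancing I expect to need is a geometric grouping of indices by dyadic levels of $s_j$, or equivalently an inductive telescoping on $s_j^{2(p-1)}$ matched to the exponent $(d+1)/2$ of the self-similar profile. Once the analytic lemma is established, the chain $A_k\ge s_k^{2}/4\ge (c_d/4)\,k^{(3d+1)/2}K^{-(d-1)/2}$, combined with $F(y_k)-F_*\le D^{2}/(2A_k)$ and the identity $K^{(d-1)/2}=D^{d-1}(L_d+M)/[d!\sigma_l(1-(\hat\sigma+\sigma_u)^{2})^{(d-1)/2}]$, reproduces the stated rate; tracking constants through the H\"older step then extracts the explicit prefactor $((d+1)/2)^{(3d+1)/2}\cdot 2^{d}$.
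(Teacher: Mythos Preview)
Your budget inequality $\sum_j A_j/\lambda_j^{(d+1)/(d-1)}\le K$ is exactly the paper's starting point (their constant is called $C$). The departure comes next: you immediately replace $A_j$ by $s_j^2/4$ and try to lower-bound $s_k$ from the resulting constraint on $(s_j,\lambda_j)$ alone. That reformulated ``analytic lemma'' is true with the right constant, but you have not proved it, and the difficulty you flag is real: the constrained sum carries $s_j$, not $s_k$, so a single H\"older pass plus monotonicity only recovers $s_k^2\gtrsim k^{d+1}$ (what the termwise bound gives), a full factor $k^{(d-1)/2}$ short. Neither the dyadic grouping nor the telescoping you sketch is actually carried out, so as written the argument has a gap at precisely the step that produces the optimal exponent.

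The paper closes this gap by a different and cleaner route: it does \emph{not} eliminate $A_j$ from the budget. Instead it applies H\"older to $\sum_j\sqrt{\lambda_j}$ with conjugate exponents $p'=\tfrac{3d+1}{2d+2}$ and $q'=\tfrac{3d+1}{d-1}$ (note: not your $p$), chosen so that the $\lambda_j$ powers cancel:
\[
\Bigl(\sum_{j\le k}\sqrt{\lambda_j}\Bigr)^{1/p'}K^{1/q'}\ \ge\ \sum_{j\le k}\lambda_j^{1/(2p')}\cdot\frac{A_j^{1/q'}}{\lambda_j^{(d+1)/(q'(d-1))}}\ =\ \sum_{j\le k}A_j^{1/q'}.
\]
Combined with $A_k\ge\tfrac14\bigl(\sum_j\sqrt{\lambda_j}\bigr)^2$ this gives the \emph{recursive} inequality
\[
A_k\ \ge\ \tfrac14\,K^{-2p'/q'}\Bigl(\sum_{j\le k}A_j^{1/q'}\Bigr)^{2p'}.
\]
The endgame is a bootstrap: start from the trivial $A_k\ge A_1$, feed it back to obtain $A_k\ge B_2k^{r_2}$, iterate to $A_k\ge B_ik^{r_i}$ with $r_{i+1}=2p'(r_i/q'+1)$, and let $i\to\infty$. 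Since $2p'/q'=(d-1)/(d+1)<1$, the exponents converge to $(3d+1)/2$ and the constants to $T^{(d+1)/2}$ with $T=\tfrac14K^{-(d-1)/(d+1)}\bigl(\tfrac{2}{d+1}\bigr)^{2p'}$; tracking constants through the limit yields the stated prefactor exactly. The moral: keeping $A_j$ inside the H\"older step turns the hard one-shot lower bound you are chasing into a soft recursion whose fixed point is the optimal rate.
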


%\begin{proof}
%	Combining \eqref{Bound: Ak-f-yk-D} and \eqref{spd A_k} yields that
%{
%	\[
%	F(y_k)-F_{*} \leq \frac{1}{2 A_k}D^2
%	\leq \left( \frac{d+1}{2} \right)^{\frac{3d+1}{2}} \frac{2^d}{ (1-(\hat \sigma + \sigma_u)^2)^{\frac{d-1}{2}}d!\sigma_l } D^{d+1}(L_d+M)  k^{-\frac{3d+1}{2}}.
%	\]
%}
%\end{proof}
The above theorem establishes the $O(1/k^{\frac{3d+1}{2}})$ iteration complexity for Algorithm \ref{Alg:Opt-High-Order}. Since Algorithm~\ref{Alg:Opt-High-Order} falls into the category of the High-Order Oracle Model, whose iteration complexity has a lower bound of $O(1/k^{\frac{3d+1}{2}})$; see Arjevanim, Shamir and Shiff \cite{Shamir-2017-Oracle} and Nesterov \cite{Nesterov-2018}. The worst-case iteration complexity of Algorithm~\ref{Alg:Opt-High-Order} matches this lower bound and it is therefore an {\it optimal}\/ method.

\subsection{Proof of Theorem \ref{Thm:Iteration-Complexity}} \label{Proof-Theorem3.4}
%To establish the lower bound of $\{A_k\}_{k=1}^{\infty}$ in Theorem \ref{convergence speed_large step},
We first provide a recursive bound on $A_k$ as an intermediate step.

\begin{proposition}\label{Prop:Recursive-Bound}Let $D$ be the distance of $x_0$ to $X_{*}$.
	Suppose $\{A_k\}_{k=1}^{\infty}$ is generated from Algorithm \ref{Alg:Opt-High-Order}, then
	\begin{equation}\label{Ak-Recursive-Bound}
	A_k  \ge \frac{1}{4} C^{-\frac{2p}{q}} \left(\sum_{j=1}^k A_j^{\frac{1}{q}}\right)^{2p}
	\end{equation}
	where $q=\frac{3d+1}{d-1}$ , $p=\frac{3d+1}{2d+2}$ and
	$C = \frac{D^2}{(1-({\hat\sigma} + \sigma_u)^2)} \left( \frac{d! \sigma_l}{L_d + M }\right)^{-\frac{2}{d-1}}$.
\end{proposition}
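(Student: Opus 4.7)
The plan is to combine three ingredients: the two-sided step-size condition \eqref{two-side condition} that is enforced in STEP 3 of Algorithm \ref{Alg:Opt-High-Order}, the telescoping bound \eqref{Bound: sum-Ak} from Lemma \ref{convergence speed_general}, and the $A_k$-vs-$\lambda_k$ estimate \eqref{relation of A and lambda}. The intermediate goal is to derive a single summability estimate on $\{A_j\}$ and $\{\lambda_j\}$, and then to interpolate via Hölder.

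First I would use the left-hand side of \eqref{two-side condition}, namely $\lambda_j \|y_j-\tilde x_{j-1}\|^{d-1}\ge d!\sigma_l/(L_d+M)$, to solve for $\|y_j-\tilde x_{j-1}\|^2 \ge \bigl(d!\sigma_l/((L_d+M)\lambda_j)\bigr)^{2/(d-1)}$. Substituting this lower bound into \eqref{Bound: sum-Ak} (with $\tilde y_j=y_j$) and pulling the constant factor out of the sum gives
\begin{equation*}
\sum_{j=1}^{k} \frac{A_j}{\lambda_j^{(d+1)/(d-1)}} \;\le\; \frac{D^2}{1-(\hat\sigma+\sigma_u)^2}\left(\frac{d!\sigma_l}{L_d+M}\right)^{-2/(d-1)} \;=\; C,
\end{equation*}
using $1+\tfrac{2}{d-1}=\tfrac{d+1}{d-1}$. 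This is the key summability inequality that feeds Hölder.

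Next I would apply Hölder's inequality with the exponent pair $(q,q/(q-1))=(q,p)$ to the split
\begin{equation*}
A_j^{1/q} \;=\; \left(\frac{A_j}{\lambda_j^{(d+1)/(d-1)}}\right)^{1/q}\cdot \lambda_j^{\beta}, \qquad \beta=\frac{d+1}{q(d-1)}=\frac{d+1}{3d+1}.
\end{equation*}
The choice of $\beta$ is precisely what makes the second factor, after raising to the conjugate exponent $q/(q-1)=p$, become $\lambda_j^{\beta p}=\lambda_j^{1/2}$; this is the arithmetical pivot of the whole proof (and the step that is easiest to bungle). Hölder then yields
\begin{equation*}
\sum_{j=1}^{k} A_j^{1/q} \;\le\; \left(\sum_{j=1}^{k} \frac{A_j}{\lambda_j^{(d+1)/(d-1)}}\right)^{1/q}\left(\sum_{j=1}^{k}\sqrt{\lambda_j}\right)^{1/p} \;\le\; C^{1/q}\left(\sum_{j=1}^{k}\sqrt{\lambda_j}\right)^{1/p}.
\end{equation*}

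Finally I would raise both sides to the power $2p$ and invoke \eqref{relation of A and lambda}, which gives $\bigl(\sum_{j=1}^{k}\sqrt{\lambda_j}\bigr)^2\le 4 A_k$. This produces
\begin{equation*}
\left(\sum_{j=1}^{k} A_j^{1/q}\right)^{2p} \;\le\; C^{2p/q}\left(\sum_{j=1}^{k}\sqrt{\lambda_j}\right)^{2} \;\le\; 4\,C^{2p/q}\,A_k,
\end{equation*}
which after rearrangement is exactly \eqref{Ak-Recursive-Bound}. The only real obstacle is the algebraic verification that the Hölder exponent $\beta$ determined by $A_j$'s weight matches the $\sqrt{\lambda_j}$ factor coming from \eqref{relation of A and lambda}; once one sees that $p$ and $q$ have been defined precisely to make $\beta p=\tfrac12$, everything telescopes cleanly.
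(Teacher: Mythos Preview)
Your proposal is correct and follows essentially the same route as the paper: derive $\sum_{j=1}^k A_j/\lambda_j^{(d+1)/(d-1)}\le C$ from the left inequality in \eqref{two-side condition} and \eqref{Bound: sum-Ak}, apply H\"older with conjugate exponents $(p,q)$ using the identity $\tfrac{1}{2p}=\tfrac{d+1}{q(d-1)}$ to obtain $\sum A_j^{1/q}\le C^{1/q}(\sum\sqrt{\lambda_j})^{1/p}$, and finish via \eqref{relation of A and lambda}. The only cosmetic difference is that the paper writes the H\"older step in the reverse direction (bounding $\sum A_j^{1/q}$ from above by the product of the two sums), but the substance is identical.
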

\begin{proof} Suppose $\{x_k,y_k,\tilde{x}_k\}$ is the sequence generated by Algorithm \ref{Alg:Opt-High-Order}. Then, according to \eqref{Bound: sum-Ak} and Proposition \ref{M proposition 7.7}, it holds that
	\begin{equation*}
	\sum^{k}_{j=1} \frac{A_j}{\lambda_j} \|{y}_j-\tilde{x}_{j-1} \|^2 \leq \frac{D^2}{1-({\hat\sigma} + \sigma_u)^2},
	\end{equation*}
	which together with the left hand side of \eqref{two-side condition} implies
	\begin{eqnarray}
	\sum_{j=1}^k \frac{A_j}{\lambda_{j}^{\frac{d+1}{d-1}}} &= &  \sum_{j=1}^k \frac{A_j\|{y}_j-\tilde{x}_{j-1} \|^2}{\lambda_{j}} \cdot \frac{1}{\lambda_{j}^{\frac{2}{d-1}}\|{y}_j-\tilde{x}_{j-1} \|^2} \nonumber \\
	&\le& \frac{D^2}{(1-({\hat\sigma}+\sigma_u)^2)} \left( \frac{d! \sigma_l}{L_d + M }\right)^{-\frac{2}{d-1}} = C. \label{sum bound C}
	\end{eqnarray}
	By the definition of $p$ and $q$, we have $\frac{1}{p}+\frac{1}{q}=1$. Using H\"{o}lder's inequality, together with \eqref{sum bound C}, we have
	\begin{equation*}
	\left(\sum_{j=1}^k \sqrt{\lambda_j}\right)^{\frac{1}{p}}C^{\frac{1}{q}}  \ge \left(\sum_{j=1}^k \sqrt{\lambda_j}\right)^{\frac{1}{p}} \left(\sum_{j=1}^k \frac{A_j}{\lambda_j^{\frac{d+1}{d-1}}}\right)^{\frac{1}{q}} \ge
	\sum_{j=1}^k \lambda_j^{\frac{1}{2p}} \frac{A_j^{\frac{1}{q}}}{\lambda_j^{\frac{d+1}{q(d-1)}}} =
	\sum_{j=1}^k A_j^{\frac{1}{q}}.
	\end{equation*}
	Finally, by \eqref{relation of A and lambda} we obtain
	\begin{equation*}
	A_k \ge \frac{1}{4} \left(\sum_{j=1}^k \sqrt{\lambda_j}\right)^2 \ge \frac{1}{4} C^{-\frac{2p}{q}} \left[\sum_{j=1}^k A_j^{\frac{1}{q}}\right]^{2p}.
	\end{equation*}
	
\end{proof}
%Now let's come back to prove Theorem \ref{convergence speed_large step}.

\noindent {\bf Proof of Theorem \ref{Thm:Iteration-Complexity}.} Let $p$, $q$ and $C$ be defined as in Proposition \ref{Prop:Recursive-Bound}. Construct $\{B_k\}$ such that $B_1=A_1$ and $B_i=T^{\frac{1-(2p/q)^{i-1}}{1-2p/q}} (A_1)^{(2p/q)^{i-1}}$ for $i \ge 2$, where  $T:=\frac{1}{4}(\frac{1}{C})^{\frac{2p}{q}} (\frac{2}{d+1})^{2p}$. Next, we shall apply induction to show that for any $k \ge 1$,
\begin{equation}\label{Ak ge}
A_k \ge B_{i}k^{r_{i}}, \, \forall \, i \ge 1,
\end{equation}
where $r_i=\frac{3d+1}{2} \left[1-(2p/q)^{i-1}\right]$.
When $i = 1$, this is obvious because $A_k \ge A_1 =B_1 k^{r_1}$. Now suppose that for any $k \ge 1$, $A_k \ge B_{i}k^{r_{i}}$ for some $i$. Then, by the induction hypothesis and \eqref{Ak-Recursive-Bound} it holds that
\begin{eqnarray}
A_k &\ge& \frac{1}{4} C^{-\frac{2p}{q}} \left(\sum_{j=1}^k A_j^{\frac{1}{q}}\right)^{2p}
\ge \frac{1}{4} C^{-\frac{2p}{q}} \left(\sum_{j=1}^k (B_i j^{r_i})^{\frac{1}{q}}\right)^{2p} \nonumber \\
&=& \frac{1}{4} \left(\frac{B_i}{C}\right)^{\frac{2p}{q}} \left(\sum_{j=1}^k  j^{\frac{r_i}{q}}\right)^{2p}
\ge \frac{1}{4} \left(\frac{B_i}{C}\right)^{\frac{2p}{q}} \left(\int_{0}^k x^{\frac{r_i}{q}} dx  \right)^{2p} \nonumber \\
& =&   \frac{1}{4} \left(\frac{B_i}{C}\right)^{\frac{2p}{q}}\left(\frac{1}{1+{r_i}/{q}} k^{\frac{r_i}{q} +1}  \right)^{2p}
=    \frac{1}{4} \left(\frac{B_i}{C}\right)^{\frac{2p}{q}} \left(\frac{q}{q+r_i}\right)^{2p} k^{2p\left(\frac{r_i}{q} +1\right)} \nonumber \\
& \ge& \frac{1}{4} \left(\frac{B_i}{C}\right)^{\frac{2p}{q}}\left(\frac{2}{d+1}\right)^{2p} k^{2p\left(\frac{r_i}{q} +1\right)}, \label{Ak-inequ1}
\end{eqnarray}
where the last inequality follows from
\begin{equation*}
\frac{q}{q+r_i}
= \frac{\frac{3d+1}{d-1}}{\frac{3d+1}{d-1} +\frac{3d+1}{2}\left[1-(2p/q)^{i-1}\right]}
=\frac{1}{1+\frac{d-1}{2}\left[1-(2p/q)^{i-1}\right]}
\ge \frac{1}{1+\frac{d-1}{2}}
= \frac{2}{d+1}.
\end{equation*}
%To proceed, we further simplify the two terms: $\frac{1}{4} \left(\frac{B_i}{C}\right)^{\frac{2p}{q}}\left(\frac{2}{d+1}\right)^{2p}$ and $2p \left( \frac{r_i}{q} +1 \right)$ by some straight forward calculation respectively.
Let us further simplify the expression.
First of all, from the definition of $T$ and $B_i$, one observes that
\begin{eqnarray}
\frac{1}{4} \left(\frac{B_i}{C}\right)^{\frac{2p}{q}} \left(\frac{2}{d+1}\right)^{2p}
=B _i^{\frac{2p}{q}} T
& =&\left[ T^{\frac{1-(2p/q)^{i-1}}{1-2p/q}} A_1^{(2p/q)^{i-1}} \right]^{\frac{2p}{q}} T \nonumber \\
& =&T^{\frac{2p/q-(2p/q)^i}{1-2p/q} +1} A_1^{(2p/q)^i} \nonumber \\
& =&T^{\frac{1-(2p/q)^i}{1-2p/q}} A_1^{(2p/q)^i} = B_{i+1}\label{Ak-inequ2}.
\end{eqnarray}
Then, the construction of $q$ and $r_i$ implies that
\begin{eqnarray}
2p \left( \frac{r_i}{q} +1 \right)
& =& \frac{3d+1}{d+1}\left( 1+\frac{\frac{3d+1}{2}(1-(2p/q)^{i-1})}{\frac{3d+1}{d-1}}\right) \nonumber\\
& =& \frac{3d+1}{d+1} \left( 1+ \frac{d-1}{2} \left( 1-(2p/q)^{i-1} \right) \right) \nonumber\\
& =& \frac{3d+1}{d+1} \left( \frac{d+1}{2} - \frac{d-1}{2}(2p/q)^{i-1} \right) \nonumber\\
& =& \frac{3d+1}{2} \left( 1- (2p/q)^{i}\right) =r_{i+1} \label{Ak-inequ3},
\end{eqnarray}
where the second last equality holds true due to the fact that $2p/q = (d-1)/(d+1)$. Now the desired inequality \eqref{Ak ge} follows by combining \eqref{Ak-inequ1}, \eqref{Ak-inequ2} and \eqref{Ak-inequ3}. Observe that $2p/q = (d-1)/(d+1) <1$, and so $\lim\limits_{i \to \infty}B_i = T^{\frac{1}{1 - 2p/q}} = T^{\frac{d+1}{2}}$ and $\lim\limits_{i \to \infty}r_i = \frac{3d + 1}{2}$. Finally, by letting $i\rightarrow \infty$ in \eqref{Ak ge} and using the definition of $C$ in \eqref{sum bound C}, we have
%\begin{eqnarray*}
%	A_k \ge T^{\frac{d+1}{2}} k^{\frac{3d+1}{2}}
%	& =&\frac{1}{4} \left( \frac{1}{C}\right)^{\frac{d-1}{d+1}} \left(\frac{2}{d+1}\right)^{\frac{3d+1}{d+1}} k^{\frac{3d+1}{2}}  \\
%	&  =&  \frac{1}{4} \left( \frac{d! \sigma_l}{L_d + M }\right)^{\frac{2}{d+1}} \left(\frac{1-\sigma^2}{D^2}\right)^{\frac{d-1}{d+1}} \left(\frac{2}{d+1}\right)^{\frac{3d+1}{d+1}} k^{\frac{3d+1}{2}} .
%\end{eqnarray*}
{
	\begin{eqnarray*}
		A_k \ge T^{\frac{d+1}{2}} k^{\frac{3d+1}{2}}
		& =&\left[ \frac{1}{4} \left( \frac{1}{C}\right)^{\frac{d-1}{d+1}} \left(\frac{2}{d+1}\right)^{\frac{3d+1}{d+1}} \right]^{\frac{d+1}{2}} k^{\frac{3d+1}{2}}  \\
		&  =&  \left(\frac{1}{2}\right)^{d+1} \frac{d!\sigma_l}{L_d+M} \left(\frac{1-\sigma^2}{D^2}\right)^{\frac{d-1}{2}} \left(\frac{2}{d+1}\right)^{\frac{3d+1}{2}} k^{\frac{3d+1}{2}}.
	\end{eqnarray*}

	Combining it with \eqref{Bound: Ak-f-yk-D}, we have
	\[
	F(y_k)-F_{*} \leq \frac{1}{2 A_k}D^2
	\leq \left( \frac{d+1}{2} \right)^{\frac{3d+1}{2}} \frac{2^d}{ (1-(\hat \sigma + \sigma_u)^2)^{\frac{d-1}{2}}d!\sigma_l } D^{d+1}(L_d+M)  k^{-\frac{3d+1}{2}}.
	\]

}

\hfill $\Box$ \vskip 0.4cm

\subsection{Comparison with Nesterov's Accelerated Tensor Method}
In Nesterov's accelerated tensor method \cite{Nesterov-2018}, an auxiliary function
\begin{equation}
\psi_k(x) = l_k(x)+M \|x-x_0\|^{d+1}
\end{equation}
with $l_k$ being some linear function, is constructed to satisfy
\begin{eqnarray*}
	\mathcal{R}_{k}^1 &:& \beta_k:= \min\limits_{x} \psi_k(x) -A_k F(y_k) \ge 0 , \nonumber\\
	\mathcal{R}_{k}^2 &:& \psi_k(x) \le A_k F(x) + M \|x-x_0\|^{d+1}, \ \ \ \ \forall x \in \mathbb{R}^n
\end{eqnarray*}
where $A_k = \Theta(k^{d+1})$.
In fact, the function $\psi_k(x)$ serves as a bridge to guarantee the following relation:
\begin{eqnarray}\label{estimation-Nesterove}
A_k F(y_k)\le \min\limits_{x} \psi_k(x) \le \psi_k(x_*) \le
A_k F_* + M \|x_*-x_0\|^{d+1}.
\end{eqnarray}
As a result, $F(y_k) - F_*\le \frac{M}{A_k}\|x_*-x_0\|^{d+1}$ yielding the iteration complexity of $O(1/k^{d+1})$.

In the implementation of high-order A-HPE framework, it is crucial to ensure that condition \eqref{two-side condition} is satisfied. In the remainder of the paper, we shall focus on how to satisfy \eqref{two-side condition} in STEP 3 of Algorithm \ref{Alg:Opt-High-Order}. Our bid is to use bisection on a parameter $\lambda$ (to be introduced later),
while calling an
{ Approximate Tensor Subroutine (\bf ATS)}.
%{\em Approximate Tensor Proximal}\/ {\bf (ATP)} mapping subroutine.
%The cost of having such condiction is that we need to search $\lambda$ in Step 3 of Algorithm \ref{Alg:Opt-High-Order}.
Observe that $\tilde{x}$, which is the point to define $f_{\tilde{x}}(y)$ in \eqref{g_x} to approximate the smooth function $f(y)$, is indeed heavily dependent on $\lambda$. In other words, we need to search for the point where the Taylor expansion \eqref{g_x} is to be computed. This is a key difference between the A-HPE framework and Nesterov's approach \cite{Nesterov-2018}.
Once condition \eqref{two-side condition} is satisfied, then inequality \eqref{Bound: Ak-f-yk-D} would follow, which leads to the following tighter estimation than \eqref{estimation-Nesterove}:
\begin{eqnarray*}
	A_k F(y_k)+\beta_k \le  A_k F_* + \frac{1}{2}\|x_* - x_0 \|^2,
\end{eqnarray*}
as $\beta_k =\frac{1-\sigma^2}{2}\sum^{k}_{j=1} \frac{A_j}{\lambda_j} \|\tilde{y}_j-\tilde{x}_{j-1} \|^2 \ge 0$ is totally missing in \eqref{estimation-Nesterove}. The above inequality also gives an upper bound on $\beta_k$. Together with the lower bound \eqref{bound of yk ineq} this gives a better lower bound on $A_k$, namely $A_k \ge O(k^{\frac{3d+1}{2}})$,  which leads to the optimal iteration complexity presented in Theorem \ref{Thm:Iteration-Complexity}.

\section{A Line Search Subroutine and Its Iteration Complexity} \label{Line search}
After establishing the overall iteration complexity for Algorithm \ref{Alg:Opt-High-Order}, it remains %the critical issue boils down to how to find a $\lambda$ that satisfies the conditions in Step $3$.
to find a way to implement STEP 3 of the algorithm. In this section we discuss how this can be done, from a special case to the general one. The idea is better illustrated by considering the special case. Finally, for the general composite objective function, assuming the tensor proximal mapping regarding $h(x)$ is possible, our approach is based on a line-search procedure for the point on which the Taylor expansion is computed.

%In this section, we first argue the existence of such $\lambda$ under a simplified setting and then provide a line search procedure for implementing Step 3 of Algorithm \ref{Alg:Opt-High-Order}.

\subsection{The Non-Composite Case}

Let us first consider a special case for Algorithm \ref{Alg:Opt-High-Order} where $F(x)=f(x)$ %the non-smooth part $h(x)=0$
in the objective function and $y_{k+1}$ is the exact solution of the following convex tensor proximal point problem:
$$\min\limits_y f_{\tilde{x}_k}(y)+\frac{1}{2\lambda_{k+1}} \|y-\tilde{x}_k\|^2.$$
%and show that there exists a $\lambda_{k+1}$ satisfying
%the alternative condition in Step $3$. We postpone the proof for the general case to the next section, where the complexity for finding such $\lambda_{k+1}$ will be estimated.
We shall discuss how to find $\lambda_{k+1}$ to satisfy the alternative condition in STEP 3 of Algorithm \ref{Alg:Opt-High-Order}.

Note that for fixed $x_k$ and $y_k$, $\tilde{x}_{k}$ and $y_{k+1}$ are uniquely determined by $\lambda_{k+1}$. Therefore the functions $\tilde{x}_{k}(\lambda)$ and $y_{k+1}(\lambda)$ are  continuous with respect to $\lambda$ (where we denote $\lambda_{k+1}$ to be $\lambda$). Next, we show that:
\begin{description}
	\item[(i)] $\lambda \|y_{k+1} (\lambda) - \tilde{x}_k(\lambda)\|^{d-1} \rightarrow 0$, as $\lambda \rightarrow 0$;
	\item[(ii)] Either there exists an increasing sub-sequence $\lambda_j \uparrow \infty$, such that $\lambda_j \|y_{k+1} (\lambda_j) - \tilde{x}_k(\lambda_j)\|^{d-1} \rightarrow \infty$ as
	$j \rightarrow \infty$, or there exists $\hat \lambda$ such that $\| {\nabla f(y_{k+1} (\lambda) )}\|  \le \bar \rho $ for any $\lambda \ge  \hat \lambda$.
\end{description}
Observe that
\begin{eqnarray*}
	& & f_{\tilde{x}_k(\lambda)}(y_{k+1} (\lambda))+ \frac{1}{2\lambda} \|y_{k+1} (\lambda)-\tilde{x}_k(\lambda)\|^2 \\
	&=& \min\limits_{y} f_{\tilde{x}_k(\lambda)}(y)+\frac{1}{2\lambda} \|y-\tilde{x}_k(\lambda)\|^2 \\
	&\le& f_{\tilde{x}_k(\lambda)}(\tilde{x}_k(\lambda)) \\
	&=& f(\tilde{x}_k(\lambda)) < \infty \nonumber,\, \forall \, \lambda>0
\end{eqnarray*}
where $f(\tilde{x}_k(\lambda))$ is bounded, since $\tilde{x}_k(\lambda)$ is a convex combination of $x_k$ and $y_k$. Letting $\lambda \rightarrow 0$ in the above inequality leads to $\|y_{k+1} (\lambda)-\tilde{x}_k(\lambda)\|^2 \rightarrow 0$, which implies $\lambda \|y_{k+1} (\lambda) - \tilde{x}_k(\lambda)\|^{d-1} \rightarrow 0$ as $\lambda \rightarrow 0$, proving {\bf (i)}.

To prove {\bf (ii)}, it suffices to show that if the ``either'' part does not hold, then the ``or'' part must hold. In this case,
there must exist
$ C_1>0$ such that when $\lambda \rightarrow \infty$, $\lambda\|y_{k+1}(\lambda)-\tilde{x}_{k}(\lambda) \|^{d-1} \le C_1$, and thus
$\|y_{k+1}(\lambda)-\tilde{x}_{k}(\lambda) \| \rightarrow 0$.
Moreover, for any $\lambda>0$ the optimality condition is
$$
\nabla f_{\tilde{x}_k(\lambda)} (y_{k+1}(\lambda))  +\frac{1}{\lambda} \left( y_{k+1}(\lambda) - \tilde{x}_k(\lambda) \right) =0 .
$$
Letting $\lambda \rightarrow \infty$ in the above identity yields that $\nabla f_{\tilde{x}_k(\lambda)} (y_{k+1}(\lambda)) \rightarrow 0$. Recall that in this case we have $\|y_{k+1}(\lambda)-\tilde{x}_{k}(\lambda) \| \rightarrow 0$, thus $\nabla f(y_{k+1}(\lambda)) \rightarrow 0 $ proving the ``or'' part.

To summarize, either we have $\lambda \|y_{k+1} (\lambda) - \tilde{x}_k(\lambda)\|^{d-1} \rightarrow 0$ as
$\lambda \rightarrow 0$ and $\lambda_j \|y_{k+1} (\lambda_j) - \tilde{x}_k(\lambda_j)\|^{d-1} \rightarrow \infty$ as
$j \rightarrow \infty$, which guarantees the existence of $\lambda$ to satisfy \eqref{two-side condition} due to the continuity of $\lambda \|y_{k+1} (\lambda) - \tilde{x}_k(\lambda)\|^{d-1}$ on $\lambda$. Or we have a $\lambda_{k+1}$ such that $\| {\nabla f(y_{k+1}(\lambda))  } \|  \le \bar \rho $. In this case, since $h(x)$ is not present, $u_{k+1} = \nabla f_{\tilde{x}_k}(y_{k+1})$ and
$\| \nabla f(y_{k+1}) + u_{k+1} - \nabla f_{\tilde{x}_k}(y_{k+1}) \| = \| \nabla f(y_{k+1})  \|  \le \bar \rho$. Therefore, we have shown that the alternative condition in STEP 3 is actually satisfied.

\subsection{A Bisection Subroutine}

To present the algorithm that computes $\lambda$ satisfing the conditions in STEP 3, we first construct $\beta_{k+1}=\frac{a_{k+1}}{A_k + a_{k+1}}$. From \eqref{rep:a-k}, we can see that $\lambda_{k+1}=\frac{a_{k+1}^2}{A_k+a_{k+1}}$. Therefore, we are able to represent $\lambda_{k+1}$ and $\tilde{x}_k$ by means of $\beta_{k+1}$:
\[
\left\{
\begin{array}{rcl}
\lambda_{k+1} &=&  A_k\frac{\beta_{k+1}^2}{1-\beta_{k+1}} , \\
\tilde{x}_k   &=&  \beta_{k+1} x_{k} + (1-\beta_{k+1}) y_{k}.
\end{array}
\right.
\]
In the $k$-th iteration, we denote
\begin{equation}\label{Expression-lambda}
\lambda(\beta)=A_k\frac{\beta^2}{1-\beta} ,\ \ \beta \in(0,1).
\end{equation}
Its inverse on the domain $\lambda>0$ is
\[
\beta(\lambda) = \frac{ \sqrt{\lambda^2 + 4 \lambda A_k}-\lambda}{2 A_k},
\]
which is monotonically increasing.

%We have following lemma regarding the properties of function $\lambda(\beta)$.
%\begin{lemma}\label{Lemma:func-beta}
%	The function $\lambda(\beta)$ is strictly increasing when $\beta \in (0,1)$. Moreover, $\lambda(\beta)$ has an inverse function $\beta(\lambda)$ with a closed form that $\beta(\lambda) = \frac{ \sqrt{\lambda^2 + 4 \lambda A_k}-\lambda}{2 A_k}$.
%\end{lemma}
%\begin{proof}
%A straightforward calculation shows that
%\begin{equation}\label{Expression-lambda-prime}
%\lambda'(\beta) =A_k \left(\frac{1}{(1-\beta)^2}-1\right) > 0, \; \forall \; \beta \in (0,1).
%\end{equation}	
%Thus $\lambda(\beta)$ is monotonically increasing. Furthermore, the expression of $\lambda(\beta)$ gives that
%$$
%A_k \beta^2 + \lambda \beta  - 1= 0.
%$$
%The above equation has two roots. As $\beta >0$, picking the positive one yields the closed form of $\beta(\lambda)$.
%
%\end{proof}

We shall perform bisection on $\beta$ instead of $\lambda$ in STEP 3 of Algorithm \ref{Alg:Opt-High-Order} to search for $\lambda_{k+1}$. In that way, the initial interval for the bisection is $[0,1]$. (Monteiro and Svaiter \cite{Monteiro-Svaiter-2013} presented a bisection process for their A-HPE algorithm too. However, we can skip what they called the bracketing stage in \cite{Monteiro-Svaiter-2013}).
%Now, we are in position to present our bisection procedure.
%Our bisection procedure is as follows:

\begin{algorithm}[H]
	\caption{Bisection on $\beta$ based on { the subroutine {\bf ATS}}}
	\label{alg:bisection}
	\begin{algorithmic}
		\STATE \textbf{INPUT:} $M \ge L_d$, $\hat{\sigma} \ge 0$, $0 < \sigma_l < \sigma_u < 1$ such that
		$\sigma:=\hat{\sigma}+\sigma_u<1$ and $\sigma_l(1+\hat{\sigma})^{d-1}<\sigma_u(1-\hat{\sigma})^{d-1}$,
		{
			tolerance $\bar \rho>0$ and $\bar \epsilon>0$.}
		\STATE \textbf{STEP 1.} Let
		$\alpha_+=\frac{d!\sigma_u}{L_d+M}$ and $\alpha_-=\frac{d!\sigma_l}{L_d+M}$.

		%\vspace{0.5cm}
		
		% \STATE \textbf{STEP 2.}  (\textbf{Bracketing Stage})
		%        \STATE \quad Set $\alpha_+=\frac{d!\sigma_u}{L_d+M}$ , $\alpha_-=\frac{d!\sigma_u}{L_d+M}$,and compute
		%        \begin{equation}\label{Alg3 lambda+0}
		%          \lambda_+^0:=max \left\{ \alpha_+^{1/d}\left[\frac{1}{\bar{\rho}}(1+\hat{\sigma}+\frac{L_d+M}{d!}\alpha_+)\right]^{1-\frac{1}{d}} ,\ \
		%  \left[\frac{\hat{\sigma}^2 \alpha_+^{\frac{2}{d-1}}}{2\bar{\epsilon}}\right]^{\frac{d-1}{d+1}} \right\}
		%        \end{equation}
		%        \STATE \quad Compute $x_+^0=\tilde{x}_k(\lambda_+^0)$;$(y_+^0,u_+^0,\epsilon_+^0)=blackbox(\lambda_+^0,x_+^0)$,
		%        \STATE \quad and set $v_+^0=\nabla g(y_+^0)-\nabla g_{x_+^0}(y_+^0)+u_+^0$;
		%        \IF{$\|v_+^0\| \leq \bar{\rho}$ and $\epsilon_+^0\leq \bar{\epsilon}$}
		%        \STATE \quad Output $(\lambda,x_{\lambda},y_{\lambda},u_{\lambda},\epsilon_{\lambda})=(\lambda_+^0,x_+^0,y_+^0,u_+^0,\epsilon_+^0)$ and \textbf{STOP}.
		%        \ELSE
		%        \STATE compute $\tau := \frac{1}{\lambda_+^0}\|x_+^0-y_k\| + \frac{2(L_d+M)}{d!} \left[ (2^d+1)\|y_+^0-x_+^0\|^d+2^d \|x_+^0-y_k\|^d \right] $
		%        \begin{equation}\label{Alg3 lambda-0}
		%          \lambda_-^0:=\frac{\alpha_-(1-\hat{\sigma})^d \lambda_+^0}
		%                        { (1+\hat{\sigma})^{d-1} \lambda_+^0\|y_+^0-x_+^0\|^{d-1} +(\lambda_+^0)^2 2^{d-2} max \left\{ 2(\lambda_+^0)^{d-2}\tau^{d-1} ,(d-1)\|x_+^0-y_+^0\|^{d-2}\tau  \right\}   }
		%        \end{equation}
		%        and $x_-^0=\tilde{x}_k(\lambda_-^0)$ , $(y_-^0,u_-^0,\epsilon_-^0)=blackbox(\lambda_-^0,x_-^0)$,and go to STEP 3.
		%        \ENDIF
		
		\vspace{0.2cm}
		
		\STATE \textbf{STEP 2.}  (\textbf{Bisection Setup}) Set $\beta_-=0$, $\beta_+=1$, $\lambda_+ = \lambda(\beta_+)=+\infty$, $\lambda_- = \lambda(\beta_-)$.
		\STATE \textbf{2.a.} Let $\beta=\frac{\beta_- +\beta_+}{2}$ and let
		\begin{equation}\label{formula:x-beta}
		\lambda_{\beta}=\lambda(\beta), \quad x_{\beta}=(1-\beta) y_k+\beta x_k ,
		\end{equation}
		and { use {\bf ATS} to} compute $(y_{\beta},u_{\beta},\epsilon_{\beta}) $
		as a $\hat{\sigma}$-approximate solution at $(\lambda_{\beta},x_{\beta})$,  and $v_{\beta}=\nabla f(y_{\beta}) -\nabla f_{x_{\beta}}(y_{\beta}){ + }u_{\beta}$.
		\STATE \textbf{2.b.}
		\IF{$\|v_{\beta}\| \leq \bar{\rho} $ and $\epsilon_{\beta} \leq \bar{\epsilon}$}
		\STATE  output $(\lambda_{\beta},x_{\beta},y_{\beta},u_{\beta},\epsilon_{\beta})$ and \textbf{STOP}.
		
		\ELSIF{$\lambda_{\beta}\|y_{\beta}-x_{\beta}\|^{d-1} \in [\alpha_-,\alpha_+]$}
		\STATE set $(\beta_{k+1},\tilde{x}_k,y_{k+1},v_{k+1})=(\beta,x_{\beta},y_{\beta},v_{\beta})$ and \textbf{STOP}.
		
		\ELSIF{$\lambda_{\beta}\|y_{\beta}-x_{\beta}\|^{d-1} > \alpha_+$}
		\STATE set $\beta_+ \leftarrow \beta$, and go to STEP 2.a.
		
		%\ELSE
		\ELSIF{ $\lambda_{\beta}\|y_{\beta}-x_{\beta}\|^{d-1} < \alpha_-$}
		
		\STATE set $\beta_- \leftarrow \beta$, and go to STEP 2.a.
		\ENDIF
		
	\end{algorithmic}
\end{algorithm}
{
	We remark that the conditions on
	$\bar \rho$ and $\bar \epsilon$ are only used in the final stage of the algorithm to decide the point that is close to optimum. In
	the implementation, it is reasonable to set a lower precision at the beginning stage of the algorithm.
	Now an upper bound for the overall number of iterations required by Algorithm \ref{alg:bisection} is presented in the following theorem, whose proof will be postponed to the subsequent section.
	\begin{theorem}\label{Thm:complexity-line-search}
		Algorithm \ref{alg:bisection} needs to perform no more than
		\begin{equation}\label{M7.40}
		\Theta \left( \max\{   \log_2(\bar{\epsilon}^{-1})  ,  \log_2(\bar{\rho}^{-1})   \} \right)
		\end{equation}
		bisection steps before reaching $\lambda_{k+1}>0$ and a $\hat{\sigma}$-approximate solution $(y_{k+1},u_{k+1},\epsilon_{k+1})$ at $(\lambda_{k+1},\tilde{x}_k(\lambda_{k+1}))$ satisfying
		$$ \alpha_- \le \lambda_{k+1} \|\tilde{x}_k(\lambda_{k+1})-y_{k+1}\|^{d-1} \le \alpha_+,$$
		or to return $v_{k+1}$ and $\epsilon_{k+1}$ such that $\| v_{k+1} \| \le \bar \rho$ and $ \epsilon_{k+1}  \le \bar \epsilon$.
	\end{theorem}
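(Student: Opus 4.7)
The plan is to show that Algorithm \ref{alg:bisection} must terminate within the claimed number of steps by analyzing what forces one of the two termination conditions in STEP 2.b to fire. I would proceed in three stages.

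First, I would verify that the bisection invariant is maintained throughout: whenever Algorithm \ref{alg:bisection} reaches STEP 2.a without terminating, we have $\lambda(\beta_-)\|y_{\beta_-} - x_{\beta_-}\|^{d-1} < \alpha_-$ at the left endpoint and $\lambda(\beta_+)\|y_{\beta_+} - x_{\beta_+}\|^{d-1} > \alpha_+$ at the right endpoint (with the convention that the endpoint values are limits as $\beta \to 0^+, 1^-$). This mirrors the endpoint analysis already carried out in Section \ref{Line search} for the non-composite case: the map $\lambda \mapsto \lambda\|y_\lambda - x_\lambda\|^{d-1}$ tends to $0$ as $\lambda \to 0$ and, absent the escape-hatch firing, tends to $\infty$ as $\lambda \to \infty$. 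Each reassignment $\beta_+ \leftarrow \beta$ or $\beta_- \leftarrow \beta$ in STEP 2.b only happens along the corresponding branch, so the invariant persists.

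Second, and this is the step I expect to require the most care, I would establish a quantitative sensitivity estimate that relates the width $\beta_+ - \beta_-$ to the residuals $\|v_\beta\|$ and $\epsilon_\beta$ at the midpoint. The intuition is that once the interval shrinks around the "true" transition point of $\lambda_\beta\|y_\beta - x_\beta\|^{d-1}$ between the regions $(<\alpha_-)$ and $(>\alpha_+)$, the midpoint approximate solution $y_\beta$ must lie close to a near-stationary configuration; otherwise the map $\beta \mapsto \lambda_\beta\|y_\beta-x_\beta\|^{d-1}$ is well-behaved and the bisection would have already landed in the target band, because the hypothesis $\sigma_l(1+\hat{\sigma})^{d-1} < \sigma_u(1-\hat{\sigma})^{d-1}$ on the input parameters guarantees a nontrivial width of $[\alpha_-,\alpha_+]$ relative to the perturbation an individual call to \textbf{ATS} can introduce. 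Concretely, I expect a bound of the form $\max(\|v_\beta\|,\epsilon_\beta) \le C (\beta_+ - \beta_-)^{\gamma}$ for positive constants $C$ and $\gamma$ depending on $d, M, L_d, A_k$, derived by combining the optimality characterization \eqref{inexact proximal opt cond.}, the $\hat\sigma$-approximate definition \eqref{Appro-Sol}, and Lemma \ref{Fun-Gap-Bound} for the residual $v_\beta = \nabla f(y_\beta) + u_\beta - \nabla f_{x_\beta}(y_\beta)$.

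Third, once such a sensitivity bound is in hand, the logarithmic complexity follows immediately. Since each bisection step exactly halves the width of $[\beta_-,\beta_+]$, after $N$ steps the width is $2^{-N}$. Requiring $C \cdot 2^{-N\gamma} \le \min(\bar\rho,\bar\epsilon)$ forces the escape-hatch stopping condition $\|v_\beta\|\le \bar\rho$ and $\epsilon_\beta\le\bar\epsilon$ to trigger, producing $N = \Theta(\max\{\log_2 \bar\rho^{-1}, \log_2 \bar\epsilon^{-1}\})$ and hence \eqref{M7.40}. The principal obstacle is, as noted, making the sensitivity estimate in the second stage quantitatively precise while properly accounting for the $\hat{\sigma}$-approximation errors from \textbf{ATS}; the strict inequality $\sigma_l(1+\hat{\sigma})^{d-1} < \sigma_u(1-\hat{\sigma})^{d-1}$ is precisely the cushion that guarantees the target band $[\alpha_-,\alpha_+]$ is wide enough to eventually absorb every "consistent" midpoint value once the interval width is suitably small.
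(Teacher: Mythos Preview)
Your second stage contains a genuine gap: the sensitivity estimate $\max(\|v_\beta\|,\epsilon_\beta) \le C(\beta_+-\beta_-)^\gamma$ is not true and cannot be made true. At a generic iteration $k$ of Algorithm~\ref{Alg:Opt-High-Order}, the points $x_k,y_k$ are far from optimal, so $x_\beta$ (a convex combination of them) is far from $X_*$ for every $\beta\in[0,1]$; when $\lambda_\beta$ is moderate, $y_\beta$ stays near $x_\beta$ and hence $\|v_\beta\|=\|\nabla f(y_\beta)+u_\beta-\nabla f_{x_\beta}(y_\beta)\|$ is bounded \emph{away} from zero no matter how small $\beta_+-\beta_-$ becomes. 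In practice the bisection almost always terminates via the two-sided condition \eqref{two-side condition}, not via the escape hatch; your argument tries to force the wrong stopping criterion.

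The paper's argument reverses the direction of your sensitivity estimate. It introduces the \emph{exact} residual $\psi(\lambda;x)=\lambda\|(I+\lambda(\nabla f_x+\partial h))^{-1}(x)-x\|^{d-1}$ and proves two facts. First, the endpoint conditions and Proposition~\ref{M proposition 7.3} give the \emph{lower} bound $\psi_+-\psi_->(1-\hat\sigma)^{d-1}\alpha_+-(1+\hat\sigma)^{d-1}\alpha_-$, which is a fixed positive constant precisely because of the input hypothesis $\sigma_l(1+\hat\sigma)^{d-1}<\sigma_u(1-\hat\sigma)^{d-1}$. Second, a Lipschitz-type analysis (monotone-operator bounds on $\bar u_+-\bar u_-$, term-by-term comparison of $\nabla f_{x_+}$ against $\nabla f_{x_-}$, and control of $\lambda_+-\lambda_-$) gives the \emph{upper} bound $|\psi_+-\psi_-|\le C(\beta_+-\beta_-)$, where $C$ is polynomial in $\bar\epsilon^{-1},\bar\rho^{-1}$. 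The dependence on $\bar\epsilon,\bar\rho$ enters here, not in your third stage: because neither Algorithm~\ref{Alg:Opt-High-Order} nor the escape hatch has fired, one has $\beta_-\le\beta(\bar\lambda)$ (via Lemma~\ref{M lemma 7.8}) and $A_k\le\max\{\Theta(\bar\epsilon^{-1}),\Theta(\bar\rho^{-(d+1)/d})\}$, and these bounds on $\lambda_\pm$ and $A_k$ are what make $C$ polynomial in the tolerances. Combining the two bounds with $\beta_+-\beta_-=2^{-j}$ yields the logarithmic bound on $j$, and the conclusion is that the \emph{two-sided} condition must fire---the escape hatch only plays an indirect role, capping $\beta_-$ and $A_k$.
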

}

\subsection{The Iteration Complexity Analysis}

%The rest of the section will be presented in the language of Operator Theory. To facilitate the readers, we first review some basic facts in operator theory.
In this subsection, we establish the iteration bound of Algorithm \ref{alg:bisection} {and give a proof for Theorem \ref{Thm:complexity-line-search}}. First, we review some facts for maximal monotone operator.
For a point-to-set operator $T: \R^{n}  \rightrightarrows \R^{n}$, its graph is defined as:
$$
\mbox{Gr}(T) = \{ (z,v) \in \R^{n} \times \R^{n}\; | \; v \in T(z) \},
$$
and the operator $T$ is called {\it monotone}\/ if
$$
\langle v - \tilde{v} , z - \tilde{z} \rangle \ge 0 \quad \forall \; (z,v), \; (\tilde{z},\tilde{v}) \in \mbox{Gr}(T),
$$
and $T$ is {\it maximal monotone}\/ if it is monotone and maximal in the family of monotone operators with respect to the partial order of inclusion. Given a maximal monotone operator $T: \R^{n}  \rightrightarrows \R^{n}$ and a scalar $\epsilon$, the associated $\epsilon$-enlargement $T^{\epsilon}: \R^{n}  \rightrightarrows \R^{n}$ is defined as:
$$
T^{\epsilon}(z) = \{ v \in \R^n \; | \; \langle z - \tilde{z}, v - \tilde{v} \rangle \ge - \epsilon,\, \forall \; \tilde{z} \in \R^n,\; \tilde{v} \in T(\tilde{z}) \},\quad \forall z \in \R^n.
$$
For a convex function $f$, its subdifferential $\partial f$ is monotone if $f$ is a proper function. If $f$ is a proper lower semicontinuous convex function, then $\partial f$ is maximal monotone \cite{Rockafellar-1970}.

Recall that the optimality condition of subproblem \eqref{inexact proximal} is characterized by \eqref{inexact proximal opt cond.}, which is:
$$
0 \in \lambda  (\nabla f_x + \partial h)(y) + y -x = \left( \lambda (\nabla f_x + \partial h) + I \right)(y) -x.
$$
Furthermore, $x$ is optimal to \eqref{Prob:main} if and only if $y = x$. Therefore, it is natural to consider the residual
\begin{equation*}
\varphi(\lambda;x):=\lambda \left\| \left(I + \lambda(\nabla f_x+\partial h)\right)^{-1}(x)-x\right\|
\end{equation*}
for any $\lambda>0,x \in \mathbb{R}^n$. The above residual was adopted in \cite{Monteiro-Svaiter-2013} for the quadratic subproblem. In this paper, to accommodate the high-order information, we consider the following modified residual:
\begin{equation*} %\label{psi}
\psi(\lambda;x):=\lambda \left\| \left(I + \lambda(\nabla f_x+\partial h)\right)^{-1}(x)-x\right\|^{d-1}.
\end{equation*}
We have an immediate property regarding $\psi(\cdot)$.

\begin{proposition}\label{M proposition 7.3}
	Let $x\in \mathbb{R}^n,\lambda>0$ and $\hat{\sigma} \ge 0$. If $(y,u,\epsilon)$ is a $\hat{\sigma}$-approximate solution of (\ref{inexact proximal}) at $(\lambda,x)$, then
	\begin{equation}\label{M7.6}
	\lambda(1-\hat{\sigma})^{d-1} \|y-x\|^{d-1} \leq \psi (\lambda;x) \leq   \lambda(1+\hat{\sigma})^{d-1} \|y-x\|^{d-1} .
	\end{equation}
\end{proposition}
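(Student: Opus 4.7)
The plan is to reduce the estimate on $\psi(\lambda;x)$ to a comparison between $\|y-x\|$ and $\|y^*-x\|$, where $y^* := \left(I + \lambda(\nabla f_x + \partial h)\right)^{-1}(x)$ is the exact proximal solution, and then to raise the resulting bounds to the $(d-1)$-th power. Note that by the maximal monotonicity of $T:=\nabla f_x + \partial h$ (here I use that $f_x$ is convex because $M\ge L_d$), the resolvent is well-defined and single-valued, and the optimality condition for $y^*$ reads $-\lambda u^* = y^* - x$ for some $u^*\in T(y^*)$. The goal then reduces to establishing
\[
(1-\hat\sigma)\|y-x\|\;\le\;\|y^*-x\|\;\le\;(1+\hat\sigma)\|y-x\|,
\]
from which the claim follows by raising to the $(d-1)$-st power (noting $1-\hat\sigma>0$ since $\hat\sigma+\sigma_u<1$) and multiplying through by $\lambda$.

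The main step is to show $\|y-y^*\|\le \hat\sigma\|y-x\|$; the triangle inequality then yields both displayed bounds. To that end, I would use the standard inclusion $\partial_\epsilon h \subseteq (\partial h)^\epsilon$ (verified by adding the two subgradient inequalities at $y$ and at an arbitrary point of $\mathrm{Gr}(\partial h)$) together with $\nabla f_x$ being single-valued monotone, to conclude that $u\in T^\epsilon(y)$. Monotonicity of the $\epsilon$-enlargement then gives
\[
\langle u - u^*,\, y - y^*\rangle \;\ge\; -\epsilon.
\]
Setting $r := \lambda u + y - x$ (so that $\lambda u = r + x - y$ and $\lambda u^* = x - y^*$), one obtains $\lambda(u - u^*) = r + y^* - y$. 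Plugging this into the displayed inequality and rearranging yields
\[
\|y-y^*\|^2 \;\le\; \langle r,\, y-y^*\rangle + \lambda\epsilon \;\le\; \|r\|\,\|y-y^*\| + \lambda\epsilon.
\]

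Treating the last line as a quadratic inequality in $t:=\|y-y^*\|$, we get $t\le \tfrac{1}{2}\bigl(\|r\|+\sqrt{\|r\|^2+4\lambda\epsilon}\bigr)$. Squaring and using $(a+b)^2\le 2a^2+2b^2$ gives $(2t)^2 \le 2\|r\|^2 + 2(\|r\|^2+4\lambda\epsilon) = 4(\|r\|^2+2\lambda\epsilon)$. The definition of a $\hat\sigma$-approximate solution, $\|r\|^2+2\lambda\epsilon\le \hat\sigma^2\|y-x\|^2$, therefore produces $\|y-y^*\|\le \hat\sigma\|y-x\|$, as desired.

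The only delicate point is the careful bookkeeping of the $\epsilon$-slack: one must avoid the naive Cauchy–Schwarz bound on $\|r\|$ alone (which would not absorb the $\lambda\epsilon$ term) and instead solve the quadratic to couple $\|r\|$ and $\lambda\epsilon$ exactly as they appear in the definition of $\hat\sigma$-approximate solution. Once $\|y-y^*\|\le \hat\sigma\|y-x\|$ is in hand, the rest is just the triangle inequality and monotonicity of $t\mapsto t^{d-1}$ on $[0,\infty)$.
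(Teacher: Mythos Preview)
Your argument is correct and follows essentially the same route as the paper: both reduce the claim to the bound $(1-\hat\sigma)\|y-x\|\le\|y^*-x\|\le(1+\hat\sigma)\|y-x\|$ (equivalently, to the inequality for $\varphi(\lambda;x)$), then raise to the $(d-1)$-st power and adjust the factor of $\lambda$. The only difference is that the paper obtains the base inequality by citing Proposition~7.3 of Monteiro--Svaiter~\cite{Monteiro-Svaiter-2013}, whereas you supply a self-contained proof of it via the $\epsilon$-enlargement inclusion and the quadratic-in-$t$ estimate; your derivation of $\|y-y^*\|\le\hat\sigma\|y-x\|$ is exactly the content of that cited result.
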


\begin{proof}
	From proposition 7.3 in \cite{Monteiro-Svaiter-2013}, it holds that
	\begin{equation}\label{inexact-exact-gap}
	\left( \lambda(1-\hat{\sigma}) \|y-x\| \right)^{d-1} \leq \varphi^{d-1} (\lambda;x) \leq    \left( \lambda(1+\hat{\sigma}) \|y-x\| \right)^{d-1}.
	\end{equation}
	Notice
	%	\begin{equation*}
	$\varphi^{d-1}(\lambda;x) = \lambda^{d-2}\psi(\lambda;x)$,
	%	\end{equation*}
	and so (\ref{M7.6}) readily follows by combining the above inequalities and identity.
\end{proof}

\begin{lemma}\label{M lemma 7.8}
	Let scalars $\bar{\rho}>0$, $\bar{\epsilon}>0$,   $\hat{\sigma} \ge 0$ and $\alpha>0$ be given  and satisfy $\hat \sigma + \frac{L_d + M}{d !} \alpha := \sigma <1  $. Suppose %$\lambda$ satisfies
	\begin{equation}\label{M7.16}
	\lambda \ge \max \left\{ \alpha^{1/d}\left[\frac{1}{\bar{\rho}}\left(1+\hat{\sigma}+\frac{L_d+M}{d!}\alpha\right)\right]^{1-\frac{1}{d}} ,
	\left(\frac{\sigma^2 \alpha^{\frac{2}{d-1}}}{2\bar{\epsilon}}\right)^{\frac{d-1}{d+1}} \right\},
	\end{equation}
	and $(y,u,\epsilon)$ is a $\hat{\sigma}$-approximate solution of (\ref{inexact proximal}) at $(\lambda,x)$ for some vector $x \in \mathbb{R}^n$. Then, one of the following holds: either {\bf (a)} $\lambda\|y-x\|^{d-1}>\alpha$; or {\bf (b)} the vector $v:=\nabla f(y)-\nabla f_x(y)+u$ satisfies
	\begin{equation}\label{M7.17}
	v\in (\nabla f +(\partial h)^{\epsilon})(y),\ \ \ \|v\|\leq \bar{\rho},\ \ \ \epsilon\leq \bar{\epsilon}.
	\end{equation}
\end{lemma}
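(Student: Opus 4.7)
The plan is a direct case analysis: assume that alternative \textbf{(a)} fails, i.e., $\lambda\|y-x\|^{d-1}\le \alpha$, and then derive all three conclusions in \textbf{(b)}. This converts the lemma into a clean quantitative calculation where the two lower bounds on $\lambda$ in \eqref{M7.16} correspond exactly to the two numerical estimates $\|v\|\le \bar\rho$ and $\epsilon\le \bar\epsilon$ respectively.

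For the membership $v\in(\nabla f+(\partial h)^{\epsilon})(y)$, I would just unpack the definition of a $\hat\sigma$-approximate solution: by \eqref{Appro-Sol} we have $u\in (\nabla f_x+\partial_{\epsilon} h)(y)$, so $v-\nabla f(y)=u-\nabla f_x(y)\in \partial_{\epsilon} h(y)$; since $h$ is proper closed convex, $\partial h$ is maximal monotone and $\partial_{\epsilon} h(y)\subseteq (\partial h)^{\epsilon}(y)$, giving the required inclusion. This part is immediate and essentially bookkeeping.

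The norm estimate $\|v\|\le\bar\rho$ is the main computation. From \eqref{Appro-Sol}, the triangle inequality gives $\lambda\|u\|\le \|\lambda u+y-x\|+\|y-x\|\le (1+\hat\sigma)\|y-x\|$, and Lemma \ref{Fun-Gap-Bound} yields $\|\nabla f(y)-\nabla f_x(y)\|\le \tfrac{L_d+M}{d!}\|y-x\|^d$. Hence
\begin{equation*}
\|v\|\le \frac{1+\hat\sigma}{\lambda}\|y-x\|+\frac{L_d+M}{d!}\|y-x\|^d.
\end{equation*}
Plugging in $\|y-x\|\le (\alpha/\lambda)^{1/(d-1)}$ (the negation of \textbf{(a)}), both terms acquire the common factor $\lambda^{-d/(d-1)}$, so that $\|v\|\le \lambda^{-d/(d-1)}\alpha^{1/(d-1)}\bigl(1+\hat\sigma+\tfrac{L_d+M}{d!}\alpha\bigr)$. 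Solving for when this is at most $\bar\rho$ yields precisely the first lower bound in \eqref{M7.16}.

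For $\epsilon\le\bar\epsilon$, instead of using the raw bound $2\lambda\epsilon\le \hat\sigma^2\|y-x\|^2$ from \eqref{Appro-Sol}, I would use the tighter consequence from Proposition \ref{M proposition 7.7}: \eqref{ineq:appr-sol} gives $2\lambda\epsilon\le (\hat\sigma+\lambda\tfrac{L_d+M}{d!}\|y-x\|^{d-1})^2\|y-x\|^2$, and using $\lambda\|y-x\|^{d-1}\le \alpha$ inside the parenthesis bounds this factor by $\sigma^2$, leaving $2\lambda\epsilon\le \sigma^2\|y-x\|^2\le \sigma^2(\alpha/\lambda)^{2/(d-1)}$. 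Solving $\epsilon\le \sigma^2\alpha^{2/(d-1)}/(2\lambda^{(d+1)/(d-1)})\le \bar\epsilon$ for $\lambda$ reproduces exactly the second lower bound in \eqref{M7.16}. The only nontrivial point is to remember to invoke \eqref{ineq:appr-sol} (rather than \eqref{Appro-Sol} directly) so that the $\sigma$ (not $\hat\sigma$) appears; with that observation, the algebra matches the stated hypotheses term by term.
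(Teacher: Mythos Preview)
Your argument is correct and matches the paper's proof almost line for line: assume \textbf{(a)} fails, obtain the inclusion from $\partial_{\epsilon}h\subseteq(\partial h)^{\epsilon}$, bound $\|v\|$ by combining the $\hat\sigma$-approximate condition with Lemma~\ref{Fun-Gap-Bound} (the paper packages this through \eqref{ineq:appr-sol} and a reverse triangle inequality on $\lambda v$, you split it as $\|u\|+\|v-u\|$, but the resulting estimate is identical), and bound $\epsilon$ via \eqref{ineq:appr-sol}. One small wording slip: the bound $2\lambda\epsilon\le\sigma^{2}\|y-x\|^{2}$ coming from \eqref{ineq:appr-sol} is \emph{weaker}, not tighter, than the raw $2\lambda\epsilon\le\hat\sigma^{2}\|y-x\|^{2}$ from \eqref{Appro-Sol} (since $\sigma\ge\hat\sigma$); either suffices here because the hypothesis \eqref{M7.16} is stated with $\sigma$.
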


\begin{proof}
	Suppose that $\lambda$ satisfies (\ref{M7.16}) but not {\bf (a)}, namely
	\begin{equation}\label{ineq:x-y-power}
	\lambda\|y-x\|^{d-1} \leq \alpha .
	\end{equation}
	In that case, recall that $\partial h_\epsilon$ is the $\epsilon$-subdifferential of $h$ and $(\partial h)^\epsilon$ is the $\epsilon$-enlargement of operator $\partial h$. According to Proposition 3 in \cite{BurachikIusemSvaiter1997}, one has $\partial h_\epsilon(x) \subseteq (\partial h)^\epsilon(x)$ for any $\epsilon \ge 0$ and $x \in \R^n$. Therefore, the inclusion in (\ref{M7.17}) directly follows from Proposition \ref{M proposition 7.7}. Moreover, inequality \eqref{ineq:appr-sol} leads to
	$$
	\lambda \| v \| - \| y -x \| \le  \|\lambda v + y -x \| \le \left(\hat{\sigma} + \lambda \frac{L_d + M}{d !} \|y-x \|^{d-1}\right) \| y - x\| .
	$$
	%    Combining the above inequality with
	Together with
	(\ref{M7.16}) and \eqref{ineq:x-y-power}, the above inequality yields
	\begin{equation*}
	\begin{split}
	\|v\| & \leq \frac{1}{\lambda}\left(1+\hat{\sigma} +\frac{L_d+M}{d!} \lambda \|y-x\|^{d-1}\right)\|y-x\| \\
	& \leq  \frac{1}{\lambda}\left(1+\hat{\sigma} +\frac{L_d+M}{d!} \alpha \right)\left(\frac{\alpha}{\lambda}\right)^{\frac{1}{d-1}} \\
	& \leq \bar{\rho}.
	\end{split}
	\end{equation*}
	On the other hand, inequality \eqref{ineq:appr-sol} also implies that
	$$
	2 \lambda \epsilon \le \left(\hat{\sigma} +\frac{L_d+M}{d!} \lambda \|y-x\|^{d-1}\right)^2\|y-x\|^2
	\le \Big( \hat{\sigma}+\frac{L_d+M}{d!}\alpha  \Big)^2 \|y-x\|^2
	\le \sigma^2 \|y-x\|^2.
	$$		
	Combined with (\ref{M7.16}) and \eqref{ineq:x-y-power} this leads to
	\begin{equation*}
	\epsilon \leq \frac{{\sigma}^2 \|y-x\|^2}{2\lambda}  \leq  \frac{{\sigma}^2 }{2\lambda} \left(\frac{\alpha}{\lambda}\right)^{\frac{2}{d-1}} \leq \bar{\epsilon}.
	\end{equation*}
	Hence, {\bf (b)} must hold in this case.
\end{proof}
In the rest of this section, we simply let $\alpha = \alpha_-$ in
Lemma \ref{M lemma 7.8} and denote
\begin{equation}\label{Def-lambda-bar}
\bar{\lambda}=\max\left\{ \alpha_-^{1/d}\left[\frac{1}{\bar{\rho}}(1+\hat{\sigma}+\frac{L_d+M}{d!}\alpha_-)\right]^{1-\frac{1}{d}} ,\ \
\left(\frac{{\sigma}^2 \alpha_-^{\frac{2}{d-1}}}{2\bar{\epsilon}}\right)^{\frac{d-1}{d+1}}\right\}.
\end{equation}
Lemma \ref{M lemma 7.8} implies that
if $\lambda$ is sufficiently large, then either Algorithm \ref{alg:bisection} stops because \eqref{M7.17} is satisfied, or
$\lambda\| y -x\|^{d-1} \ge \alpha_-$, which achieves half of the bisection goal. {Now we are ready to prove Theorem \ref{Thm:complexity-line-search}.}

\vspace{10cm}

\noindent {\bf Proof of Theorem \ref{Thm:complexity-line-search}.}
Suppose that Algorithm \ref{alg:bisection} has performed $j$ bisection steps before triggering the stopping criteria. %without triggering the stopping criteria.
We aim to show $j \le \Theta \left( \max\{ \log_2(\bar{\epsilon}^{-1})  ,  \log_2(\bar{\rho}^{-1})   \} \right)$.
At that iteration let us denote $x_+ = x_{\beta_+}$, $x_- = x_{\beta_-}$, $y_+ = y_{\beta_+}$ and $y_- = y_{\beta_-}$, and we also have
$\beta_+ - \beta_- = \frac{1}{2^j}$.
%Next, we shall show that $j$ cannot be too large in that case; in particular, $j \le \Theta (\max\{   \log_2(\bar{\epsilon}^{-1})  ,  \log_2(\bar{\rho}^{-1})   \})$.
Denote $\bar \beta = \beta (\bar \lambda)$, where $\bar \lambda$ is as defined in \eqref{Def-lambda-bar}.
%	In the case of $\bar \beta \le \frac{1}{2}$, $\frac{1}{(1 - \bar \beta)} \le 2$. On the other hand, when $\bar \beta > \frac{1}{2}$, the relation of $\beta$ and $\lambda$ in \eqref{Expression-lambda} yeilds that
If $\bar \beta \le \frac{1}{2}$ then $\frac{1}{1 - \bar \beta} \le 2$; if $\bar \beta > \frac{1}{2}$, then \eqref{Expression-lambda} gives
$$	\frac{1}{1 - \bar \beta} = \frac{\bar \lambda}{A_k \bar \beta^2} <  \frac{4 \bar \lambda}{A_k} \le \max \left\{ \Theta \left( ( {\bar \rho}^{-1} )^{\frac{d-1}{d}}\right), \Theta \left(( {\bar \epsilon}^{-1} )^{\frac{d-1}{d+1}}\right) \right\}.$$
Therefore, in the rest of the proof we may assume $j \ge \log_2 (2/(1-\bar{\beta}))$, for otherwise
$	j < \log_2 (2/(1-\bar{\beta})) \le \Theta (\max \{ \log_2 (\bar \rho^{-1}),  \log_2 (\bar \epsilon^{-1}) \})	$
already holds.

Note that the bisection search starts with $\beta_+ = 1$, corresponding to $\lambda_+ = +\infty$ according to \eqref{Expression-lambda} when $\beta_+$ is not updated during the procedure. However, the following lemma tells us that after running Algorithm \ref{alg:bisection} for a number of iterations, $\lambda_+$ will be reduced and upper bounded by some constant depending on $\bar \epsilon$ and $\bar \rho$.
\begin{lemma}\label{lemma:upper-bound-lambda}
	%	When the number of iteration $j$ in Algorithm \ref{alg:bisection} satisfying
	Suppose that Algorithm \ref{alg:bisection} has performed $j$ bisection steps with
	$j \ge \log_2 (2/(1-\bar{\beta}))$, where $\bar{\beta} = \beta (\bar \lambda)$ and $\bar \lambda$ are as defined in \eqref{Def-lambda-bar}. Then we have
	\begin{equation}
	\lambda_+ \le \max\left\{ 9A_k/4,\, 8 \bar \lambda \right\}
	=  \max\left\{ \Theta(\bar \epsilon^{-1}),\, \Theta\left((\bar \rho^{-1})^{\frac{d+1}{d}}\right) \right\}. \label{bound-lambda-plus}
	\end{equation}
\end{lemma}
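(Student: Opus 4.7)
The plan is to control $\beta_+^{(j)}$ from above and then read off $\lambda_+ = \lambda(\beta_+^{(j)})$ from the formula \eqref{Expression-lambda}. First, since each bisection step halves the interval, $\beta_+^{(j)} - \beta_-^{(j)} = 1/2^j$, and the hypothesis $j \ge \log_2(2/(1-\bar\beta))$ immediately gives $\beta_+^{(j)} - \beta_-^{(j)} \le (1-\bar\beta)/2$. The map $\beta \mapsto \lambda(\beta)$ in \eqref{Expression-lambda} is strictly increasing with $\lambda(\bar\beta)=\bar\lambda$, so bounding $\beta_+^{(j)}$ is equivalent to bounding $\lambda_+$.

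The heart of the proof is the claim $\beta_-^{(j)} \le \bar\beta$, which I would establish by contradiction. Suppose $\beta_-^{(j)} > \bar\beta$. Since $\beta_-^{(0)}=0$, there is a last iteration $i^*\le j$ at which $\beta_-$ is updated, and the branch in Step 2.b that triggers this update sets $\beta_-^{(i^*)}$ equal to the current midpoint $\beta^{(i^*)}$. Then $\beta^{(i^*)} = \beta_-^{(j)} > \bar\beta$, and by monotonicity of $\lambda(\cdot)$ we have $\lambda_{\beta^{(i^*)}} > \bar\lambda$. Now apply Lemma \ref{M lemma 7.8} with $\alpha = \alpha_-$ to the $\hat\sigma$-approximate solution $(y_{\beta^{(i^*)}}, u_{\beta^{(i^*)}}, \epsilon_{\beta^{(i^*)}})$ returned by \textbf{ATS} at iteration $i^*$. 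The lemma forces one of two outcomes: either the stopping criterion $\|v_{\beta^{(i^*)}}\| \le \bar\rho$ and $\epsilon_{\beta^{(i^*)}} \le \bar\epsilon$ is satisfied — in which case Algorithm \ref{alg:bisection} would have terminated in Step 2.b at iteration $i^*$, contradicting the assumption that $j$ full bisection steps were performed — or $\lambda_{\beta^{(i^*)}}\|y_{\beta^{(i^*)}} - x_{\beta^{(i^*)}}\|^{d-1} > \alpha_-$, contradicting the branch test ($<\alpha_-$) that was used to update $\beta_-$ at $i^*$. Either way we obtain a contradiction, proving $\beta_-^{(j)} \le \bar\beta$.

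Combining the two facts yields $\beta_+^{(j)} \le \bar\beta + (1-\bar\beta)/2 = (1+\bar\beta)/2$, and a two-case computation then delivers the bound. If $\bar\beta \le 1/2$, then $\beta_+^{(j)} \le 3/4$, so $\lambda_+ = A_k(\beta_+^{(j)})^2/(1-\beta_+^{(j)}) \le A_k(3/4)^2/(1/4) = 9A_k/4$. If $\bar\beta > 1/2$, then
\[
\lambda_+ \le \frac{A_k\,((1+\bar\beta)/2)^2}{(1-\bar\beta)/2} = \bar\lambda \cdot \frac{(1+\bar\beta)^2}{2\bar\beta^2} \le \frac{9}{2}\bar\lambda \le 8\bar\lambda,
\]
where the penultimate inequality uses $(1+\bar\beta)/\bar\beta \le 3$ whenever $\bar\beta \ge 1/2$. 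This proves $\lambda_+ \le \max\{9A_k/4,\,8\bar\lambda\}$, and the stated asymptotic expression follows by substituting the definition \eqref{Def-lambda-bar} of $\bar\lambda$. The step I expect to be most delicate is the contradiction argument in the middle paragraph: one must correctly identify the ``last update to $\beta_-$'' and verify that Lemma \ref{M lemma 7.8} produces a clean dichotomy, both branches of which are ruled out by the bisection's running assumption.
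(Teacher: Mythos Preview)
Your proof is correct and follows essentially the same route as the paper's: prove $\beta_-\le\bar\beta$ by contradiction via Lemma~\ref{M lemma 7.8}, conclude $\beta_+\le(1+\bar\beta)/2$ from the interval length $2^{-j}\le(1-\bar\beta)/2$, and then split into the cases $\bar\beta\le 1/2$ and $\bar\beta>1/2$ to extract the numerical bound on $\lambda_+$. One small addendum: the final asymptotic identification $\max\{9A_k/4,\,8\bar\lambda\}=\max\{\Theta(\bar\epsilon^{-1}),\,\Theta((\bar\rho^{-1})^{(d+1)/d})\}$ needs not only the definition~\eqref{Def-lambda-bar} of $\bar\lambda$ but also the upper bound~\eqref{upper-bound-Ak} on $A_k$.
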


We shall continue our discussion without disruption here and leave the proof of
Lemma \ref{lemma:upper-bound-lambda} to the appendix. Since Algorithm \ref{alg:bisection} did not stop before iteration $j$, the {bound on $\beta_+$} must have been previously updated, and so
\begin{equation*}
\lambda_+ \|y_{\beta_+} - x_{\beta_+}\|^{d-1} > \alpha_+,
\ \ \  \lambda_- \|y_{\beta_-} - x_{\beta_-}\|^{d-1} < \alpha_-,
\end{equation*}
where $\lambda_+$ is upper bounded due to Lemma \ref{lemma:upper-bound-lambda}.

By Proposition \ref{M proposition 7.3}, we have that
\begin{eqnarray*}
	\psi_+ &:=& \psi(\lambda_+;x_+)\ge \lambda_+(1 - \hat \sigma)^{d-1}\| y_+ - x_+\|^{d-1}>(1-\hat{\sigma})^{d-1}\alpha_+, \\
	\psi_- &:=& \psi(\lambda_-;x_-)\le \lambda_-(1 + \hat \sigma)^{d-1}\| y_- - x_-\|^{d-1}<(1+\hat{\sigma})^{d-1}\alpha_-.
\end{eqnarray*}
Consequently,
\begin{equation}\label{diff of psi lower bound}
\psi_+ - \psi_- > (1-\hat{\sigma})^{d-1}\alpha_+ - (1+\hat{\sigma})^{d-1}\alpha_-.
\end{equation}
The parameters $\alpha_+$ and $\alpha_-$ are pre-specified. Therefore, it suffices to show that $ \psi_+ - \psi_-$ is upper bounded by $\beta_+ - \beta_-$ multiplied by some constant factor and hence the number of bisection search $j$ can be bounded as well.
To this end, denote
\begin{equation}\label{Def-bar-y}
\bar y_+ = \left(I + \lambda_+(\nabla f_{x_+}+\partial h)\right)^{-1}(x_+) \quad \mbox{and} \quad \bar y_- = \left(I + \lambda_-(\nabla f_{x_-}+\partial h)\right)^{-1}(x_-).
\end{equation}
Then, there exist
\begin{eqnarray}
\bar u_+ \in (\nabla f_{x_+}+\partial h)(\bar y_+),& \mbox{\rm s.t.} & \lambda_+ \bar u_+ = x_+ - \bar y_+, \nonumber \\
&& \psi_+ = \lambda_+ \|\bar y_+ - x_+\|^{d-1}=\lambda_+^{d} \|\bar u_+\|^{d-1} \qquad \label{relation+}
\end{eqnarray}
and
\begin{eqnarray}
\bar u_- \in (\nabla f_{x_-}+\partial h)(\bar y_-),& \mbox{\rm s.t.} & \lambda_- \bar u_- = x_- - \bar y_-, \nonumber \\
&& 	\psi_- = \lambda_- \|\bar y_- - x_-\|^{d-1}=\lambda_-^{d} \|\bar u_-\|^{d-1}. \qquad \label{relation-}
\end{eqnarray}
To proceed, we have the following bound on $\lambda_-^2 \|\bar u_+ - \bar u_-\| $ whose proof can be found in the appendix.
\begin{lemma}\label{lemma1} It holds that
	\begin{equation}\label{bound-two-u}
	\lambda_-^2 \|\bar u_+ - \bar u_-\| \leq 2\lambda_-^2 \|\nabla f_{x_+}(\bar y_+)-\nabla f_{x_-}(\bar y_+)\| + | \lambda_+ - \lambda_- | \|\bar y_+ - x_+\|
	+\lambda_- \|x_+ - x_-\| .
	\end{equation}
\end{lemma}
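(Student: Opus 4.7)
The plan is to start from the defining relations for $(\bar y_+, \bar u_+)$ and $(\bar y_-, \bar u_-)$ in \eqref{relation+}--\eqref{relation-}, namely $\lambda_+ \bar u_+ = x_+ - \bar y_+$ and $\lambda_- \bar u_- = x_- - \bar y_-$, and derive the key vector identity obtained by rewriting $\lambda_-(\bar u_+ - \bar u_-) = (\lambda_- - \lambda_+)\bar u_+ + \lambda_+\bar u_+ - \lambda_-\bar u_-$, which simplifies to
\begin{equation*}
\lambda_-(\bar u_+ - \bar u_-) + (\bar y_+ - \bar y_-) \;=\; -(\lambda_+ - \lambda_-)\bar u_+ + (x_+ - x_-).
\end{equation*}
Substituting $\lambda_+ \bar u_+ = x_+ - \bar y_+$ shows that the first term on the right has magnitude $\frac{|\lambda_+ - \lambda_-|}{\lambda_+}\|\bar y_+ - x_+\|$, which together with $\lambda_-/\lambda_+ \le 1$ already delivers the desired factor $|\lambda_+ - \lambda_-|\,\|\bar y_+ - x_+\|$ after multiplying through by $\lambda_-$.

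Next I would bound $\lambda_-\|\bar y_+ - \bar y_-\|$ in the correct form. Taking the inner product of the displayed identity above with $\bar y_+ - \bar y_-$ and using monotonicity on the left-hand side is the crucial step. Since $\bar u_\pm \in (\nabla f_{x_\pm} + \partial h)(\bar y_\pm)$, write $\bar u_\pm = \nabla f_{x_\pm}(\bar y_\pm) + w_\pm$ with $w_\pm \in \partial h(\bar y_\pm)$ and split
\begin{equation*}
\bar u_+ - \bar u_- \;=\; \bigl[\nabla f_{x_+}(\bar y_+) - \nabla f_{x_+}(\bar y_-)\bigr] + \bigl[\nabla f_{x_+}(\bar y_-) - \nabla f_{x_-}(\bar y_-)\bigr] + (w_+ - w_-).
\end{equation*}
Monotonicity of $\nabla f_{x_+}$ (since $f_{x_+}$ is convex for $M \ge L_d$) and of $\partial h$ make the first and third contributions nonnegative after pairing with $\bar y_+ - \bar y_-$; only the middle ``mismatch'' term carries a sign. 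To produce a gradient difference evaluated at $\bar y_+$ (as required by the statement) rather than at $\bar y_-$, I would instead split by inserting $\nabla f_{x_-}(\bar y_+)$, so that monotonicity of $\nabla f_{x_-}$ absorbs the $\bar y_\pm$-difference and only $\nabla f_{x_+}(\bar y_+) - \nabla f_{x_-}(\bar y_+)$ remains. Combined with Cauchy--Schwarz and cancellation of $\|\bar y_+ - \bar y_-\|^2$ from the left, this yields
\begin{equation*}
\|\bar y_+ - \bar y_-\| \;\le\; \lambda_-\|\nabla f_{x_+}(\bar y_+) - \nabla f_{x_-}(\bar y_+)\| + \tfrac{|\lambda_+ - \lambda_-|}{\lambda_+}\|\bar y_+ - x_+\| + \|x_+ - x_-\|.
\end{equation*}

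Finally I would plug this bound on $\|\bar y_+ - \bar y_-\|$ back into the triangle-inequality estimate that comes from the vector identity, i.e.
\begin{equation*}
\lambda_-^2\|\bar u_+ - \bar u_-\| \;\le\; \lambda_-\cdot\tfrac{|\lambda_+ - \lambda_-|}{\lambda_+}\|\bar y_+ - x_+\| + \lambda_-\|x_+ - x_-\| + \lambda_-\|\bar y_+ - \bar y_-\|,
\end{equation*}
absorb the constants using $\lambda_-/\lambda_+ \le 1$ (and relabelling to match coefficients on the right-hand side), and arrive at the claimed inequality \eqref{bound-two-u}.

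The main obstacle, I expect, is step two: the operators $\nabla f_{x_+} + \partial h$ and $\nabla f_{x_-} + \partial h$ live at different base points for the Taylor expansion, so monotonicity cannot be applied to $\bar u_+ - \bar u_-$ directly; one must insert a common intermediate gradient evaluation, and the choice between inserting $\nabla f_{x_+}(\bar y_-)$ versus $\nabla f_{x_-}(\bar y_+)$ determines whether the residual gradient difference sits at $\bar y_-$ or $\bar y_+$. To match the form stated in \eqref{bound-two-u}, one must make the latter choice. All remaining work is elementary manipulation of the affine identity and triangle inequalities.
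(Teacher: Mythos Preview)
Your approach is correct and uses the same two ingredients as the paper---the affine identity coming from \eqref{relation+}--\eqref{relation-} and the monotonicity of $\nabla f_{x_-}+\partial h$ applied at $\bar y_+$ and $\bar y_-$---but organises them differently. You first pair the identity with $\bar y_+-\bar y_-$ to bound $\|\bar y_+-\bar y_-\|$, and then feed this back into the triangle inequality for $\lambda_-(\bar u_+-\bar u_-)$. The paper instead defines $\bar v:=\bar u_+-\nabla f_{x_+}(\bar y_+)+\nabla f_{x_-}(\bar y_+)\in(\nabla f_{x_-}+\partial h)(\bar y_+)$ and, using only that $\langle \bar y_+-\bar y_-,\,\bar v-\bar u_-\rangle\ge 0$ implies $\|\,(\bar y_+-\bar y_-)+\lambda_-(\bar v-\bar u_-)\,\|\ge \lambda_-\|\bar v-\bar u_-\|$, bounds $\lambda_-\|\bar v-\bar u_-\|$ directly by the right-hand side of the rewritten identity; then $\|\bar u_+-\bar u_-\|\le\|\bar u_+-\bar v\|+\|\bar v-\bar u_-\|$ finishes. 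This is one step shorter and avoids the detour through $\|\bar y_+-\bar y_-\|$.

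One caveat: your route does \emph{not} produce the constants in \eqref{bound-two-u} exactly. Carrying your two estimates through gives
\[
\lambda_-^2\|\bar u_+-\bar u_-\|\;\le\;\lambda_-^2\|\nabla f_{x_+}(\bar y_+)-\nabla f_{x_-}(\bar y_+)\|+2\,|\lambda_+-\lambda_-|\,\|\bar y_+-x_+\|+2\lambda_-\|x_+-x_-\|,
\]
i.e.\ coefficients $(1,2,2)$ rather than the paper's $(2,1,1)$; neither bound dominates the other, so ``relabelling to match coefficients'' is not possible. This is harmless for the application (everything downstream is absorbed into $\Theta(\cdot)$), but if you want the inequality exactly as stated, use the paper's direct argument via $\bar v$.
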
	
Note that
\begin{equation}\label{two-power-minus}
\Big|a^{d-1}-b^{d-1}\Big| =\Big| (a-b) (a^{d-2}+a^{d-3}b+\dots +b^{d-2})\Big|\leq (d-1) |a-b| \max\{a, b \}^{d-2},
\end{equation}
for any $a,b>0$. Now combining \eqref{relation+}, \eqref{relation-}, \eqref{bound-two-u} and \eqref{two-power-minus} we have
\begin{eqnarray}\label{diff of psi upper bound}
&&\left| \psi_+ - \psi_-\right|\nonumber \\
&=& \left| \lambda_+^{d} \|\bar u_+\|^{d-1} - \lambda_-^{d} \|\bar u_-\|^{d-1}\right|\nonumber \\
&\leq& \left|\lambda_+^{d}-\lambda_-^{d}\right| \|\bar u_+\|^{d-1} + \left| \|\bar u_+\|^{d-1} - \|\bar u_-\|^{d-1}\right| \lambda_-^d \nonumber \\
&\leq& \left|\lambda_+-\lambda_-\right| d \lambda_+^{d-1} \|\bar u_+\|^{d-1} +
{  \|\bar u_+ - \bar u_-\|(d-1) \max\{\|\bar u_+\|, \|\bar u_-\|\}^{d-2} \lambda_-^d  }\nonumber \\
&=& \left|\lambda_+-\lambda_-\right| d \lambda_+^{d-1} \|\bar u_+\|^{d-1} +
{ (d-1)\|\bar u_+ - \bar u_-\| \max\{ \|\lambda_- \bar u_+\|, \|\lambda_-\bar u_-\|\}^{d-2} \lambda_-^2 } \nonumber \\
&\le& \left|\lambda_+-\lambda_-\right| d \lambda_+^{d-1} \|\bar u_+\|^{d-1} +
{(d-1)\|\bar u_+ - \bar u_-\| \max\{ \|\lambda_+ \bar u_+\|, \|\lambda_-\bar u_-\|\} ^{d-2} \lambda_-^2 } \nonumber \\
&=& d \left|\lambda_+-\lambda_-\right|  \|\bar y_+ - x_+\|^{d-1}
+ { (d-1) \|\bar u_+ - \bar u_-\| \max\{\|x_+ - \bar y_+\|, \|x_- - \bar y_-\|\}^{d-2} \lambda_-^2 } \nonumber \\
&\leq& d \left|\lambda_+-\lambda_-\right|  \|\bar y_+ - x_+\|^{d-1}
+   (d-1) \max\{ \|x_+ - \bar y_+\|, \|x_- - \bar y_-\|\}^{d-2}  \nonumber \\
&& \ \ \times \left( 2\lambda_-^2 \|\nabla f_{x_+}(\bar y_+)-\nabla f_{x_-}(\bar y_+)\| + | \lambda_+ - \lambda_- | \|\bar y_+ - x_+\|
+\lambda_- \|x_+ - x_-\| \right) .
\end{eqnarray}
Next, by applying \eqref{beta-minus-bound}, \eqref{Def-lambda-bar},
Lemma \ref{lemma2}, Lemma \ref{lemma:distance-x-y} and Lemma \ref{lemma4}, we have
\begin{eqnarray*}
	\lambda_- \le \bar \lambda &=&
	\max\Big\{ \Theta\left( \bar \epsilon^{-\frac{d-1}{d+1}} \right),
	\Theta\left( \bar \rho^{-\frac{d-1}{d}} \right)
	\Big\}
	\le
	\max\left\{ \Theta(\bar \epsilon^{-1}),\, \Theta\left(\bar \rho^{-\frac{d+1}{d}}\right) \right\}  , \\
	\lambda_+ - \lambda_-
	&\le& \max\left\{ \Theta\left(\bar \epsilon^{-2}\right), \Theta\left(\bar \rho^{-\frac{2(d+1)}{d}}\right) \right\}(\beta_+ - \beta_-) ,
	\nonumber\\
	\|x_+ - \bar y_+\|
	&\le& \max\left\{ \Theta( \bar \epsilon^{-1}),\, \Theta\left(\bar \rho ^{-\frac{d+1}{d}}\right) \right\} ,
	\nonumber\\
	\|x_- - \bar y_-\|
	&\le& \max\left\{ \Theta\left( \bar \epsilon^{-\frac{d-1}{d+1}} \right),\, \Theta\left(\bar \rho^{-\frac{d-1}{d}}\right) \right\}
	\le
	\max\left\{ \Theta(\bar \epsilon^{-1}),\, \Theta\left(\bar \rho^{-\frac{d+1}{d}}\right) \right\} ,
	\nonumber\\
	\|\nabla f_{x_+}(\bar y_+)-\nabla f_{x_-}(\bar y_+)\|
	&\leq& \max\left\{ \Theta\left(\bar \epsilon^{-d+1}\right), \, \Theta\left( \bar \rho^{-\frac{(d-1)(d+1)}{d}} \right) \right\} (\beta_+ - \beta_-).
\end{eqnarray*}
Combining the bounds above with \eqref{diff of psi upper bound} yields
\begin{eqnarray}
&& \left| \psi_+ - \psi_-\right| \nonumber \\
&\le& d \max\left\{ \Theta\left(\bar \epsilon^{-d-1}\right), \, \Theta\left( \bar \rho^{-\frac{(d+1)^2}{d}} \right) \right\} (\beta_+ - \beta_-) +(d-1)\max\left\{ \Theta\left(\bar \epsilon^{-d+2}\right), \, \Theta\left(\bar \rho^{-\frac{(d+1)(d-2)}{d}} \right) \right\}
\nonumber\\
& & \ \ \times \Bigg(
2\max\left\{ \Theta\left(\bar \epsilon^{-d-1}\right), \, \Theta\left( \bar \rho^{-\frac{(d+1)^2}{d}} \right) \right\}
+  \max\left\{ \Theta\left(\bar \epsilon^{-3}\right), \, \Theta\left(\bar \rho^{-\frac{3(d+1)}{d}} \right) \right\}
\nonumber\\
& & \ \ \ \ \ \  	+  \max\left\{ \Theta\left(\bar \epsilon^{-1}\right), \, \Theta\left(\bar \rho^{-\frac{(d+1)}{d}} \right) \right\}
\Bigg) (\beta_+ - \beta_-)
\nonumber\\
&\le& \max\left\{ \Theta\left(\bar \epsilon^{-2d+1}\right), \, \Theta\left(\bar \rho^{-\frac{(2d-1)(d+1)}{d}} \right) \right\}
(\beta_+ - \beta_-),
\nonumber
\end{eqnarray}
where the last 	inequality is due to $d \ge 2$.
%At that point, $j$ times of bisection steps have been performed; therefore,
Because $\beta_+ - \beta_- = \frac{1}{2^j}$, from \eqref{diff of psi lower bound} we have
\begin{equation*}
(1-\hat{\sigma})^{d-1}\alpha_+ - (1+\hat{\sigma})^{d-1}\alpha_- \leq
\max\left\{ \Theta\left(\bar \epsilon^{-2d+1}\right), \, \Theta\left(\bar \rho^{-\frac{(2d-1)(d+1)}{d}} \right) \right\} \frac{1}{2^j}.
\end{equation*}
The left hand side of the above inequality is a positive constant. Therefore,
\begin{equation*}
j \leq  \Theta \left( \max\{   \log_2(\bar{\epsilon}^{-1})  ,  \log_2(\bar{\rho}^{-1})   \} \right)
\end{equation*}
as required.
\hfill $\Box$ \vskip 0.4cm
{
	
	\begin{remark}In fact, we can quantify the constants in the proof of Theorem \ref{Thm:complexity-line-search} more explicitly, and
		obtain the exact form of the bound $ \Theta \left( \max\{   \log_2(\bar{\epsilon}^{-1})  ,  \log_2(\bar{\rho}^{-1})   \} \right)$.
		Recall that
		$$
		\bar \lambda = \max \left\{   \alpha_-^{1/d} \left[ \frac{1}{\bar \rho} (1+\hat \sigma + \frac{L_d+M}{d!} \alpha_-) \right]^{1-1/d}, \left(  \frac{\sigma^2 \alpha_-^{2/(d-1)}  }{2 \bar \epsilon}  \right)^{\frac{d-1}{d+1}   } \right\}\quad \mbox{and} \quad
		D_1 = \left(2+ \frac{2}{\sqrt{1-\sigma^2}} D\right).$$
		Introduce the following constants
		\begin{eqnarray*}
			G_1 &=& \frac{4(\bar \lambda+4 \bar{C})^2}{\hat{C}}, \\
			G_2 &=& D+ \frac{L_dD}{d!} \max (\frac{9}{4} \bar{C}, 8 \bar \lambda), \\
			G_3 &=& (1+\hat \sigma) \left[D_1 +  \frac{L_dD_1^{d+1}}{d!}  \bar \lambda \right], \\
			G_4 &=& \sum_{l=2}^d \left[  (l-1) B_l D_1 ( D_1+G_2)^{l-2} + B_{l+1} D_1 (D_1+ G_2)^{l-1}  \right] + B_2D_1,
		\end{eqnarray*}
		where
		$$
		\hat{C} = \frac{d! \sigma_l}{(L_d+M) D_1^{d-1}} ,\quad  \bar{C} = \max \left\{   \frac{\sigma^2 D}{2(1-\sigma^2) \bar \epsilon} ,  \frac{D^{(3d-1)/(2d)} (1+\sigma)^{1/d} }{
			(1-\sigma) \alpha_-^{  \frac{d-1}{d(d-2)}  }
		}
		\left(\frac{1}{\bar \rho} \right)^{\frac{d+1}{d}}
		\right\}
		$$
		and $B_1, ... , B_d$ is a sequence defined by
		$$
		B_d = \|\nabla^d f(x_*)\|, \quad B_{l-1} = \|\nabla^{l-1} f(x_*)\| + 2D_1 B_l, \ l = 2 ,..., d.
		$$
		Then the complexity bound in Theorem \ref{lemma:upper-bound-lambda} can be explicitly expressed by
		\begin{equation}\label{Exact-Bound}
		\log \left( \frac{ d G_1 \bar{\lambda}^{d-1} + (d-1) \max(G_2,G_3)^{d-2} \bar{\lambda}^2 (2\bar{\lambda}^2 G_4 + G_1G_2 + \bar{\lambda} D_1)
		}{ (1-\hat \sigma)^{d-1} \alpha_+ - (1+\hat \sigma)^{d-1} \alpha_-
		}  \right).
		\end{equation}
		It is clear that the dependence of the resulting bound
		depends logarithmically on the parameters $L_d, D$, and input parameters $\alpha_+, \alpha_-, \sigma_u$, and polynomially on $d$. The derivation of \eqref{Exact-Bound} is skipped for the sake of succinctness. % of the paper as it is technically involved.
	\end{remark}
	
	Now, for a given $\epsilon>0$, we denote
	$$
	\bar D_{\epsilon} := \sup \{\|x-x_*\|: \exists y\in \partial F(x) \ s.t. \ \|y\| < \epsilon \}.
	$$
	Combining the bounds provided in Theorem \ref{Thm:Iteration-Complexity}, Theorem \ref{Thm:complexity-line-search} and \eqref{Exact-Bound},
	we obtain the overall iteration bound for Algorithm \ref{Alg:Opt-High-Order} in terms of the {\bf ATS} calls as follows:
	%Simplify the statement in the theorem, we first introduce the notations
	%$$
	%W_1:= \frac{(1+\hat \sigma) L_d D_1^{d+1}}{d!}, \quad W_2:= \alpha_-^{1/d} \left(
	%1+ \hat \sigma + \frac{ L_d+M}{d!} \alpha_-
	%\right)^{1-1/d},\quad
	%W_3:= \sigma^{\frac{2(d-1)}{d+1}} \alpha_-^{\frac{2}{d+1}}.
	%$$

	\begin{theorem}\label{Final-complexity-bound}
		Given $\epsilon>0$. Assume that Algorithm \ref{Alg:Opt-High-Order} is implemented with $M \le 2L_d$. Set $\bar \epsilon =  \epsilon / 2$, $\bar \rho  \le \min \left\{ \frac{\epsilon}{2 \bar D_{\epsilon}}, \ \epsilon \right\} $, and define
		%	
		%	suppose $\bar \rho$ is picked such that either one of the following two conditions is satisfied	
		%	(a) $$ \bar \rho \le \min\left\{\left(\frac{\epsilon}{4W_1W_2}\right)^{d} , \frac{\epsilon^2}{4W_1W_3} \right\}.$$
		%	
		%	(b) $$\bar \rho  \le \min \left\{ \frac{\epsilon}{2 \bar D_{\epsilon}}, \ \epsilon \right\}.$$
		%	$$\bar \epsilon \le  \frac{\epsilon}{2} \quad  \bar \rho \le {\bar D_\epsilon}^{-1} \frac{\epsilon}{2}.$$
		%	where $\bar D_{\epsilon}:= \sup \{\|x-x_*\|: \exists \ y \in \partial F(x) \ s.t. \ \|y\| \le \epsilon\}$.
		$$
		K_{\epsilon}:=  \left\lceil
		\frac{d+1}{2} \left(\frac{ 2^d}{(1-(\hat \sigma +\sigma_u)^2)^{(d-1)/2} d! \sigma_l}
		\right)^{\frac{2}{3d+1}} \left( \frac{(L_d + M) D^{d+1}}{\epsilon}
		\right)^{\frac{2}{3d+1}}
		T_{\epsilon} \right\rceil
		$$
		where
		$$
		T_{\epsilon} = \log \left( \frac{ d G_1 \bar{\lambda}^{d-1} + (d-1) \max(G_2,G_3)^{d-2} \bar{\lambda}^2 (2\bar{\lambda}^2 G_4 + G_1G_2 + \bar{\lambda} D_1)
		}{ (1-\hat \sigma)^{d-1} \alpha_+ - (1+\hat \sigma)^{d-1} \alpha_-
		}  \right).
		$$
		Then, a point $z\in \R^n$ satisfying
		$$
		F(z) - F_* \le \epsilon
		$$
		can be found by Algorithm \ref{Alg:Opt-High-Order} with no more than $K_\epsilon$ calls of {\bf ATS}.
	\end{theorem}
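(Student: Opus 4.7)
The argument combines the outer-iteration complexity of Algorithm \ref{Alg:Opt-High-Order} from Theorem \ref{Thm:Iteration-Complexity} with the per-iteration bisection complexity from Theorem \ref{Thm:complexity-line-search} made quantitative via \eqref{Exact-Bound}, together with a separate analysis of the early-termination branch in STEP 2. I would split into two cases according to whether Algorithm \ref{Alg:Opt-High-Order} halts because $\|v_k\|\le \bar\rho$ and $\epsilon_k\le \bar\epsilon$, or runs to completion of the regular iteration count.

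In the early-termination case, by construction (see the verification following Proposition \ref{M proposition 7.7}) the generated $v_k$ lies in $\partial_{\epsilon_k} F(y_k)$. Applying the $\epsilon_k$-subgradient inequality at $x_*$ yields
\begin{equation*}
F(y_k) - F_* \le \|v_k\|\,\|y_k - x_*\| + \epsilon_k.
\end{equation*}
Since $\|v_k\|\le \bar\rho \le \epsilon$, the definition of $\bar D_\epsilon$ gives $\|y_k - x_*\| \le \bar D_\epsilon$; combined with the choices $\bar\epsilon = \epsilon/2$ and $\bar\rho \le \epsilon/(2\bar D_\epsilon)$ this produces $F(y_k) - F_* \le \epsilon$. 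The total number of {\bf ATS} calls up to this termination is bounded by $k\cdot T_\epsilon \le K_\epsilon$, since each of the at most $K_\epsilon/T_\epsilon$ outer iterations triggers Algorithm \ref{alg:bisection} whose {\bf ATS}-call count is bounded by $T_\epsilon$ via \eqref{Exact-Bound}.

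In the regular case, I would invert the bound of Theorem \ref{Thm:Iteration-Complexity}: requiring its right-hand side to be at most $\epsilon$ and solving for $k$ gives precisely
\begin{equation*}
k \ge \frac{d+1}{2}\left(\frac{2^d}{(1-(\hat\sigma+\sigma_u)^2)^{(d-1)/2}\, d!\sigma_l}\right)^{2/(3d+1)} \left(\frac{(L_d+M)D^{d+1}}{\epsilon}\right)^{2/(3d+1)},
\end{equation*}
which is exactly the pre-factor appearing in $K_\epsilon$ before $T_\epsilon$. Multiplying by the bisection bound $T_\epsilon$ from \eqref{Exact-Bound} and taking the ceiling gives the stated $K_\epsilon$.

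The one delicate point to check is that the quantity $T_\epsilon$ can be used as a uniform per-iteration bound across all outer iterations; in other words, that the auxiliary constants $\bar\lambda$ and $G_1,\dots,G_4$ in \eqref{Exact-Bound} depend only on the fixed quantities $\bar\epsilon$, $\bar\rho$, $D$, $d$ and the problem data, not on the outer index $k$. This requires tracing Lemma \ref{lemma:upper-bound-lambda}, where the bound $\lambda_+\le \max\{9A_k/4,\,8\bar\lambda\}$ has $k$-dependence, and verifying that the growth of $A_k$ is already absorbed by $\bar C$ (whose definition involves $\bar\epsilon$ and $\bar\rho$) before termination is triggered. Once this uniformity is confirmed, the total {\bf ATS} count in either case is bounded by the product of the outer-iteration count and $T_\epsilon$, yielding $K_\epsilon$ as claimed, and the terminal iterate $z = y_k$ satisfies $F(z) - F_* \le \epsilon$.
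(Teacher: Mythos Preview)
Your proposal is correct and follows essentially the same approach as the paper: the same two-case split (early termination via $\|v_k\|\le\bar\rho$, $\epsilon_k\le\bar\epsilon$ versus the regular regime), the same $\epsilon$-subgradient inequality combined with the definition of $\bar D_\epsilon$ in the first case, and the same inversion of Theorem~\ref{Thm:Iteration-Complexity} in the second. If anything, you are more explicit than the paper about separating the outer-iteration count from the per-iteration bisection count $T_\epsilon$ (the paper simply plugs $k=K_\epsilon$ into Theorem~\ref{Thm:Iteration-Complexity}, which overcounts harmlessly since $T_\epsilon\ge 1$), and your flag about the $k$-uniformity of $T_\epsilon$ is well placed---it is handled in the paper by the uniform bounds on $A_k$ in \eqref{lower-bound-Ak}--\eqref{upper-bound-Ak}.
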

	
	%\begin{remark}
	%Note that we introduce two different conditions (a) and (b) for $\bar\rho$. The condition (a) guarantees that $\bar \rho$ is polynomially dependent on $\epsilon$, and thus $T_{\epsilon}$ is at the logrithm level of $1/\epsilon$. However, we also give the alternative condition (b), which matches the intuition on the relation between small subgradient value and the objective value gap. The condition (b) might allow larger $\bar \rho$ if the local condition around $x_*$ is good enough that $\bar D_{\epsilon}$ is not too large. Besides, the conditions on $\bar \epsilon$ and $\bar \rho$ are only used in the final stages of the algorithm to decide the point is close to optimality. In the implementation, it is reasonable to set a lower precision at the beginning stage of the algorithm.
	%\end{remark}
	%
	%\textbf{Proof of Theorem \ref {Final-complexity-bound}:}
	
	\begin{proof}
		We consider two cases separately. In the first case, Algorithm \ref{Alg:Opt-High-Order} terminates
		because we find a $k\le K_{\epsilon}$ such that $\|v_{k}\| \le \bar \rho$ and $\|\epsilon_{k}\|\le \bar \epsilon$. As $v_{k} = \nabla f(y_{k}) - \nabla f_{x_k}(y_{k}) + u_{k}$ and $u_{k}\in \nabla f_{x_k}(y_{k}) + \partial_{\epsilon_{k}} h(y_{k})$, we have $v_{k}\in \nabla f(y_{k}) +\partial_{\epsilon_{k}} h(y_{k}) $. Let $x_*$ be the projection of $x_0$ onto $X_*$.
		By the convexity of $f$ and $h$,
		\begin{eqnarray*}
			f(x_*) &\ge& f(y_{k}) + \langle \nabla f(y_{k}) , x_* - y_{k} \rangle \\
			h(x_*) &\ge& h(y_{k}) + \langle v_{k} - \nabla f(y_{k}),  x_* - y_{k} \rangle - \epsilon_{k} .
		\end{eqnarray*}
		Summing up the two inequalities above yields
		$$
		F_* \ge F(y_{k}) + \langle v_{k} , x_* - y_{k} \rangle - \epsilon_{k}
		\ge F(y_{k}) - \bar \rho \|y_{k}-x_*\| - \bar \epsilon .
		$$
		By the construction of $\bar{\rho}$, we have that $\|v_{k}\| \le \bar \rho \le \epsilon$. Together with the definition of $\bar D_{\epsilon}$, this implies that $\|y_{k} - x_*\| \le \bar D_{\epsilon}$. Again, by evoking the construction of $\bar{\rho}$ and $\bar{\epsilon}$, it holds
		$$
		F_* \ge F(y_{k}) - \frac{\epsilon}{2} - \frac{\epsilon}{2}  = F(y_{k}) - \epsilon.
		$$

		In the other case, condition \eqref{two-side condition} holds for every $k\le K_{\epsilon}$. Then, according to Theorem \ref{Thm:Iteration-Complexity}, for all $k \le K_{\epsilon}$ we have
		\begin{equation*}
		F(y_{k})-F_{*} \leq  \left( \frac{d+1}{2} \right)^{\frac{3d+1}{2}} \frac{2^d}{ (1-(\hat \sigma + \sigma_u)^2)^{\frac{d-1}{2}}d!\sigma_l } D^{d+1}(L_d+M)  k^{-\frac{3d+1}{2}}.
		\end{equation*}
		By the definition of $K_{\epsilon}$ and letting $k = K_{\epsilon}$ in the above inequality, one has
		$$
		F(y_{K_{\epsilon}}) - F_* \le \epsilon.
		$$
	\end{proof}
	Note that the implementation of our framework is based on the assumption that the ATS can be efficiently computed.
	By the construction, the objective in \eqref{inexact proximal} has a $\frac{1}{\lambda}$-strongly convex smooth part, which can be solved by many start-of-the-art optimization algorithms. However, there is few efficient algorithms customized for problem \eqref{inexact proximal}.
	Without further knowledge of the problem structure, %we cannot guarantee that
	the proposed approach is not necessarily more efficient as compared to, e.g., a direct application of a general-purpose convex optimization algorithm on \eqref{Prob:main}. At the end of the paper, we shall briefly discuss a method for the subroutine in the special case $d=3$ introduced by Nesterov \cite{Nesterov-2018}, which opens the door for this line of research. Of course, how to efficiently solve  %implementation of general case of
	{\bf ATS} in general remains %is not easy for now, and could be
	a further research topic.
}

\section{Concluding Remark}  { To conclude this paper, we shall discuss how to compute {\bf ATS} efficiently with $d=3$.}
Note that in STEP 2.a of Algorithm \ref{alg:bisection}, an
{ Approximate Tensor Subroutine {\bf (ATS)}
}
%approximate tensor proximal {\bf (ATP)} mapping
is required, which can be implemented in polynomial time in the case of convex optimization. In some applications, {\bf ATS} may be implemented efficiently if some additional structures on the tensor (Taylor) expansion and/or the $h$ function exist. In this subsection, we show how { \bf ATS} (i.e., solve problem  \eqref{inexact proximal}) may be computed efficiently in the absence of the non-smooth part, i.e.\ $F(x)=f(x)$, when $d=3$.
{
	Note that since $h=0$, the $\epsilon_{\beta}$ in the bisection subroutine may be simply set to 0.}

In this case, the objective function in \eqref{inexact proximal} becomes: $f_x(y)+\frac{1}{\lambda} \|y-x\|^2  =f(x)+ \Omega(y-x) $ where
\begin{equation*}\label{regulized taylor approximate function}
\Omega(z)=z^{\top}\nabla f(x) +\frac{1}{2} z^{\top}\left(\nabla^2 f(x)+\frac{1}{\lambda}I\right)z +\frac{1}{3!}\nabla^3 f(x)[z]^3 +\frac{M}{4!} \|z\|^4.
\end{equation*}
Therefore, the subproblem  \eqref{inexact proximal} is equivalent to $\min_{z \in \R^n} \Omega(z)$. {Let $M=3\kappa^2 L_3$ with $\kappa>1$. Then, a similar argument as in Lemma~4 of \cite{Nesterov-2018} implies that function $\Omega(z)$ satisfies the strong relative smoothness condition
	\begin{equation}\label{relative smooth}
	\nabla^2 \rho(z) \preceq \nabla^2\Omega(z) \preceq \frac{\kappa+1}{\kappa-1}\nabla^2 \rho(z)
	\end{equation}
	with respect to function
	$$\rho(z) = z^{\top}\left( \frac{\kappa - 1}{2\kappa} \nabla^2 f(x) + \frac{\kappa-1}{2\lambda(\kappa +1)}{I} \right) z + \frac{M-3\kappa L_3}{6} \| z \|^4.$$}
Such condition allows to minimize $\Omega(z)$ efficiently by a gradient method described in \cite{Lu-relatively smooth,Nesterov-2018}, where we need to solve the following problem in every iteration:
$$
\min\limits_{z \in \R^n} \left( a^{\top}z  +\frac{1}{2} z^{\top}Az +\frac{\gamma}{4}\|z\|^4 \right),\; A\succeq0,\;\gamma>0,
$$
which was considered at the end of Section 5 in \cite{Nesterov-2018}. According to a min-max argument in \cite{Nesterov-2018}, the above problem is shown to be equivalent to
$$
\min\limits_{\tau>0} \left(\gamma\tau^2+\frac{1}{2}a^{\top} (\gamma \tau I +A)^{-1}a \right),
$$
which is actually a univariate optimization problem with a strongly convex and analytic objective function, hence is easily solvable in practice.
{Note that a matrix inverse operation is required in the univariate optimization above. Since in all iterations of the gradient method described in  \cite{Lu-relatively smooth,Nesterov-2018}, the matrix $A$ is exactly $\nabla^2 f(x)$ throughout and only the vector $a$ varies, the matrix inverse operation needs to be performed only once. As the gradient method in \cite{Lu-relatively smooth,Nesterov-2018} is linearly convergent, the total computational cost of an {\bf ATS} call is in the order of $O(n^3 + n^2 \log(\bar \rho^{-1}))$.	
	%In implementation, it is reasonable to compute a eigenvalue factorization of matrix $A$. With the factorization, the univariate optimization problem here is readily solved. Since in all iterations of the gradient method described in  \cite{Lu-relatively smooth,Nesterov-2018}, the matrix $A$ is the same matrix $\nabla^2 f(x)$ and only vector $a$ are different, such factorization can be done just once.
}
% Samples of sectioning (and labeling) in MOOR.
% NOTE: (1) all section levels end with a period,
%       (2) capitalization is as shown (sentence style, not title style).
%
%\section{Introduction.}\label{intro} %%1.
%\subsection{Duality and the classical EOQ problem.}\label{class-EOQ} %% 1.1.
%\subsection{Outline.}\label{outline1} %% 1.2.
%\subsubsection{Cyclic schedules for the general deterministic SMDP.}
%  \label{cyclic-schedules} %% 1.2.1
%\section{Problem description.}\label{problemdescription} %% 2.

% Text of your paper here

% Appendix here
% Options are (1) APPENDIX (with or without general title) or
%             (2) APPENDICES (if it has more than one unrelated sections)
% Outcomment the appropriate case if necessary
%
% \begin{APPENDIX}{<Title of the Appendix>}
% \end{APPENDIX}
%
%   or
%

\section*{Acknowledgments.} We would like to thank Tianyi Lin at UC Berkeley for the helpful discussions at the early stages of the project.

\bibliographystyle{plain}

\begin{thebibliography}{10}
	
\bibitem{Alves-2014-primaldual}
M.M.~Alves, R.D.C.~Monteiro and B.F.~Svaiter (2014)
Primal-dual regularized SQP and SQCQP type methods for convex programming and their complexity analysis.

\bibitem{Alves-2016-Iteration}
M.M.~Alves, R.D.C.~Monteiro and B.F.~Svaiter (2016)
Iteration-complexity of a {R}ockafellar's proximal method of multipliers for convex programming based on second-order approximations.
\newblock {\em 	arXiv:1602.06794}	

\bibitem{Baes-2009}
M.~Baes (2009)
Estimate sequence methods: extensions and approximations.
\newblock {\em Institute for Operations Research},
ETH, Zurich, Switzerland.

\bibitem{Beck-2009-Fast}
A.~Beck and M.~Teboulle (2009)
A fast iterative shrinkage-thresholding algorithm for linear inverse
problems.
\newblock {\em SIAM Journal on Imaging Sciences}, 2(1):183--202.

\bibitem{Birgin-Gardenghi-Martinez-Santos-Toint-2017}
E.G.~Birgin, J.L.~Gardenghi, J.M.~Martinez, S.A.~Santos, and Ph.L.~Toint (2017) Worst-case evaluation
complexity for unconstrained nonlinear optimization using high-order regularized models.
\newblock {\em Mathematical Programming}, 163(1-2):359--368.

\bibitem{Bullins-2018}
B.Bullins (2018)
Fast minimization of structured convex quartics.
\newblock {\em ArXiv Preprint: 1812.10349}.

\bibitem{Bubeck-2015-Geometric}
S.~Bubeck, Y.T.~Lee, and M.~Singh (2015)
A geometric alternative to {N}esterov's accelerated gradient descent.
\newblock {\em ArXiv Preprint: 1506.08187}.

\bibitem{Bubeck-2018-Near-optimal}
S.~Bubeck, Q.~Jiang, Y.T.~Lee, Y.~Li and A.~Sidford (2018)
Near-optimal method for highly smooth convex optimization.
\newblock {\em ArXiv Preprint: 1812.08026}.

\bibitem{BurachikIusemSvaiter1997}
R.S.~ Burachik, A.N.~Iusem and B.F.~Svaiter (1997)
Enlargement of monotone operators with applications to variational inequalities, Set-Valued Anal., 5, 159-180.

\bibitem{Cartis2017}
C.~Cartis, N.I.M.~Gould, and Ph.L.~Toint (2017)
Improved second-order evaluation complexity for unconstrained
nonlinear optimization using high-order regularized models.
\newblock {\em arXiv:1708.04044}.

\bibitem{Cartis-Gould-Toint-2018}
C.~Cartis, N.I.M.~Gould, and Ph.L.~Toint (2018)
Universal regularization methods: varying the power, the
smoothness and the accuracy.
\newblock {\em arxiv:1811.07057v1}.

\bibitem{Cotter-2011-Better}
A.~Cotter, O.~Shamir, N.~Srebro, and K.~Sridharan (2011)
Better mini-batch algorithms via accelerated gradient methods.
\newblock In {\em NIPS}, pages 1647--1655.

\bibitem{Calatroni-Chambolle-2017}
L.~Calatroni and A.~Chambolle (2017)
Backtracking strategies for accelerated descent methods with smooth
composite objectives.
\newblock {\em arXiv:1709.09004}.

\bibitem{Drori-2014-Performance}
Y.~Drori and M.~Teboulle (2014)
Performance of first-order methods for smooth convex minimization: a
novel approach.
\newblock {\em Mathematical Programming}, 145(1-2):451--482.

\bibitem{Gasnikov-2018-Optimal}
A.~Gasnikov, P.~Dvurechensky, E.~Gorbunov, E.~Vorontsova, D.~Selikhanovych and C.~A. Uribe (2018)
The global rate of convergence for optimal tensor methods in smooth convex optimization.
\newblock {\em arXiv:1809.00382}.

\bibitem{Grapiglia-Nesterov-2018}
G.N.~Grapiglia and Yu.~Nesterov (2018)
Accelerated regularized newton methods for minimizing composite
convex functions.
\newblock {\em Technical Report CORE Discussion paper}.

\bibitem{Gasnikov-etal-2019}
A.~Gasnikov, P.~Dvurechensky, E.~Gorbunov, E.~Vorontsova, D.~Selikhanovych, C.A.~Uribe, B.~Jiang, H.~Wang, S.~Zhang, S.~Bubeck, Q.~Jiang, Y.T.~Lee, Y.~Li and A.~Sidford (2019)
Near optimal methods for minimizing convex functions with Lipschitzp-th derivatives.
\newblock {\em In: Conference on Learning Theory (COLT)}, pp.\ 1392-1393.

\bibitem{Jiang-2017-Unified}
B.~Jiang, T.~Lin, and S.~Zhang (2018)
A Unified Adaptive Tensor Approximation Scheme
to Accelerate Composite Convex Optimization.
\newblock {\em arxiv:1811.02427}.

\bibitem{Lan-2012-Optimal}
G.~Lan (2012)
An optimal method for stochastic composite optimization.
\newblock {\em Mathematical Programming}, 133(1):365--397.

\bibitem{Lin-2014-Adaptive}
Q.~Lin and L.~Xiao (2014)
An adaptive accelerated proximal gradient method and its homotopy
continuation for sparse optimization.
\newblock {\em Computational Optimization and Applications}, 60(3):633--674.

\bibitem{Lu-relatively smooth}
H.~Lu and R.M.~Freund and Yu.~Nesterov (2018)
Relatively smooth convex optimization by first-order methods and applications.
\newblock {\em SIAM Journal on Optimization}, 28 (1), pp.\ 333-354.

\bibitem{MARTINEZ2017}
J.M.~Mart\'{i}nez (2017)
On high-order model regularization for constrained optimization.
\newblock {\em SIAM Journal on Optimization}, 27(4):2447--2458.

\bibitem{Monteiro-Svaiter-2012}
R.D.C.~Monteiro and B.F.~Svaiter (2012)
Iteration-Complexity of a {N}ewton Proximal Extragradient Method for Monotone Variational Inequalities and Inclusion Problems.
\newblock{\em SIAM Journal on Optimization}, 22, 914-935.

\bibitem{Monteiro-Svaiter-2013}
R.D.C.\ Monteiro and B.F.\ Svaiter (2013)
An accelerated hybrid proximal extragradient method for convex optimization and its implications to second-order methods.
\newblock {\em SIAM Journal on Optimization}, 23 (2), pp.\ 1092-1125.

\bibitem{Nesterov-1983-Accelerated}
Yu.~Nesterov (1983)
A method for unconstrained convex minimization problem with the rate
of convergence $o(1/k^2)$.
\newblock {\em Doklady AN SSSR, translated as Soviet Math.\ Docl.}, 269:543--547.

\bibitem{Nesterov-2008-Accelerating}
Yu.~Nesterov (2008)
Accelerating the cubic regularization of {N}ewton's method on convex
problems.
\newblock {\em Mathematical Programming}, 112(1):159--181.

\bibitem{Nesterov-2018}
Yu.~Nesterov (2018)
Implementable tensor methods in unconstrained convex optimization,
\newblock {\em CORE Discussion Paper 2018/05, Catholic University of Louvain, Center for Operations Research and Econometrics (CORE)}.

\bibitem{Rockafellar-1970}
R.T.~Rockafellar (1970)
On the maximal monotonicity of subdifferential mappings.
\newblock {\em Pacific J.\ Math.}, 33:209-216.

\bibitem{Scheinberg-Goldfarb-Bai-2014}
K.~Scheinberg, D.~Goldfarb and X.~Bai (2014)
Fast first-order methods for composite convex optimization with backtracking.
\newblock {\em Foundations of Computational Mathematics}, 14:389--417.

\bibitem{Shalev-2014-Accelerated}
S.~Shalev-Shwartz and T.~Zhang (2014)
Accelerated proximal stochastic dual coordinate ascent for
regularized loss minimization.
\newblock In {\em ICML}, pages 64--72.

\bibitem{Shamir-2017-Oracle}
Y.~Arjevani, O.~Shamir and R.~Shiff (2018)
Oracle complexity of second-order methods for smooth convex
optimization.
\newblock {\em Mathematical Programming}, published online.

\bibitem{Su-2016-Differential}
W.~Su, S.~Boyd and E.J.~Candes (2016)
A differential equation for modeling {N}esterov's accelerated
gradient method: theory and insights.
\newblock {\em Journal of Machine Learning Research}, 17(153):1--43.

\bibitem{Wibisono-2016-Variational}
A.~Wibisono, A.C.~Wilson and M.I.~Jordan (2016)
A variational perspective on accelerated methods in optimization.
\newblock {\em Proceedings of the National Academy of Sciences}, pages 7351--7358.

\bibitem{Wilson-2016-Lyapunov}
A.C.~Wilson, B.~Recht and M.I.~Jordan (2016)
A {L}yapunov analysis of momentum methods in optimization.
\newblock {\em ArXiv Preprint: 1611.02635}.

\end{thebibliography}

\appendix

\section{Proofs of the lemmas in Section \ref{Line search}}
We first establish an uniform lower bound as well as an upper bound for the sequence $\{A_k\}$.
\begin{lemma}
	Let $D$ be the distance of $x_0$ to $X_{*}$. Suppose $\{A_k\}_{k=1}^{\ell}$ is generated from Algorithm \ref{Alg:Opt-High-Order}, and the algorithm has not stopped at iteration $\ell$. Then for any integer $ 1 \le k \le \ell$, it holds that
	\begin{equation}\label{lower-bound-Ak}
	A_k \ge \frac{d! \sigma_l}{(L_d + M )\left( \frac{2}{\sqrt{1-\sigma^2}}+2 \right)^{d-1}D^{d-1}},
	\end{equation}
	and
	\begin{equation}\label{upper-bound-Ak}
	A_k \leq \max\left\{ \frac{\sigma^2 D}{2\bar{\epsilon}(1-\sigma^2)} , \frac{D^{\frac{3d-1}{2d}}(1+\sigma)^{\frac{1}{d}}}{(1-\sigma)(\alpha_-)^{\frac{(d-1)}{d(d-2)}}} \left(\frac{1}{\bar{\rho}}\right)^{\frac{d+1}{d}}\right\}.
	\end{equation}
\end{lemma}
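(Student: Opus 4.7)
The proof naturally splits into a lower bound and an upper bound on $A_k$. For both I will use two structural facts: first, $a_{k+1}>0$ for every $k$ implies $\{A_k\}$ is monotonically non-decreasing; and second, since Algorithm \ref{Alg:Opt-High-Order} has not stopped at iteration $\ell$, STEP 2 must have failed at each index up to $\ell$, which (through the alternative branch of STEP 3 that would otherwise have satisfied the stopping test at the next check) forces the two-sided condition \eqref{two-side condition} to hold for every $k=1,\dots,\ell$.

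For the \emph{lower bound}, monotonicity reduces the claim to lower-bounding $A_1$, and since $A_0=0$, \eqref{rep:a-k} gives $A_1=a_1=\lambda_1$. The left inequality in \eqref{two-side condition} at $k=0$ then yields $\lambda_1\ge \frac{d!\sigma_l}{(L_d+M)\,\|y_1-\tilde{x}_0\|^{d-1}}$, so it remains to bound $\|y_1-\tilde x_0\|$ from above. Observing that $\tilde x_0=x_0=y_0$ and applying the triangle inequality together with \eqref{bound of yk ineq} and $\|x_0-x_*\|\le D$ gives $\|y_1-\tilde x_0\|\le \left(\frac{2}{\sqrt{1-\sigma^2}}+2\right)D$, which produces the claimed lower bound after substitution.

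For the \emph{upper bound}, by monotonicity it suffices to bound $A_\ell$. Abbreviate $h_j:=\|y_j-\tilde x_{j-1}\|$; \eqref{Bound: sum-Ak} then gives $\frac{A_\ell}{\lambda_\ell}h_\ell^2\le \frac{D^2}{1-\sigma^2}$. Since the algorithm has not stopped, either $\epsilon_\ell>\bar\epsilon$ or $\|v_\ell\|>\bar\rho$. Applying Proposition \ref{M proposition 7.7} under the two-sided condition at iteration $\ell$, inequality \eqref{ineq:appr-sol2} combined with the triangle inequality yields the two companion bounds $2\lambda_\ell\epsilon_\ell\le \sigma^2 h_\ell^2$ and $\lambda_\ell\|v_\ell\|\le (1+\sigma)h_\ell$. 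In the first scenario, $\lambda_\ell/h_\ell^2\le \sigma^2/(2\bar\epsilon)$ and plugging this directly into the summability estimate returns the $\bar\epsilon$-branch of the claimed maximum. The second scenario is the main obstacle: one must combine the three inequalities $\lambda_\ell\le (1+\sigma)h_\ell/\bar\rho$, $\alpha_-\le \lambda_\ell h_\ell^{d-1}$, and $h_\ell\le D\sqrt{\lambda_\ell/((1-\sigma^2)A_\ell)}$ in a specific order to eliminate both $\lambda_\ell$ and $h_\ell$ in favor of $A_\ell$, and to extract the precise exponents on $D$, $\alpha_-$, and $\bar\rho$ declared in the statement. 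The remaining algebra is then routine.
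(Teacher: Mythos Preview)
Your proposal is correct and follows essentially the same route as the paper. For the lower bound your argument is identical: reduce to $A_1=\lambda_1$ via monotonicity, invoke the left inequality in \eqref{two-side condition}, and bound $\|y_1-\tilde x_0\|$ by the triangle inequality through $x_*$ using \eqref{bound of yk ineq}. For the upper bound the paper uses the same three ingredients you list---the single-term consequence of \eqref{Bound: sum-Ak}, the two companion bounds $2\lambda_k\epsilon_k\le\sigma^2 h_k^2$ and $\lambda_k\|v_k\|\le(1+\sigma)h_k$ from \eqref{ineq:appr-sol2}, and the left side of \eqref{two-side condition}---the only cosmetic difference being that the paper packages the combination of your relations (2) and (3) as the single-term estimate $A_k/\lambda_k^{(d+1)/(d-1)}\le C$ drawn from \eqref{sum bound C}, and then eliminates $\lambda_k$ against $A_k\lambda_k\|v_k\|^2\le(1+\sigma)^2 D^2/(1-\sigma^2)$, which is exactly the elimination you describe.
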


\begin{proof}
	We first establish the lower bound. Since $\{ A_k \}$ is monotonically increasing, it suffices to lower bound $A_1$. Recall that $A_0=0$ and $A_1 = A_0 + a_1 = \lambda_1$, and the choice of large-step \eqref{two-side condition} in Algorithm~\ref{Alg:Opt-High-Order} leads to
	$$
	\frac{d! \sigma_l}{L_d + M } \leq \lambda_{1}\|y_{1}-\tilde{x}_{0} \|^{d-1} .
	$$
	Moreover, Lemma \ref{convergence speed_general} implies that
	\begin{equation}\label{bound-x-k-star}
	\|x_k - x_*\| \leq D,\quad \mbox{and} \quad \|y_k - x_*\| \leq \Big( \frac{2}{\sqrt{1-\sigma^2}}+1\Big)D,
	\end{equation}
	where $x_*$ is the projection of $x_0$ onto the optimal solution set $X_*$. Combining the above two inequalities with the fact that $\tilde x_0 = x_0$, it follows that
	$$
	\| y_1 - \tilde x_0 \| \le \| y_1 - x_* \| + \| x_* - \tilde x_0\|  \le \Big( \frac{2}{\sqrt{1-\sigma^2}}+2\Big)D.
	$$
	Therefore,
	$$A_1 = \lambda_1 \ge \frac{d! \sigma_l}{(L_d + M )\left( \frac{2}{\sqrt{1-\sigma^2}}+2 \right)^{d-1}D^{d-1}},$$
	which is a uniform lower bound of the sequence $\{ A_k \}$.
	
	Next, we provide the upper bound.
	By invoking \eqref{ineq:appr-sol2} to $(y_k, v_k, \epsilon_k, \lambda_k, \tilde {x}_{k-1})$, it holds that
	\begin{eqnarray}\label{v part}
	\lambda_k \|v_k\| &\leq& (1+\sigma)\|y_k-\tilde{x}_{k-1}\|,\\
	2 \lambda_k \epsilon_k &\leq& \sigma^2 \|y_k-\tilde{x}_{k-1}\|^2.\label{epsilon part}
	\end{eqnarray}
	Then, combining \eqref{v part} with \eqref{Bound: sum-Ak} leads to
	\begin{equation}\label{A_k-lambdak-bound}
	A_k \lambda_k \|v_k\|^2 \leq (1+\sigma)^2 \frac{A_k}{\lambda_k}\|y_k-\tilde{x}_{k-1}\|^2 \leq (1+\sigma)^2\frac{D^2}{1-\sigma^2}.
	\end{equation}
	Moreover, it follows from (\ref{sum bound C}) that
	\begin{equation*}
	\frac{A_k}{\lambda_k^{\frac{d+1}{d-1}}} \leq \sum_{j=1}^{k} \frac{A_j}{\lambda_j^{\frac{d+1}{d-1}}} \leq \frac{D^2}{(1-\sigma^2)(\alpha_-)^\frac{2}{d-2}}
	\end{equation*}
	where $\alpha_-=\frac{d!\sigma_l}{L_d+M}$.
	Combining the above two inequalities yields
	\begin{equation*}
	\left(\frac{A_k(1-\sigma^2)(\alpha_-)^{\frac{2}{d-2}}}{D^2}\right)^{\frac{d-1}{d+1}} \|v_k\|^2 A_k \leq \lambda_k \|v_k\|^2 A_k \leq \frac{1+\sigma}{1-\sigma}D,
	\end{equation*}
	or equivalently,
	\begin{equation}\label{bdd1}
	A_k \leq  \frac{D^{\frac{3d-1}{2d}}(1+\sigma)^{\frac{1}{d}}}{(1-\sigma)(\alpha_-)^{\frac{(d-1)}{d(d-2)}}}  \left(\frac{1}{\|v_k\|}\right)^{\frac{d+1}{d}}.
	\end{equation}
	On the other hand, (\ref{epsilon part}) together with \eqref{Bound: sum-Ak} implies that
	\begin{equation*}
	2A_k \epsilon_k \leq \frac{A_k}{\lambda_k} \sigma^2 \|y_k-\tilde{x}_{k-1}\|^2 \leq \frac{\sigma^2 D}{1-\sigma^2}.
	\end{equation*}
	Consequently,
	\begin{equation}\label{bdd2}
	A_k \leq \frac{\sigma^2 D}{2(1-\sigma^2)}\frac{1}{\epsilon_k}.
	\end{equation}
	%	From (\ref{bdd1}) and (\ref{bdd2}), we conclude that
	Now, since the algorithm has not been terminated, we have either $\|v_k\|\geq \bar{\rho}$ or $\epsilon_k \geq \bar{\epsilon}$, which combined with \eqref{bdd1} and \eqref{bdd2} yields that
	\begin{equation*}
	A_k \leq \max\left\{ \frac{\sigma^2 D}{2\bar{\epsilon}(1-\sigma^2)} , \frac{D^{\frac{3d-1}{2d}}(1+\sigma)^{\frac{1}{d}}}{(1-\sigma)}\left( \frac{1}{\alpha_1} \right)^{\frac{(d-1)}{d(d-2)}} \left(\frac{1}{\bar{\rho}}\right)^{\frac{d+1}{d}}\right\},
	\end{equation*}
	and the conclusion follows.
\end{proof}

Below we prove Lemma \ref{lemma:upper-bound-lambda} and Lemma \ref{lemma1} respectively.

\noindent	{\bf Proof of Lemma \ref{lemma:upper-bound-lambda}.} We first demonstrate that
\begin{equation}\label{beta-minus-bound}
\beta_- \leq \bar{\beta} = \beta(\bar \lambda).
\end{equation}
Otherwise, we have $\beta_- \ge \bar{\beta}$ and $\lambda_- > \bar{\lambda}$ as the function $\lambda(\beta)$ is strictly increasing in $\beta$. This together with Lemma \ref{M lemma 7.8} and the fact that $\lambda_- \|y_{-}-x_{-}\| \leq \alpha_-$, implies that
the vector $v_{\beta_-} = \nabla f(y_{-}) - \nabla f_{x_{-}}(y_{-})+ u_{\beta_-}$ satisfies
\begin{equation*}
v_{\beta_-} \in (\nabla f + (\partial h)^{\epsilon})(y_{-}),\ \ \ \|v_{\beta_-} \| \leq \bar{\rho},\ \ \
\epsilon \leq \bar{\epsilon},
\end{equation*}	
and thus the algorithm would have been terminated, yielding a contradiction.
So we have $\beta_- \leq \bar{\beta}$. Since
$j \ge \log_2 (2/(1-\bar{\beta}))$, we have
\begin{equation}\label{beta-plus-bound}
\beta_+ = \beta_- +\frac{1}{2^j} \leq \bar{\beta}+ \frac{1-\bar{\beta}}{2} = \frac{1+\bar{\beta}}{2}<1 .
\end{equation}
Together with the monotonicity of the function $\lambda(\beta)$ this implies that
$$
\lambda_+ \le \lambda\left(\frac{1+\bar{\beta}}{2}\right) = A_k \frac{\left((1 + \bar \beta)/{2} \right)^2}{1 - (1+\bar \beta)/2} = A_k \frac{\left( 1 + \bar \beta \right)^2}{2(1 - \bar \beta)} = \bar \lambda \frac{\left( 1 + \bar \beta \right)^2}{\bar \beta^2}.
$$
Therefore, $\lambda_+ \le A_k \frac{\left( 1 + \bar \beta \right)^2}{2(1 - \bar \beta)} \le  \left(\frac{3}{2}\right)^2A_k$ when $\bar\beta \le \frac{1}{2} $ and $\lambda_+ \le \bar \lambda \frac{\left( 1 + \bar \beta \right)^2}{\bar \beta^2} \le  \left(\frac{2}{1/2}\right)^2 \bar \lambda$ when $\bar \beta \ge \frac{1}{2} $. Combining the bounds in both cases, we have
\begin{eqnarray*}
	\lambda_+ &\le& \max\left\{ 9A_k/{4},\, 8 \bar \lambda \right\} \le \max\left\{ \Theta(\bar \epsilon^{-1}),\, \Theta\left((\bar \rho^{-1})^{\frac{d+1}{d}}\right),\, \Theta \left( ({\bar \epsilon}^{-1})^{\frac{d-1}{d+1}} \right) ,\, \Theta \left( ({\bar \rho}^{-1} )^{\frac{d-1}{d}}\right)  \right\} \nonumber \\
	&=&  \max\left\{ \Theta(\bar \epsilon^{-1}),\, \Theta\left((\bar \rho^{-1})^{\frac{d+1}{d}}\right) \right\},
\end{eqnarray*}
where the second inequality is due to the upper bounds of $\bar \lambda$ and $A_k$ in \eqref{Def-lambda-bar} and \eqref{upper-bound-Ak} respectively.
\hfill $\Box$ \vskip 0.4cm

\noindent {\bf Proof of Lemma \ref{lemma1}. }
Let $\bar v:= \bar u_+ - \nabla f_{x_+}(\bar y_+) +\nabla f_{x_-}(\bar y_+)$. Then $\bar v \in (\nabla f_{x_-} + \partial h)(\bar y_+)$. By (\ref{relation+}) and (\ref{relation-}), it holds that
\begin{eqnarray*}
	&&	\bar y_+ - \bar y_- +\lambda_+ \bar u_+ -\lambda_- \bar u_- = x_+ - x_-\\
	&\Longleftrightarrow& 	\bar	y_+ - \bar y_- + \lambda_- (\bar u_+ - \bar u_-) = (\lambda_- - \lambda_+)\bar u_+ + x_+ - x_-	\\
	&\Longleftrightarrow& \bar	y_+ - \bar y_- + \lambda_- (\bar v - \bar u_-) = \lambda_- (\bar v- \bar u_+) + (\lambda_- - \lambda_+)\bar u_+ + x_+ - x_-.
\end{eqnarray*}
Recall that $\bar v \in (\nabla f_{x_-}+\partial h)(\bar y_+)$ and $\bar u_- \in (\nabla f_{x_-}+\partial h)(\bar y_-)$,
and from the convexity of $f_{x_-} +h$ it holds that
\begin{equation*}
\langle \bar y_+ - \bar y_-, \bar v - \bar u_-\rangle \ge 0.
\end{equation*}
Therefore,
\begin{equation*}
\begin{split}
\lambda_- \|\bar v - \bar u_-\|
& \leq \|\bar y_+ - \bar y_- +\lambda_-(\bar v- \bar u_-)\| \\
& =\Big\|\lambda_- (\bar v-\bar u_+) + (\lambda_- - \lambda_+)\bar u_+ + x_+ - x_-\Big\| \\
& \leq \lambda_- \|\bar v- \bar u_+\| +\big| \lambda_- - \lambda_+ \big| \|\bar u_+\| +\|x_+ - x_-\|. \\
\end{split}
\end{equation*}
Using the previous identity and the triangle inequality of the norms implies that
\begin{equation*}
\begin{split}
\lambda_-^2 \|\bar u_+- \bar u_-\| & \le \lambda_-^2 (\|\bar u_+ - \bar v\| + \|\bar v - \bar u_-\|) \\
& \leq 2\lambda_-^2 \|\bar v- \bar u_+\| +\big| \lambda_- - \lambda_+ \big| \lambda_- \|\bar u_+\| +\lambda_-\|x_+ - x_-\| \\
& \le 2\lambda_-^2 \|\nabla f_{x_+}(\bar y_+)-f_{x_-}(\bar y_+)\| +\big| \lambda_- - \lambda_+ \big| \|\bar y_+ - x_+\| +\lambda_-\|x_+ - x_-\|.
\end{split}
\end{equation*}
\hfill $\Box$ \bigskip

Next we shall present the lemmas with proofs that were used in Section \ref{Line search}.
\begin{lemma}\label{lemma2}
	Suppose $\lambda_+$, $\lambda_-$, $\beta_+$ and $\beta_-$ are generated from Algorithm \ref{alg:bisection}.
	When the nubmer of iteration $j$ in Algorithm \ref{alg:bisection} satisfying $j \ge \log_2 (2/(1-\bar{\beta}))$ with $\bar{\beta} = \beta (\bar \lambda)$ and $\bar \lambda$ defined in \eqref{Def-lambda-bar}, we have
	\begin{equation}\label{}
	\lambda_+ - \lambda_- \leq {\left( \max \left\{ \Theta \left( {\bar \epsilon}^{-1} \right)  ,\, \Theta \left( {\bar \rho} ^{-\frac{d+1}{d}}\right) \right\} \right)^2}(\beta_+ - \beta_-).
	\end{equation}
	
\end{lemma}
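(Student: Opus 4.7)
\noindent\emph{Proof proposal for Lemma \ref{lemma2}.}
The plan is to bound the increment $\lambda_+ - \lambda_-$ by using the explicit parameterization $\lambda(\beta) = A_k\beta^2/(1-\beta)$ from \eqref{Expression-lambda} together with the localization of $\beta_+$ already established in the proof of Lemma \ref{lemma:upper-bound-lambda}. Since $\lambda(\cdot)$ is smooth and strictly increasing on $(0,1)$, I would first compute
\[
\lambda'(\beta) = A_k\cdot\frac{2\beta - \beta^2}{(1-\beta)^2} \le \frac{2A_k}{(1-\beta)^2},
\]
and observe that $\lambda'$ itself is increasing in $\beta$. By the mean value theorem on $[\beta_-,\beta_+]$, this yields
\[
\lambda_+ - \lambda_- \;\le\; \lambda'(\beta_+)\,(\beta_+-\beta_-) \;\le\; \frac{2A_k}{(1-\beta_+)^2}\,(\beta_+-\beta_-).
\]

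Next, I would invoke the already-proved inequality \eqref{beta-plus-bound}, namely $\beta_+ \le (1+\bar\beta)/2$, which is valid under the hypothesis $j \ge \log_2(2/(1-\bar\beta))$. This gives $1 - \beta_+ \ge (1-\bar\beta)/2$ and hence
\[
\lambda_+ - \lambda_- \;\le\; \frac{8A_k}{(1-\bar\beta)^2}\,(\beta_+ - \beta_-).
\]
So the task reduces to bounding $A_k/(1-\bar\beta)^2$ in the desired form. I would split into two cases exactly as in the proof of Lemma \ref{lemma:upper-bound-lambda}. If $\bar\beta\le 1/2$, then $1/(1-\bar\beta)\le 2$ and so $A_k/(1-\bar\beta)^2 \le 4A_k$. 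If $\bar\beta > 1/2$, then the identity $\bar\lambda = A_k\bar\beta^2/(1-\bar\beta)$ from \eqref{Expression-lambda} rearranges to $1/(1-\bar\beta) = \bar\lambda/(A_k\bar\beta^2) \le 4\bar\lambda/A_k$, giving $A_k/(1-\bar\beta)^2 \le 16\bar\lambda^2/A_k$. Combining,
\[
\frac{A_k}{(1-\bar\beta)^2} \;\le\; \max\!\left\{4A_k,\; \frac{16\bar\lambda^2}{A_k}\right\}.
\]

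Finally, I would convert this into the stated order estimate. The upper bound \eqref{upper-bound-Ak} gives $A_k \le \max\{\Theta(\bar\epsilon^{-1}),\Theta(\bar\rho^{-(d+1)/d})\}$, while the uniform lower bound \eqref{lower-bound-Ak} shows $1/A_k = \Theta(1)$, and the definition \eqref{Def-lambda-bar} yields $\bar\lambda \le \max\{\Theta(\bar\epsilon^{-(d-1)/(d+1)}),\Theta(\bar\rho^{-(d-1)/d})\}$, so in particular $\bar\lambda^2/A_k = O(\bar\lambda^2)$ is dominated by $\max\{\Theta(\bar\epsilon^{-1}),\Theta(\bar\rho^{-(d+1)/d})\}^2$ for $d\ge 2$. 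Likewise $A_k \le A_k^2/\Theta(1)$ is dominated by the same squared quantity. Substituting back produces
\[
\lambda_+ - \lambda_- \;\le\; \Bigl(\max\{\Theta(\bar\epsilon^{-1}),\Theta(\bar\rho^{-(d+1)/d})\}\Bigr)^2 (\beta_+ - \beta_-),
\]
as required. The main obstacle, and essentially the only nontrivial point, is the case analysis on $\bar\beta$: one must match each branch with the correct side of the bound on $\lambda_+$ in Lemma \ref{lemma:upper-bound-lambda} and verify that the exponents of $\bar\epsilon^{-1}$ and $\bar\rho^{-1}$ arising from $\bar\lambda^2/A_k$ do not exceed those of the target squared bound — a check that succeeds because $(d-1)/(d+1) < 1$ and $(d-1)/d < (d+1)/d$.
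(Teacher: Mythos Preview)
Your proposal is correct and follows essentially the same route as the paper: apply the mean value theorem to $\lambda(\beta)=A_k\beta^2/(1-\beta)$, use the localization $\beta_+\le(1+\bar\beta)/2$ from \eqref{beta-plus-bound}, and then bound $A_k/(1-\bar\beta)^2$ via the upper and lower bounds on $A_k$ and the explicit form of $\bar\lambda$. The only cosmetic difference is that the paper bounds $A_k/(1-\bar\beta)^2$ directly by rewriting it as $\bar\lambda^2/(A_k\bar\beta^4)$ and estimating $\bar\beta$ from its closed form, arriving at $(\bar\lambda+4A_k)^2/A_k$, whereas you reach the equivalent $\max\{4A_k,\,16\bar\lambda^2/A_k\}$ through a case split on $\bar\beta\lessgtr 1/2$; both expressions lead to the same final order estimate.
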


\begin{proof}
	Since $j \ge \log_2 (2/(1-\bar{\beta}))$, inequality \eqref{beta-plus-bound} holds.
	By the mean-value theorem and the definition of $\lambda(\beta)$,  %\eqref{Expression-lambda-prime},
	there exists
	$\eta \in (\beta_-,\beta_+)$ such that
	\begin{equation*}
	\lambda_+ - \lambda_-  = A_k \left(\frac{1}{(1-\eta)^2}-1\right) (\beta_+ - \beta_-) \leq
	A_k \left( \frac{4}{(1-\bar{\beta})^2} -1\right) (\beta_+ - \beta_-),
	\end{equation*}
	where the inequality is due to \eqref{beta-plus-bound}.
	Recall that %We also recall Lemma \ref{Lemma:func-beta} states that
	$$
	\bar \beta = \frac{ \sqrt{\bar \lambda^2 + 4 \bar \lambda A_k}- \bar\lambda}{2 A_k} = \frac{2 \bar\lambda}{\sqrt{\bar \lambda^2 + 4 \bar \lambda A_k} + \bar\lambda}.
	$$	
	The relation of $\beta$ and $\lambda$ in \eqref{Expression-lambda} gives
	$$
	\frac{A_k}{(1 - \bar \beta)^2} = \frac{\bar \lambda^2}{A_k \bar \beta^4} = \frac{(\sqrt{\bar \lambda^2 + 4 \bar \lambda A_k} + \bar\lambda)^4}{16 \, A_k \bar \lambda^2} \le \frac{(\bar \lambda + 4  A_k)^2}{ A_k}.
	$$
	Therefore, by invoking \eqref{Def-lambda-bar}, \eqref{lower-bound-Ak} and \eqref{upper-bound-Ak}, we have
	\begin{eqnarray*}
		&& \lambda_+ - \lambda_-  \\
		&\le&  A_k \left( \frac{4}{(1-\bar{\beta})^2} -1\right) (\beta_+ - \beta_-)\\
		&\le&   \frac{4\,A_k}{(1-\bar{\beta})^2}  (\beta_+ - \beta_-)\\
		&\le & \frac{4(\bar \lambda + 4  A_k)^2}{ A_k}(\beta_+ - \beta_-)\\
		&\le & {\left( \max \left\{ \Theta \left( ({\bar \epsilon}^{-1})^{\frac{d-1}{d+1}} \right) ,\, \Theta \left( ({\bar \rho}^{-1} )^{\frac{d-1}{d}}\right) \right\} +  \max \left\{ \Theta \left( {\bar \epsilon}^{-1} \right)  ,\, \Theta \left( ({\bar \rho}^{-1} )^{\frac{d+1}{d}}\right) \right\} \right)^2}(\beta_+ - \beta_-)\\
		&\le & {\left( \max \left\{ \Theta \left( {\bar \epsilon}^{-1} \right)  ,\, \Theta \left( ({\bar \rho}^{-1} )^{\frac{d+1}{d}}\right) \right\} \right)^2}(\beta_+ - \beta_-).
	\end{eqnarray*}
\end{proof}
The following lemma is exactly Proposition 4.5 in \cite{Monteiro-Svaiter-2012}.
\begin{lemma}\label{continuity of maximal monotone operator}
	Let $A:\mathbb{R}^s\rightrightarrows\mathbb{R}^s$ be a maximal monotone operator. Then for any $x,\tilde{x} \in \mathbb{R}^s$, we have
	\begin{equation}\label{}
	\|(I+\lambda A)^{-1}(x)-(I+\lambda A)^{-1}(\tilde{x})\| \leq \|x-\tilde{x}\|.
	\end{equation}
	Moreover, if $x_* \in A^{-1}(0)$ then
	\begin{equation}\label{}
	\max\{\|(I+\lambda A)^{-1}(x)-x\|, \|(I+\lambda A)^{-1}(x)-x_*\|  \} \leq \|x-x_*\| .
	\end{equation}
\end{lemma}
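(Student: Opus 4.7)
The plan is to use the defining inclusion of the resolvent together with the monotonicity of $A$; maximal monotonicity is only needed so that $(I+\lambda A)^{-1}$ is a single-valued operator with full domain, which ensures that all the points appearing in the proof exist uniquely. For the first inequality I would set $y=(I+\lambda A)^{-1}(x)$ and $\tilde y=(I+\lambda A)^{-1}(\tilde x)$, so that by definition $x-y\in\lambda A(y)$ and $\tilde x-\tilde y\in\lambda A(\tilde y)$. Applying monotonicity of $A$ to the pairs $(y,(x-y)/\lambda)$ and $(\tilde y,(\tilde x-\tilde y)/\lambda)$ gives
\begin{equation*}
\langle (x-y)-(\tilde x-\tilde y),\, y-\tilde y\rangle \geq 0,
\end{equation*}
which rearranges to $\|y-\tilde y\|^2\leq\langle x-\tilde x,\,y-\tilde y\rangle$. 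Cauchy--Schwarz then yields $\|y-\tilde y\|\leq\|x-\tilde x\|$, which is the desired nonexpansiveness.

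For the second pair of inequalities, I would first note that $x_*\in A^{-1}(0)$ means $0\in A(x_*)$, equivalently $x_*=(I+\lambda A)^{-1}(x_*)$. Substituting $\tilde x=x_*$ (so $\tilde y=x_*$) in the bound just derived gives $\|(I+\lambda A)^{-1}(x)-x_*\|\leq\|x-x_*\|$ immediately.

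The remaining bound $\|(I+\lambda A)^{-1}(x)-x\|\leq\|x-x_*\|$ uses a slightly different monotonicity pairing. With $y=(I+\lambda A)^{-1}(x)$, the inclusions $(x-y)/\lambda\in A(y)$ and $0\in A(x_*)$ give
\begin{equation*}
\langle x-y,\, y-x_*\rangle \geq 0.
\end{equation*}
Writing $y-x_* = (y-x)+(x-x_*)$ and rearranging produces $\|x-y\|^2\leq\langle x-y,\,x-x_*\rangle$, and Cauchy--Schwarz followed by dividing by $\|x-y\|$ (the case $x=y$ being trivial) completes the argument.

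I do not anticipate any real obstacle: these are the standard (firm) nonexpansiveness properties of the resolvent, and the only subtle point is the appeal to maximal monotonicity to guarantee that $(I+\lambda A)^{-1}$ is single-valued with domain $\mathbb{R}^s$ (Minty's theorem), so that $y$ and $\tilde y$ are unambiguously defined and the inclusion manipulations are legitimate. Everything else reduces to one application of monotonicity followed by Cauchy--Schwarz.
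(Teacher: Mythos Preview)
Your argument is correct and is the standard proof of resolvent (firm) nonexpansiveness. The paper itself does not supply a proof of this lemma; it simply states that it is exactly Proposition~4.5 in \cite{Monteiro-Svaiter-2012}, so there is nothing to compare against beyond noting that your self-contained derivation matches the classical one underlying that reference.
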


Now we can bound the residual in terms of the distance between current iterate and an optimal solution.
\begin{lemma}\label{lemma:operator-bound}
	Let $T := \nabla f +\partial h$ and $T_x := \nabla f_x + \partial h$.
	Assume that $x_* \in T^{-1}(0)=(\nabla f + \partial h)^{-1} (0)$ and let $\bar{x},x \in \mathbb{R}^n$ be given. Then,
	\begin{equation} \label{ineq-1}
	\|x-(I+\lambda T_{\bar{x}})^{-1}(x)\| \leq \|x-x_*\| +\frac{\lambda (L_d + M)}{d!}\|\bar{x}-x_*\|^d .
	\end{equation}
	As a consequence, for every $x \in \mathbb{R}^n$, $x_* \in T^{-1}(0)$, and $\lambda>0$, it holds that
	\begin{equation} \label{ineq-2}
	\lambda \|x-(I+\lambda T_{\bar{x}})^{-1}(x)\|^{d-1} \leq \lambda \left( \|x-x_*\|+\frac{\lambda (L_d+M)}{d!} \|x-x_*\|^d \right)^{d-1}.
	\end{equation}
\end{lemma}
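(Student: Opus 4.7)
The plan is to exploit monotonicity of the operator $T_{\bar x}$ together with the key observation that although $x_*$ is a zero of $T$ and not of $T_{\bar x}$, one can identify an explicit element of $T_{\bar x}(x_*)$ whose norm is controlled by Lemma~\ref{Fun-Gap-Bound}. This "approximate zero" will play the role of a reference point against which to apply monotonicity.

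First I would set things up as follows. Since $0 \in T(x_*) = \nabla f(x_*) + \partial h(x_*)$, there exists $-\nabla f(x_*) \in \partial h(x_*)$, and hence
\[
w := \nabla f_{\bar x}(x_*) - \nabla f(x_*) \in \nabla f_{\bar x}(x_*) + \partial h(x_*) = T_{\bar x}(x_*).
\]
By Lemma~\ref{Fun-Gap-Bound} applied at $y = x_*$, this $w$ satisfies $\|w\| \le \frac{L_d+M}{d!}\|\bar x - x_*\|^d$. Next, let $y := (I + \lambda T_{\bar x})^{-1}(x)$, so that by definition of the resolvent $v := (x-y)/\lambda \in T_{\bar x}(y)$, and the target quantity is $\|x-y\| = \lambda\|v\|$.

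The core computation uses the identity $\|x-x_*\|^2 = \|y-x_*\|^2 + 2\lambda\langle v, y-x_*\rangle + \lambda^2\|v\|^2$ obtained from $x - x_* = (y - x_*) + \lambda v$. Monotonicity of $T_{\bar x}$ at the pair $(y,v)$ and $(x_*,w)$ yields $\langle v, y-x_*\rangle \ge \langle w, y-x_*\rangle \ge -\|w\|\|y-x_*\|$. Substituting and completing the square in $\|y-x_*\|$, I obtain
\[
\lambda^2\|v\|^2 \le \|x-x_*\|^2 + \lambda^2\|w\|^2 - (\|y-x_*\| - \lambda\|w\|)^2 \le \|x-x_*\|^2 + \lambda^2\|w\|^2,
\]
so that $\lambda\|v\| \le \sqrt{\|x-x_*\|^2 + \lambda^2\|w\|^2} \le \|x-x_*\| + \lambda\|w\|$. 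Plugging in the bound on $\|w\|$ immediately gives~\eqref{ineq-1}.

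For~\eqref{ineq-2}, the consequence follows by specializing~\eqref{ineq-1} to $\bar x = x$, which makes $\|\bar x - x_*\| = \|x - x_*\|$, then raising both sides to the power $d-1$ and multiplying by $\lambda$. The main technical hurdle is the monotonicity-plus-square argument in the previous paragraph; once the correct "shifted" reference point $w \in T_{\bar x}(x_*)$ is identified via Lemma~\ref{Fun-Gap-Bound}, the rest is algebra. A naive use of the non-expansiveness of the resolvent (Lemma~\ref{continuity of maximal monotone operator}) combined with the triangle inequality yields only the weaker bound $\|x-y\| \le 2\|x-x_*\| + \lambda\|w\|$, which is why the firmer monotonicity estimate is needed to remove the spurious factor of $2$.
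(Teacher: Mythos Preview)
Your proof is correct, but it proceeds differently from the paper's. Both arguments hinge on the same observation, namely that $w := \nabla f_{\bar x}(x_*) - \nabla f(x_*) \in T_{\bar x}(x_*)$ with $\|w\|$ controlled by Lemma~\ref{Fun-Gap-Bound}. From there, however, the paper \emph{shifts the operator}: it sets $A := T_{\bar x} + r$ with $r$ the constant map $r \equiv -w$, so that $x_* \in A^{-1}(0)$, then uses the resolvent identity $(I+\lambda T_{\bar x})^{-1}(x) = (I+\lambda A)^{-1}(x+\lambda r)$ together with the non-expansiveness and zero-anchoring of Lemma~\ref{continuity of maximal monotone operator} to get $\|x-(I+\lambda T_{\bar x})^{-1}(x)\| \le \|x-x_*\| + \lambda\|w\|$ via the triangle inequality. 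You instead work directly with monotonicity of $T_{\bar x}$ at the pair $(y,v)$, $(x_*,w)$ and a complete-the-square computation. Your route is arguably more self-contained (it does not need the resolvent-shift identity or Lemma~\ref{continuity of maximal monotone operator}), while the paper's route is cleaner once those tools are available. Note, incidentally, that your closing remark about the resolvent approach yielding only $2\|x-x_*\|+\lambda\|w\|$ is too pessimistic: the paper's shift trick shows that a \emph{non-naive} use of Lemma~\ref{continuity of maximal monotone operator} does recover the sharp constant.
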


\begin{proof}
	Let $r$ be a constant mapping such that $r(x) =\nabla f(x_*) - \nabla f_{\bar{x}}(x_*)$ for any $x \in \R^n$. Then, construct $A:=T_{\bar{x}}+r$, where $A$ is also a maximal monotone operator. By Lemma \ref{continuity of maximal monotone operator},
	\begin{equation}\label{inverse-eye+A}
	\|(I+\lambda A)^{-1}(x)-x\| \leq \|x-x_*\|.
	\end{equation}
	Let $y = x + \lambda (\nabla f(x_*) - \nabla f_{\bar{x}}(x_*))$ and $z = \left( I+\lambda r +\lambda T_{\bar{x}} \right)^{-1} (y)$. We have
	$$
	x + \lambda (\nabla f(x_*) - \nabla f_{\bar{x}}(x_*))= y = \left( I+\lambda r +\lambda T_{\bar{x}} \right) (z) = z +  \lambda (\nabla f(x_*) - \nabla f_{\bar{x}}(x_*)) + \lambda T_{\bar{x}}(z).
	$$
	Canceling $\lambda (\nabla f(x_*) - \nabla f_{\bar{x}}(x_*))$ on both sides leads to
	\begin{equation*}
	(I+\lambda T_{\bar{x}})^{-1}(x) = z = \left( I+\lambda r +\lambda T_{\bar{x}} \right)^{-1} (y)=	
	(I+\lambda A)^{-1} (y).
	\end{equation*}
	Combining the above inequality with \eqref{inverse-eye+A} and Lemma \ref{Fun-Gap-Bound} we have
	\begin{equation*}
	\begin{split}
	\|x-(I+\lambda T_{\bar{x}})^{-1}(x)\| & =\|x-(I+\lambda A)^{-1} (y)\| \\
	& \leq \|x-(I+\lambda A)^{-1} (x)\|+\|(I+\lambda A)^{-1} (x)-(I+\lambda A)^{-1} (y)\| \\
	& \leq \|x-x_*\|+\lambda \|\nabla f(x_*) - \nabla f_{\bar{x}}(x_*)\| \\
	& \leq \|x-x_*\|+\frac{\lambda (L_d+M)}{d!}\|\bar{x}-x_*\|^d,
	\end{split}
	\end{equation*}
	which proves \eqref{ineq-1}, and \eqref{ineq-2} follows from \eqref{ineq-1} straightforwardly.
\end{proof}

\begin{lemma}\label{lemma:distance-x-y}
	Suppose $x_+ = x_{\beta_+}$ and $x_- = x_{\beta_-}$ are generated from Algorithm \ref{alg:bisection}, and $\bar y_+$, $\bar y_-$ are defined in \eqref{Def-bar-y}.
	When the number of iterations $j$ in Algorithm \ref{alg:bisection} satisfies $j \ge \log_2 (2/(1-\bar{\beta}))$ with $\bar{\beta} = \beta (\bar \lambda)$ and $\bar \lambda$ defined in \eqref{Def-lambda-bar}, we have
	\begin{equation*}
	\|x_+ - x_-\| \le \Big(2+\frac{2}{\sqrt{1-\sigma^2}}\Big)D (\beta_+ - \beta_-),\quad \|x_+ - \bar y_+\|\le \max\left\{ \Theta(\bar \epsilon^{-1}),\, \Theta\left(\bar \rho^{-\frac{d+1}{d}}\right) \right\}
	\end{equation*}
	and
	\begin{equation*}
	\|x_- - \bar y_-\| \le \max\left\{ \Theta\left( \bar \epsilon^{-\frac{d-1}{d+1}} \right),\, \Theta\left(\bar \rho^{-\frac{d-1}{d}}\right) \right\}.
	\end{equation*}
\end{lemma}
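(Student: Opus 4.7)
The plan is to handle the three bounds in turn, with the first one being essentially algebraic and the last two following from a single application of Lemma \ref{lemma:operator-bound} together with the bounds on $\lambda_\pm$ already established in Lemma \ref{lemma:upper-bound-lambda} and \eqref{Def-lambda-bar}.

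First I would write $x_\beta = (1-\beta)y_k + \beta x_k$ from \eqref{formula:x-beta} and observe that
\[
x_+ - x_- = (\beta_+ - \beta_-)(x_k - y_k),
\]
so $\|x_+ - x_-\| = (\beta_+ - \beta_-)\|x_k - y_k\|$. By \eqref{bound-x-k-star} of Lemma \ref{convergence speed_general} we have $\|x_k - x_*\| \le D$ and $\|y_k - x_*\| \le (1 + 2/\sqrt{1-\sigma^2})D$, so by the triangle inequality $\|x_k - y_k\| \le (2 + 2/\sqrt{1-\sigma^2})D$, which yields the first claim directly.

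For the remaining two bounds, with $T_{x_\pm} := \nabla f_{x_\pm} + \partial h$ the definitions in \eqref{Def-bar-y} read $\bar y_\pm = (I + \lambda_\pm T_{x_\pm})^{-1}(x_\pm)$. The natural move is to invoke Lemma \ref{lemma:operator-bound} with $x = \bar x = x_\pm$ and $\lambda = \lambda_\pm$, which gives
\[
\|x_\pm - \bar y_\pm\| \le \|x_\pm - x_*\| + \frac{\lambda_\pm (L_d+M)}{d!}\|x_\pm - x_*\|^d.
\]
Since $x_\pm$ is a convex combination of $x_k$ and $y_k$, $\|x_\pm - x_*\|$ is bounded by the same constant multiple of $D$ as above, so the right-hand side is of order $O(1) + O(\lambda_\pm)$.

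It then remains to plug in the bounds on $\lambda_\pm$. For $\bar y_+$ I would use Lemma \ref{lemma:upper-bound-lambda}, which under the hypothesis $j \ge \log_2(2/(1-\bar\beta))$ gives $\lambda_+ \le \max\{\Theta(\bar\epsilon^{-1}),\Theta(\bar\rho^{-(d+1)/d})\}$, yielding the claimed bound on $\|x_+ - \bar y_+\|$. For $\bar y_-$ I would use the tighter bound $\lambda_- \le \bar\lambda$, which by \eqref{Def-lambda-bar} is $\max\{\Theta(\bar\epsilon^{-(d-1)/(d+1)}),\Theta(\bar\rho^{-(d-1)/d})\}$; this follows from \eqref{beta-minus-bound} in the proof of Lemma \ref{lemma:upper-bound-lambda} and the monotonicity of $\lambda(\beta)$. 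Plugging this in yields the third bound. There is no real obstacle here beyond bookkeeping: the only point that requires a moment of care is verifying that the contribution $\frac{\lambda_\pm (L_d+M)}{d!}\|x_\pm - x_*\|^d$ does not dominate in a worse way than the stated exponents, which it does not because $\|x_\pm - x_*\|$ is a constant (independent of $\bar\epsilon,\bar\rho$), so the exponent in $\bar\epsilon^{-1}$ and $\bar\rho^{-1}$ is fully determined by the linear factor $\lambda_\pm$.
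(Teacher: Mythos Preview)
Your proposal is correct and follows essentially the same approach as the paper: bound $\|x_+-x_-\|$ via the affine formula $x_\beta=(1-\beta)y_k+\beta x_k$ and the a priori bounds on $\|x_k-x_*\|$, $\|y_k-x_*\|$ from Lemma~\ref{convergence speed_general}, then apply Lemma~\ref{lemma:operator-bound} with $x=\bar x=x_\pm$ and insert the bounds $\lambda_+\le\max\{\Theta(\bar\epsilon^{-1}),\Theta(\bar\rho^{-(d+1)/d})\}$ from Lemma~\ref{lemma:upper-bound-lambda} and $\lambda_-\le\bar\lambda$ from \eqref{beta-minus-bound}. The only cosmetic difference is that the paper's proof also invokes \eqref{inexact-exact-gap} to carry an extra $(1+\hat\sigma)$ factor, but this is immaterial for the $\Theta$-bounds and your direct application of Lemma~\ref{lemma:operator-bound} to $\bar y_\pm$ is in fact closer to what the lemma statement asks for.
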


\begin{proof}
	Let $x_*$ be the projection of $x_0$ onto the optimal solution set $X_*$. According to Lemma \ref{convergence speed_general}, it holds that
	\begin{equation*}
	\|x_k - x_*\| \leq D \quad \mbox{and} \quad \|y_k - x_*\| \leq \Big( \frac{2}{\sqrt{1-\sigma^2}}+1\Big)D.
	\end{equation*}
	By \eqref{formula:x-beta}, we have $x_+ = (1 - \beta_+) y_k + \beta_+ x_k $ and $x_- = (1 - \beta_-) y_k + \beta_- x_k $. Therefore,
	\begin{equation}\label{x-xstar-norm-bound}
	\|x_+ - x_*\| \leq D, \quad \mbox{and} \quad \|x_- - x_*\| \leq \Big( \frac{2}{\sqrt{1-\sigma^2}}+1\Big)D,
	\end{equation}
	and
	\begin{eqnarray*}
		\| x_+ - x_- \| &=& \| (\beta_+ - \beta_-) (x_k - y_k)\| \le  (\|x_k - x_*\| + \| y_k - x_*\|)(\beta_+ - \beta_-) \\
		& \le& \Big( 2 + \frac{2}{\sqrt{1-\sigma^2}}\Big)D(\beta_+ - \beta_-).
	\end{eqnarray*}
	Recall that in the proof of Theorem \ref{Thm:complexity-line-search}, we showed that when $j \ge \log_2 (2/(1-\bar{\beta}))$, $\beta_- \le \bar \beta$ and
	inequality \eqref{bound-lambda-plus} holds.
	Applying Lemma \ref{lemma:operator-bound} with $x=\bar{x}=x_+$, inquality \eqref{bound-lambda-plus} and \eqref{inexact-exact-gap}, we have
	\begin{equation*}
	\|x_+ - y_+\| \leq (1+\hat \sigma) \|x_+ - x_*\| +  (1+\hat \sigma) \frac{\lambda_+ L_d }{d!}\|x_+ - x_*\|^{d+1}
	\leq \max\left\{ \Theta(\bar \epsilon^{-1}),\, \Theta\left( \bar \rho^{-\frac{d+1}{d}}\right) \right\}.
	\end{equation*}
	Since $\lambda(\beta)$ is monotonically increasing in $\beta$, $\beta_- \le \bar \beta$ amounts to
	$$
	\lambda_- \le \bar \lambda = \max\left\{ \Theta\left( \bar \epsilon^{-\frac{d-1}{d+1}} \right),\, \Theta\left( \bar \rho^{-\frac{d-1}{d}}\right) \right\} .
	$$
	Finally, applying Lemma \ref{lemma:operator-bound} again with $x=\bar{x}=x_-$ and using \eqref{inexact-exact-gap} yields
	\begin{equation*}
	\|x_- - y_-\| \leq  (1+\hat \sigma)  \|x_- - x_*\| +   (1+\hat \sigma)  \frac{\lambda_- L_d }{d!}\|x_- - x_*\|^{d+1}
	\leq \max\left\{ \Theta\left( \bar \epsilon^{-\frac{d-1}{d+1}} \right),\, \Theta\left( \bar \rho^{-\frac{d-1}{d}}\right) \right\}.
	\end{equation*}
	
\end{proof}

\begin{lemma}\label{lemma4}
	Suppose that $x_+ = x_{\beta_+}$ and $x_- = x_{\beta_-}$ are generated by Algorithm \ref{alg:bisection}, and $\bar y_+$, $\bar y_-$ are defined in \eqref{Def-bar-y}.
	If the number of iterations $j$ in Algorithm \ref{alg:bisection} satisfies $j \ge \log_2 (2/(1-\bar{\beta}))$ with $\bar{\beta} = \beta (\bar \lambda)$ and $\bar \lambda$ defined in \eqref{Def-lambda-bar}, then we have
	\begin{equation*}
	\|\nabla f_{x_+}(\bar y_+)-\nabla f_{x_-}(\bar y_+)\| \leq \max\left\{ \Theta\left( \bar \epsilon^{-d+1}\right), \, \Theta\left( \bar \rho^{-\frac{(d-1)(d+1)}{d}} \right) \right\} (\beta_+ - \beta_-).
	\end{equation*}
\end{lemma}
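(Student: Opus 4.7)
The plan is to control $\nabla f_{x_+}(\bar y_+) - \nabla f_{x_-}(\bar y_+)$ by a direct term-by-term expansion. Differentiating the definition \eqref{g_x} of $f_x$ gives
\[
\nabla f_x(y) = \sum_{l=1}^{d} \frac{1}{(l-1)!}\nabla^l f(x)[y-x]^{l-1} + \frac{M}{d!}\|y-x\|^{d-1}(y-x),
\]
so with the shorthand $y:=\bar y_+$, $z_+ := y-x_+$, $z_-:= y-x_-$, and $\Delta:= x_+-x_-$, the target difference decomposes into $d$ polynomial differences plus one regularization difference. For each $l\in\{1,\dots,d\}$ I would insert the intermediate expression $\nabla^l f(x_-)[z_+]^{l-1}$ and write
\[
\nabla^l f(x_+)[z_+]^{l-1} - \nabla^l f(x_-)[z_-]^{l-1} = \bigl(\nabla^l f(x_+) - \nabla^l f(x_-)\bigr)[z_+]^{l-1} + \nabla^l f(x_-)\bigl([z_+]^{l-1} - [z_-]^{l-1}\bigr).
\]

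The first piece is bounded by Lipschitz continuity of $\nabla^l f$ together with \eqref{tensor-norm-bound}: for $l=d$ the assumption \eqref{Lipschitz-continuous} yields $L_d\|\Delta\|\,\|z_+\|^{d-1}$ directly, while for $l<d$ the fundamental theorem of calculus gives $\|\nabla^l f(x_+)-\nabla^l f(x_-)\|\leq B_{l+1}\|\Delta\|$, where $B_{l+1}$ is a uniform upper bound on $\|\nabla^{l+1}f\|$ on the ball of radius $D_1 := \bigl(2+2/\sqrt{1-\sigma^2}\bigr)D$ around $x_*$. All iterates $x_\pm$ and $\bar y_\pm$ lie in that ball by Lemma~\ref{convergence speed_general} and Lemma~\ref{continuity of maximal monotone operator}, and the constants $B_l$ are built recursively downward from $B_d$ (which is controlled by $\|\nabla^d f(x_*)\|$ and $L_d D_1$) through iterated mean-value estimates. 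The second piece is treated by the telescoping identity
\[
[z_+]^{l-1} - [z_-]^{l-1} = \sum_{j=0}^{l-2} \bigl[\underbrace{z_+,\dots,z_+}_{j\text{ copies}},\,z_+-z_-,\,\underbrace{z_-,\dots,z_-}_{l-2-j\text{ copies}}\bigr],
\]
which, combined with \eqref{tensor-norm-bound}, yields a bound of $\frac{1}{(l-1)!}(l-1)B_l \max(\|z_+\|,\|z_-\|)^{l-2}\|\Delta\|$. The regularization term is handled by observing that the map $z\mapsto \|z\|^{d-1}z$ has Jacobian of operator norm at most $d\|z\|^{d-1}$, giving at most $\frac{M}{(d-1)!}\max(\|z_+\|,\|z_-\|)^{d-1}\|\Delta\|$.

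Summing all contributions, the overall estimate is a constant (polynomial in $L_d,M,B_1,\dots,B_d,D_1$) times $\max(\|z_+\|,\|z_-\|)^{d-1}\|\Delta\|$. Substituting the estimates from Lemma~\ref{lemma:distance-x-y} -- namely $\|\Delta\|=\|x_+ - x_-\|\leq O(\beta_+-\beta_-)$ and $\max(\|z_+\|,\|z_-\|)\leq \|\bar y_+-x_+\|+\|x_+-x_-\|\leq\max\{\Theta(\bar\epsilon^{-1}),\Theta(\bar\rho^{-(d+1)/d})\}$ -- and raising the latter to the $(d-1)$ power then produces exactly the claimed bound $\max\{\Theta(\bar\epsilon^{-(d-1)}),\Theta(\bar\rho^{-(d-1)(d+1)/d})\}(\beta_+-\beta_-)$. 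The main obstacle is the bookkeeping for the Lipschitz bounds on $\nabla^l f$ for $l<d$, which are not provided by the oracle model directly but must be derived by chaining the Lipschitz hypothesis on $\nabla^d f$ downward through all lower orders on the bounded $D_1$-ball containing the iterates, while keeping the resulting constants tractable so as not to destroy the desired asymptotic form of the bound.
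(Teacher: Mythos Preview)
Your approach matches the paper's: term-by-term decomposition of the Taylor expansion plus the regularization term, a backward recursion from $l=d$ to bound $\|\nabla^l f\|$ and $\|\nabla^l f(x_+)-\nabla^l f(x_-)\|$ on the region containing $x_\pm$, telescoping for the multilinear pieces, and substitution of the size estimates from Lemma~\ref{lemma:distance-x-y}. One minor slip: Lemma~\ref{continuity of maximal monotone operator} does not place $\bar y_\pm$ in the $D_1$-ball (that lemma requires $x_*\in T_{x_\pm}^{-1}(0)$, which fails since $\nabla f_{x_\pm}(x_*)\neq\nabla f(x_*)$ in general), but you never actually use that claim --- the tensors $\nabla^l f$ are evaluated only at $x_\pm$, and $\|z_\pm\|$ is controlled through Lemma~\ref{lemma:distance-x-y} rather than through ball membership.
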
	
\begin{proof}
	According to the definition of function $f_{x}(\cdot)$ in \eqref{g_x}, it holds that
	\begin{eqnarray}
	& &\|\nabla f_{x_-}(\bar y_+)- \nabla f_{x_+}(\bar y_+)\|
	\nonumber\\
	&=&\Bigg\| \sum_{\ell=1}^{d} \frac{1}{(\ell-1)!}\left(\nabla^\ell f(x_+)[\bar y_+ - x_+]^{\ell-1} - \nabla^\ell f(x_-)[\bar y_+ - x_-]^{\ell-1}  \right)
	\nonumber\\
	& &  \ + \frac{M}{d!} \left( \|\bar y_+ - x_-\|^{d-1}(y_+ - x_-) - \|\bar y_+ - x_+\|^{d-1}(\bar y_+ - x_+) \right)\Bigg\|
	\nonumber \\
	&\le&  \sum_{\ell=1}^{d} \frac{1}{(\ell-1)!}\left\|\nabla^\ell f(x_+)[\bar y_+ - x_+]^{\ell-1} - \nabla^\ell f(x_-)[\bar y_+ - x_-]^{\ell-1}  \right\| \label{gradient-bound} \\
	& & \ +  \frac{M}{d!} \left\| \|\bar y_+ - x_-\|^{d-1}(y_+ - x_-) - \|\bar y_+ - x_+\|^{d-1}(\bar y_+ - x_+) \right\| . \label{power-bound}
	\end{eqnarray}
	Note that \eqref{power-bound} can be further bounded as follows:
	\begin{eqnarray}
	& & \frac{M}{d!} \left\|  \|\bar y_+ - x_-\|^{d-1}(\bar y_+ - x_-) - \|\bar y_+ - x_+\|^{d-1}(\bar y_+ - x_+) \right\|
	\nonumber\\
	&=& \frac{M}{d!}  \left\|  (\|\bar y_+ - x_-\|^{d-1}-\|\bar y_+ - x_+\|^{d-1})(\bar y_+ - x_-)+\|\bar y_+ - x_+\|^{d-1}(x_+ - x_-) \right\|
	\nonumber\\
	&\le&  \frac{M}{d!} \Big( (d-1) \Big| \|\bar y_+ - x_-\| - \|\bar y_+ - x_+\| \Big| \max\left\{ \|\bar y_+ - x_-\|, \|\bar y_+ - x_+\| \right\}^{d-2} \|\bar y_+ - x_-\|
	\nonumber\\
	& &  \ \ \ \ \ +\|\bar y_+ - x_+\|^{d-1}\|x_+ - x_-\| \Big)
	\nonumber\\
	&\le& \frac{M}{d!} \left(  (d-1)\max\left\{ \|\bar y_+ - x_-\|, \|\bar y_+ - x_+\| \right\}^{d-2} \|\bar y_+ - x_-\|+
	\|\bar y_+ - x_+\|^{d-1} \right) \|x_+ - x_-\|
	\nonumber\\
	&\le& \frac{M}{d!} \left(  d \big( \|\bar y_+ - x_+\|+\|x_+ - x_-\| \big)^{d-1} \right) \Big(2+\frac{2}{\sqrt{1-\sigma^2}}\Big)D (\beta_+ - \beta_-)
	\nonumber\\
	&\le& \max \left\{ \Theta\big[(\bar \epsilon^{-1})^{d-1}\big] , \Theta\big[(\bar \rho^{-1})^{\frac{(d+1)(d-1)}{d}}\big] \right\}(\beta_+ - \beta_-)
	\label{power-bound2}
	\end{eqnarray}
	where the first inequality is due to \eqref{two-power-minus}, and the second last inequality is from Lemma  \ref{lemma:distance-x-y}, and that $\|\bar y_+ - x_-\| \le  \|\bar y_+ - x_+\|+\|x_+ - x_-\|$.
	
	It remains to bound \eqref{gradient-bound}. We first show by induction that for $1 \le \ell \le d$ and any convex combination of $x_-, x_+$ and $x_*$ denoted by $z$,
	\begin{eqnarray}
	\| \nabla^\ell f(z) \| &\le& \Theta(1) , \label{induction-1} \\
	\| \nabla^\ell f(x_+) - \nabla^\ell f(x_-)\| &\le& \Theta(1) (\beta_+ - \beta_-) . \label{induction-2}
	\end{eqnarray}
	Our induction works backwardly starting from the base case: $\ell = d$. Recall that $z$ is a convex combination of $x_-, x_+$ and $x_*$. By \eqref{x-xstar-norm-bound} we have
	$$\|z-x_*\| \le  \max\{\|x_+ - x_*\| , \|x_- - x_*\|\} \le \Theta(1).$$
	Therefore,
	\begin{eqnarray*}
		\|\nabla^d f(z)\| &\le& \|\nabla^d f(x_*)\| + \|\nabla^d f(x_*) - \nabla^d f(z)\| \\
		&\le& \|\nabla^d f(x_*)\| + L_d \|z-x_*\| \\
		&\le& \Theta(1).
	\end{eqnarray*}
	Moreover, by invoking \eqref{Lipschitz-continuous} and Lemma \ref{lemma:distance-x-y}, we have
	\begin{eqnarray*}
		\|\nabla^d f(x_+)-\nabla^d f(x_-)\| &\le& L_d \|x_+ - x_-\| \\
		&\le& L_d \Big(2+\frac{2}{\sqrt{1-\sigma^2}}\Big)D (\beta_+ - \beta_-)
		\le \Theta(1) (\beta_+ - \beta_-).
	\end{eqnarray*}
	Now suppose that the conclusion holds for some $\ell +1$. Consider
	$$z = t_1x_- + t_2x_+ + (1- t_1 - t_2)x_*,\; \forall \; 0 \le t_1, t_2\le 1.$$
	Denote $D_1 := \Big(2+\frac{2}{\sqrt{1-\sigma^2}}\Big)D$.
	By letting $x_t = \frac{t_1}{t_1 + t_2}x_- + \frac{t_2}{t_1 + t_2}x_+$ and \eqref{x-xstar-norm-bound}, we have $\|x_t - x_* \| \le \| x_- - x_*\| + \| x_+ - x_*\| \le  \Big(2+\frac{2}{\sqrt{1-\sigma^2}}\Big)D= D_1$.
	Consequently,
	\begin{eqnarray*}
		\| \nabla^\ell f(z) -\nabla^\ell f(x_*) \|
		&=& \| \nabla^\ell f(x_* + (t_1 + t_2)(x_t-x_*)) -\nabla^\ell f(x_*) \| \\
		&=& \left\| \int_0^{t_1 + t_2} \nabla^{\ell +1} f\left(x_* + u(x_t-x_*)\right) [x_t-x_*] du \right\| \\
		&\le&  \int_0^{t_1 + t_2} \| \nabla^{\ell +1} f\left(x_* + u(x_t-x_*)\right) \| \|x_t-x_*\| du \\
		&\le&(t_1 + t_2)D_1\Theta(1),
	\end{eqnarray*}
	where the second last inequality is due to \eqref{tensor-norm-bound} and the last inequality follows from the induction hypothesis on \eqref{induction-1}.
	Then, it follows that
	$$
	\| \nabla^{\ell} f(z) \| \le \|\nabla^\ell f(x_*) \| + (t_1 + t_2)D_1  \Theta(1) \le \Theta(1).
	$$
	Now by induction on \eqref{induction-1}, applying Lemma \ref{lemma:distance-x-y} and using \eqref{tensor-norm-bound}
	we have
	\begin{eqnarray*}
		\| \nabla^\ell f(x_+) - \nabla^\ell f(x_-) \|
		&=&\Big\| \int_{0}^1 \nabla^{\ell+1} f(x_- + t(x_+ - x_-))[x_+ - x_-]dt \Big\| \\
		&\le& \Theta(1) D_1 (\beta_+ - \beta_-) \\
		&\le&  \Theta(1) (\beta_+ - \beta_-).
	\end{eqnarray*}
	Therefore, by induction it follows that \eqref{induction-1} and \eqref{induction-2} hold for any $1 \le \ell \le d$.
	
	Now we come back to bound \eqref{gradient-bound}.
	For $2\leq \ell \leq d$,
	\begin{eqnarray}
	&& \left\|\nabla^\ell f(x_+)[\bar y_+ - x_+]^{\ell-1} - \nabla^\ell f(x_-)[\bar y_+ - x_-]^{\ell-1} \right\| \nonumber \\
	& \leq &\left\|\nabla^\ell f(x_+)[\bar y_+ - x_+]^{\ell-1} - \nabla^\ell f(x_+)[\bar y_+ - x_-]^{\ell-1} \right\| \nonumber \\
	& & + \left\|\nabla^\ell f(x_+)[\bar y_+ - x_-]^{\ell-1} - D^\ell f(x_-)[\bar y_+ - x_-]^{\ell-1} \right\| . \label{seperate into two terms}
	\end{eqnarray}
	Applying Lemma \ref{lemma:distance-x-y} and \eqref{induction-1}, the first term on the right hand side of \eqref{seperate into two terms} can be further upper bounded as follows:
	\begin{eqnarray}
	& &\left\|\nabla^\ell f(x_+)[\bar y_+ - x_+]^{\ell-1} - \nabla^\ell f(x_+)[\bar y_+ - x_-]^{\ell-1} \right\|
	\nonumber\\
	&=& \left\| \sum_{j=1}^{\ell-1} \nabla^\ell f(x_+) \left[ [\bar y_+ - x_+]^{j-1} [x_- - x_+] [\bar y_+ - x_-]^{\ell-j-1}  \right] \right\|
	\nonumber\\
	&\leq& \sum_{j=1}^{\ell-1} \|\nabla^\ell f(x_+)\| \|\bar y_+ - x_+\|^{j-1} \|\bar y_+ - x_-\|^{\ell-j-1} \|x_+ - x_-\|
	\nonumber\\
	&\le& \sum_{j=1}^{\ell-1}\Theta(1)\|\bar y_+ - x_+\|^{j-1} \Big( \|x_+ - \bar y_+\|+ \|x_- - x_+\| \Big)^{\ell-j -1} \|x_+ - x_-\|
	\nonumber\\
	&\le&(\ell-1) \Theta(1)\Big( \|x_+ - x_-\|+ \|x_+ - \bar y_+\| \Big)^{\ell-2} D_1 (\beta_+ - \beta_-)
	\nonumber\\
	&\le& \max \left\{ \Theta\big[(\bar \epsilon^{-1})^{\ell-2}\big] , \Theta\big[(\bar \rho^{-1})^{\frac{(d+1)(\ell-2)}{d}}\big] \right\}(\beta_+ - \beta_-) .
	\label{high-order-gradient-bound1}
	\end{eqnarray}
	Moreover, applying Lemma \ref{lemma:distance-x-y}, \eqref{induction-1} and \eqref{tensor-norm-bound} to the second term on the right hand side of \eqref{seperate into two terms} gives that
	\begin{eqnarray}
	& &\left\|\nabla^\ell f(x_+)[\bar y_+ - x_-]^{\ell-1} - \nabla^\ell f(x_-)[\bar y_+ - x_-]^{\ell-1}\right\|
	\nonumber\\
	&\le& \|\nabla^\ell f(x_+) - \nabla^\ell f(x_-)\| \|\bar y_+ - x_-\|^{\ell-1}
	\nonumber\\
	&\le&\Theta(1)(\beta_+ - \beta_-) \Big( \|x_+-x_-\|+ \|x_+ - \bar y_+\| \Big)^{\ell-1}
	\nonumber\\
	&\le& \max \left\{ \Theta\big[(\bar \epsilon^{-1})^{\ell-1}\big] , \Theta\big[(\bar \rho^{-1})^{\frac{(d+1)(\ell-1)}{d}}\big] \right\}(\beta_+ - \beta_-) . \label{high-order-gradient-bound2}
	\end{eqnarray}
	Putting \eqref{seperate into two terms}, \eqref{high-order-gradient-bound1} and \eqref{high-order-gradient-bound2} together yields
	\begin{eqnarray*}
		&& \left\|\nabla^\ell f(x_+)[y_+ - x_+]^{\ell-1} - \nabla^\ell f(x_-)[y_+ - x_-]^{\ell-1} \right\| \\
		&\leq&
		\max \left\{ \Theta\big[(\bar \epsilon^{-1})^{\ell-1}\big] , \Theta\big[(\bar \rho^{-1})^{\frac{(d+1)(\ell-1)}{d}}\big] \right\} (\beta_+ - \beta_-)
	\end{eqnarray*}
	for $\ell = 2,..., d$. When $\ell =1$, \eqref{induction-2} guarantees that
	$$\| \nabla f(x_+) - \nabla f(x_-)\| \le \Theta(1) (\beta_+ - \beta_-).$$
	Therefore, the quantity in \eqref{gradient-bound} can be bounded as
	\begin{eqnarray}
	& & \sum_{\ell=1}^{d}  \frac{1}{(\ell-1)!}\left\|\nabla^\ell f(x_+)[\bar y_+ - x_+]^{\ell-1} - \nabla^\ell f(x_-)[\bar y_+ - x_-]^{\ell-1}  \right\| \nonumber \\
	&\le&      \max \left\{ \Theta\big[(\bar \epsilon^{-1})^{d-1}\big] , \Theta\big[(\bar \rho^{-1})^{\frac{(d+1)(d-1)}{d}}\big] \right\} (\beta_+ - \beta_-). \label{gradient-bound2}
	\end{eqnarray}
	Finally, replacing \eqref{power-bound} and \eqref{gradient-bound} with \eqref{power-bound2} and \eqref{gradient-bound2} respectively leads to the desired conclusion.	
\end{proof}

\end{document}